\begin{document}

\newenvironment {proof}{{\noindent\bf Proof.}}{\hfill $\Box$ \medskip}

\newtheorem{theorem}{Theorem}[section]
\newtheorem{lemma}[theorem]{Lemma}
\newtheorem{condition}[theorem]{Condition}
\newtheorem{definition}[theorem]{Definition}
\newtheorem{proposition}[theorem]{Proposition}
\newtheorem{remark}[theorem]{Remark} 
\newtheorem{hypothesis}[theorem]{Hypothesis}
\newtheorem{corollary}[theorem]{Corollary}
\newtheorem{example}[theorem]{Example}

\renewcommand {\theequation}{\arabic{section}.\arabic{equation}}
\def \non{{\nonumber}}
\newcounter{fig}
\newcommand{\ddt}{\frac{d}{dt}}
\newcommand{\lskip}{\phantom{.}}

\newcommand{\f}{\frac}

\newcommand{\Rt}{\longrightarrow}
\newcommand{\rt}{\rightarrow}
\newcommand{\RT}{\Rightarrow}
\newcommand{\Lt}{\longleftarrow}
\newcommand{\lt}{\leftarrow}
\newcommand{\LT}{\Leftarrow}
\newcommand{\LRT}{\Leftrightarrow}

\newcommand{\np}{\noindent}
\newcommand{\T}{\text}
\newcommand{\hs}{\hspace}
\newcommand{\vs}{\vspace}

\newcommand{\w}{\omega}
\newcommand{\W}{\Omega}
\newcommand{\g}{\gamma}
\newcommand{\G}{\Gamma}
\newcommand{\e}{\epsilon}
\newcommand{\oo}{\infty}
\newcommand{\s}{\sigma}
\def\SC{\mathcal}
\def\ub{\underbar}
\def \triple|{|\! | \! |}

\def\le{\left}
\def\ri{\right}

\def\l{\lambda}
\def\L{\Lambda}
\def\ot{\otimes}
\def\ott{\hat{\otimes}_\pi}
\def\oto{\hat{\otimes}_{op}}
\def\oth{\hat{\ot}_{HS}}
\def\<{\langle}
\def\>{\rangle}
\def\~{\tilde}
\newcommand{\EE}{\ensuremath{\mathsf{E}}}
\newcommand{\PP}{\ensuremath{\mathsf{P}}}

\def\y{\mathbf y^*}
\def\N{\mathbb N}
\def\Z{\mathbb Z}
\def\R{\mathbb R}
\def\C{\mathbb C}
\def\H{\mathbb H}
\def\LL{\mathbb L}
\def\K{\mathbb K}
\def\X{\mathbb X}
\def\Y{\mathbb Y}
\def\U{\mathbb U}
\def\E{\mathbb E}


\title{\Large\ {\bf Large deviation principle for stochastic integrals and stochastic differential equations driven by infinite-dimensional semimartingales  }}

\author{Arnab Ganguly\thanks{Research supported in part by NSF grants DMS 05-53687, 08-05793 and  Louisiana Board of Regents through the Board of Regents Support Fund (contract number: LEQSF(2016-19)-RD-A-04).}\\
Department of Mathematics\\
Louisiana State University\\
aganguly@lsu.edu} 
\date{}
\maketitle
\begin{abstract}
\noindent
The paper concerns itself with establishing large deviation principles for a sequence of stochastic integrals and stochastic differential equations driven by general semimartingales in  infinite-dimensional settings. The class of semimartingales considered is broad enough to cover Banach space-valued semimartingales and the martingale random measures. Simple usable  expressions for the associated rate functions are given in this abstract setup.  As illustrated through several concrete examples, the results presented here provide a new systematic approach to the study of large deviation principles for a sequence of Markov processes. \\

\noindent
{\bf MSC 2010 subject classifications:}   60F10, 60G51, 60H05,  60H10, 60J25, 60J60. \\

\noindent
{\bf Keywords:}  large deviations, stochastic integration, stochastic differential equations, exponential tightness, Markov processes, infinite dimensional semimartingales, Banach space-valued semimartingales
\end{abstract}

\setcounter{equation}{0}
\section{Introduction}
The theory of large deviations is roughly the study of the exponential decay of the probability measures of certain kinds of extreme or tail events (see \cite{DZ98} for  some general principles of this theory).  More precisely, as formulated by Varadhan \cite{Var66}, the large deviation principle for a sequence of probability measures  $\left\{\mu_n\right\}$ is defined as follows:
\begin{definition}
Let $U$ be a Polish space and $\left\{\mu_n\right\}$ a sequence of probability measures on $(U, \SC{U})$, where $\SC{U}$ is the Borel $\sigma$-algebra on $U$. $\left\{\mu_n\right\}$ satisfies a {\bf large deviation principle} (LDP) with rate function $I: U \rt [0,\infty)$, if 
\begin{align*}
 \liminf_{n\rt \infty} \f{1}{n} \log \mu_n (O) \geq -I(O), \ \ \mbox{for every open set } \ O \in \SC{U},\\
\end{align*}
and
\begin{align*}
 \limsup_{n\rt \infty} \f{1}{n} \log \mu_n (C)\leq -I(C), \ \ \mbox{for every closed set } \ C \in \SC{U}.\\
\end{align*}
Here for a set $A$, $I(A) = \inf_{x\in A}I(x)$.
 \end{definition}
The rate function $I$ is generally taken to be lower semicontinuous, and under that condition it is unique. Much of the earlier work on the large deviation principle (Donsker and Varadhan \cite{DV75, DV76}) was based on change of measure techniques, where a new measure is identified under which the events of interest have high probability, and then the probability of that event under the original probability measure is calculated using the Radon-Nikodym derivative.

Starting with the pioneering works of Fredlin and Wentzell \cite{FW98}, there has been a vast body of work on large deviation asymptotics for various small noise stochastic differential equations (SDEs) driven by  Brownian motions in both the finite-dimensional and infinite-dimensional settings, and a partial list of references is \cite{Ra80, MiNuSa92, MeMi98, Sun06, BDV08, YaHo08, SunSri09, Bes09, DuMi09, BDM10, Rock10, Liu10}. Historically, there are comparatively less amount of results  available on large deviations of SDEs with jumps, particularly, for infinite-dimensional models. However, in the past decade quite a few papers have come up where the authors proved large deviation results for  SDEs driven by Poisson random measures (for example, see \cite{LipPuha92, RoZha07, BCD13}). 

 Traditional route for proving LDP involves discretizing the given SDE and proving an LDP first for the simplified discretized system usually through a contraction principle. One then shows  that the original system is {\em exponentially close} to the discretized system through some technical probability estimates and argues that a LDP holds for the original model. Although this program of proving LDP works for many finite-dimensional models, it is considerably difficult to carry it out for infinite-dimensional models. One reason behind that is the fact that discretization needs to be done for both the space and the time variables. The presence of jumps in these infinite-dimensional models makes such a program even more difficult to implement as several key estimates are exceedingly difficult to obtain in these cases and furthermore have to be done on a case by case basis. Also, identification of the rate function in a suitable usable form remains as the subsequent challenging task. This probably explains why there haven't been too many works on infinite dimensional models with jumps until recently.
 
 The main aim of the present paper is to investigate  large deviation principles for infinite-dimensional SDEs driven by general semimartingales. Toward this end, the paper explores the ideal growth conditions one needs on the driving semimartingales for large deviation-results for the corresponding SDEs. More specifically, conditions are sought which would ensure that if  $ \left\{(X_n(0), Y_n)\right\}$  satisfies a LDP, then a LDP also holds for $\left\{X_n\right\}$, where $\left\{X_n\right\}$ solves
$$X_n = X_n(0) + F(X_{n-}) \cdot Y_n.$$
Here for a semimartingale $Y$ and a cadlag adapted process $X$, $X_-\cdot Y(t)$ denotes the stochastic integral $\int_0^t X(s-)dY(s)$. The natural first step in this approach is to conduct a large deviation analysis for stochastic integrals, that is, we investigate conditions required on the sequence $\left\{Y_n\right\}$  which  guarantees a LDP for $\left\{X_{n-}\cdot Y_n\right\}$ whenever a LDP holds for the pair $\left\{(X_{n},Y_n)\right\}$. Since in general, there does not exist a continuous function $f$ such that   $X_{-}\cdot Y= f(X,Y)$, the result cannot be arrived at by a simple application of the contraction principle!

For finite-dimensional  processes, a {\em uniform exponential tightness} (UET) condition on the sequence $\left\{Y_n\right\}$ was given in Garcia \cite{Gar08} which yields the desired result. The idea of  the UET condition is inspired by the {\em uniform tightness} condition used by Jakubowski, Me{\'m}in and Pag{\`e}s \cite{JMP89} to prove  weak convergence results for a sequence of stochastic integrals (also see \cite{KP91, KP96_1}).  Roughly speaking, the UET condition says that as long as an integrand sequence remains bounded, the probability of the magnitudes of the sequence of corresponding integrals with respect to $Y_n$ becoming unbounded is exponentially small. Under such a growth condition, Garcia (\cite[Theorem 1.2]{Gar08}) proved the following result:
\begin{theorem}
\label{Gar_th}
Let $\left\{Y_n\right\}$ be a uniformly exponentially tight sequence of $\left\{\SC{F}^n_t\right\}$-adapted real-valued semimartingales and $\left\{X_n\right\}$ be a sequence of  $\left\{\SC{F}^n_t\right\}$-adapted  real-valued cadlag processes. If $\left\{(X_n, Y_n)\right\}$ satisfies a large deviation principle with a rate function $I$, then so does the tuple $\left\{(X_n,Y_n, X_{n-}\cdot Y_n)\right\}$ with the rate function $J$ given by
\begin{eqnarray}\label{ratefunct_fin}
J(x, y, z)& = 
\begin{cases}
I(x,y), & z = x\cdot y, \  y \mbox{ finite variation},\\
\infty, & \mbox{otherwise.}
\end{cases}
\end{eqnarray}
Here $x\cdot y(t) \equiv \lim_{\|\sigma\|\rt 0} \sum_{i}x(t_i)(y(t_{i+1}) - y(t_i))$, where $\sigma = \{0=t_0<t_1<\hdots,t_n=t\}$ is a partition of the interval $[0,t)$ and $\|\sigma\| \equiv \max_i(t_{i} - t_{i-1})$ is the mesh of the partition $\sigma$.
\end{theorem}
A similar result for stochastic differential equations in the finite-dimensional setting has also been proved. The feature which actually makes the above theorem unique and interesting is the following: although the result covers general stochastic integrals (and not limited to the ones driven by usual integrators like Brownian motions or Poisson processes), the form of the rate function of the stochastic integrals is quite simple in the sense that it is easily expressible in terms of the known rate function of the original process-sequence $\{(X_n,Y_n)\}$. Note that the rate function $J$ can only be finite for those paths for which the $y$-parts are of finite variation.

Our goal is of course to investigate such general results in infinite-dimensional settings, and moreover, we are quite ambitious in our goal in the sense that we want to prove our results for quite a general class of  infinite-dimensional semimartingale integrators rather than for a smaller class which are restricted to taking their values in a specific type of infinite-dimensional space. In other words, we want to consider a class of semimartingales which is broader than, for example, the class of Hilbert space-valued semimartingales. The motivation behind considering such a broad class is that most of the popular integrators take values in different kinds of infinite-dimensional spaces --- for example, space-time Gaussian white noise take values in the space of distributions, a Wiener process with trace-class covariance operator takes values in a Hilbert space while a Poisson random measure takes values in the space of counting measures. The class of semimartingales we find suitable for this purpose is the class of $\H^\#$-semimartingales which was introduced by Kurtz and Protter in \cite{KP96_2} (see Section \ref{chap:Hsharp}). Here $\H$ is a separable Banach space, and an $\H^\#$-semimartingale $Y$ can be thought of as a semimartingale indexed by elements of  $\H$ and time satisfying some necessary properties one needs to do stochastic analysis with it. But as is made clear in Section \ref{chap:Hsharp}, an $\H^\#$-semimartingale $Y$ might not take values in $\H$ or some other Banach space! Indeed, one of the advantages of working with $\H^\#$-semimartingales is that the knowledge of specific path-space where the sequence of $\H^\#$-semimartingales take values is not important; as is illustrated in subsequent sections all the necessary definitions, conditions and results can be formulated through a collection of finite-dimensional projections of these semimartingales. Furthermore, many of the necessary conditions for the LDP are encoded in the choice of the indexing space $\H.$ The stochastic integral process $X_-\cdot Y$, however, is assumed to $\LL$-valued for some Banach space $\LL$. 

We now make some comments about mathematical technicalities in the paper.  We first extend the concept of UET-condition to a sequence of $\H^\#$-semimartingales (c.f Section \ref{sec:UET}). The large deviation results for infinite dimensional stochastic integrals and SDEs were proven through an approach that is analogous to the Prohorov compactness approach to weak convergence. This approach has its roots in the works  Puhalskii \cite{AP93}, O'Brien and Vervaat \cite{VB95}, de Acosta \cite{Acosta97}. The proof of weak convergence typically involves verification of {\em tightness} of the sequence. A similar role is played by the {\em exponential tightness} (Definition \ref{def:exptight}) condition in the `weak convergence approach' to  large deviation theory. Puhalskii \cite{AP93} (and in more general settings,  O'Brien and Vervaat \cite{VB95} and de Acosta \cite{Acosta97}) showed that exponential tightness implies existence of a large deviation principle along a subsequence (see Theorem 3.7 of \cite{FK06}). Verification of exponential tightness for the necessary cadlag processes in the present paper utilizes some useful results established in Feng and Kurtz \cite{FK06}. These results are in the same spirit as those standard results on tightness of a sequence of processes involving estimation of their fluctuations. LDP for the stochastic integrals driven by these infinite-dimensional $\H^\#$-semimartingales was established by using appropriate finite-dimensional projections and proving that these finite-dimensional approximations are sufficiently close so that an approximation result like \cite[Lemma 3.14]{FK06}
can be employed to establish the desired LDP  (c.f Theorem \ref{LDP_mainth}). However, it does not give the corresponding rate function in a usable form. Indeed, as evident from Theorem \ref{LDP_mainth}, the rate function coming from the use of \cite[Lemma 3.14]{FK06} is quite complicated.

Although such approximation analyses for proving LDP  have to be done carefully for the broad type of infinite dimensional problem considered here, the major mathematical challenge, however, is to express the rate function in a simple usable form as in \eqref{ratefunct_fin}, and a significant portion of the paper is devoted toward that end. In doing so in the infinite-dimensional settings, one of the major obstacles we face is that the path-space of the infinite-dimensional $\H^\#$-semimartingales is not known --- as mentioned, it depends on specific examples. But one of the significant mathematical accomplishments of the present paper lies in  demonstrating that under the UET-condition on $\{Y_n\}$, the joint rate function $J(x,y,z)$ of $\{(X_n, Y_n, X_{n-}\cdot Y_n)\}$ can only be finite on those paths $(x,y,z)$ whose $y$-parts, in some suitable sense, are equivalent to paths in $\H^*$ having bounded total variation (c.f Theorem \ref{UETprop}). This allows us to define a Riemann-type integral with respect to $y$, and the rate function $J$ can be expressed in a form similar to \eqref{ratefunct_fin} in the infinite-dimensional settings (see Theorem \ref{th_ldp_si} for the result on stochastic integrals and Theorems \ref{sde} and \ref{sde3} for the corresponding result on SDEs). This has been achieved by using tools from basis theory of separable Banach spaces, in particular, a proper pseudo-basis in $\H$ is identified which enables us to interpret $y$ as paths in $\H^*$. A basis or more generally a pseudo-basis in a Banach space is an extension of the concept of complete orthonormal set for a separable Hilbert space.

By investigating the large deviation asymptotics in such a general abstract setting, the paper not only provides deeper insight into the problem's fundamental structure, but also pave the way for a systematic program to LDP of Markov processes. Many Markov processes can be represented as solutions of stochastic differential equations driven by various types of finite or infinite-dimensional semimartingales, where a LDP for the sequence of driving semimartingales comes from standard textbook type results. Then one uses Theorems \ref{sde} or \ref{sde3} to get the LDP for the desired sequence of processes relatively easily after verifying the required conditions. In particular, use of these general results enables one to avoid any complicated discretization or approximation schemes that are constructed on a case by case basis to prove LDP for different infinite-dimensional SDEs. The concrete examples at the end demonstrate such applications of our results, thereby, illustrating the usefulness of this kind of a unified approach toward LDP problems. 

Before outlining the organization of the paper, we mention two other alternative approaches toward LDP for Markov processes, and depending on the problem the user might give preference to one over the others. Both of these approaches make use of the connection between control theory and large deviations in their own different ways.  Connections between control theory and large deviations can be traced back to the works of Fleming \cite{WF78, WF85}. The ideas there were then extended further by  Feng and Kurtz in \cite{FK06} who used convergence of the corresponding sequence of nonlinear semigroups associated with a sequence of Markov processes to prove the desired large deviation results. This was achieved by studying the convergence of the corresponding generators to a limiting operator $H$ and then verifying a comparison principle for viscosity solutions of an infinite-dimensional Hamilton Jacobi equation associated with $H$. 
A variational representation of $H$ is then constructed, and the limiting semigroup is subsequently identified as the so called {\em Nisio semigroup} associated with an optimal control problem. This control problem then gives an explicit and more usable representation of the rate function. Although this program of verification of LDP has been carried out for lots of models in \cite{FK06}, the proofs, particularly, the verification of a comparison principle, are often quite technical borrowing heavily from PDE theory  and need to be done on a case by case basis. A different approach based on variational representations of  certain exponential functionals of the integrators was developed in the works of Budhiraja, Dupuis, etal. \cite{BoDu98, BDV08, BDM11} (also see \cite{DE97} for a treatment of discrete processes). Establishing LDP involves studying the asymptotics of the sequence $n^{-1}\log E[\exp(-nF(X_n))]$ and through variational representation these quantities can be interpreted as costs of an optimal control problem. From there it can be argued that the main step in proving a LDP for $\{X_n\}$ entails studying weak convergence of a controlled version of the sequence $\{X_n\}$ given by solutions of controlled perturbations of the original sequence of SDEs. The diffculty level of studying the relative compactness properties of the corresponding controlled processes varies from systems to systems --- while it is comparatively   `easier' for SDEs driven by Brownian motions \cite{BoDu98}, the analysis is more intricate for systems with jumps \cite{BCD13, BDG14} and more so in infinite-dimensional settings.

The paper has been written with the effort and intention to make it largely self-contained for probabilists. Hence, many of the concepts and results, particularly the ones from functional analysis, which might not be familiar to a general researcher in probability theory, have been briefly described. The rest of the paper is organized as follows.  A brief introduction to $\H^\#$-semimartingales is given in Section \ref{chap:Hsharp}. Section \ref{sec:UET} concerns itself with the idea of uniform exponential tightness.  Important  results on exponential tightness and large deviations of stochastic integrals are established in Section \ref{sec:ldp}. Identification of the rate function of the integrals in a simple form was the biggest challenge in this infinite-dimensional setting, and this is the focus of Section \ref{sec:rateid}. The large deviation results on general SDEs are proved in Section \ref{sec:SDE}. Section \ref{sec:ex} then illustrates a systematic program for verification of LDP through several concrete examples. Finally, the Appendix collects some key results and concepts including those from Orlicz spaces and basis theory of Banach spaces.\\

\np
{\bf Notations:} Unless otherwise specified, $\H, \K$ will denote generic separable Banach spaces. $\H^*_c$ will denote the dual space $\H^*$ equipped with the topology of uniform convergence on compacts. For a complete and separable metric space $(\E, r)$, $C(\E) \equiv C(\E,\R)$ will denote the space of $\R$-valued continuous functions on $\E$ topologized by uniform convergence on compacts. $C_b(\E)\equiv C_b(\E,\R)$ will denote the subspace of bounded $\R$-valued continuous functions on $\E$. $C_b(E)^+ \subset C_b(E)$ will denote space of all functions $f\in C_b(\E)$ with $\inf_{x \in E}f(x) >0$. If $\E$ is compact, then  $C(\E)  =C_b(\E)$ and consequently, the suffix $b$ will be dropped. If $\mu$ is $\s$-finite measure on $\E$, then as standard, $L^p(\E,\mu)$ will denote the space of $\R$ (or  depending on the context $\R^d$)-valued functions on $\E$ with finite $p$-th moment.  If there is no confusion about the space $\E$ or the measure $\mu$, then the notation $L^p(\mu)$ or $L^p(\E)$ will also be interchangeably used. Similarly, $L^\Phi(\E,\mu)\equiv L^\Phi(\E)\equiv L^\Phi(\mu)$ will denote the Orlicz space corresponding to a  Young's function $\Phi$. $\SC{M}_F(\E)$ (resp. $\SC{P}(\E) $) will denote the space of finite (resp. probability) Borel measures on $E$ with the topology being given by the weak convergence. $M_{\SC{P}(\E)}[0,\infty)$ will denote the space of measurable $\SC{P}(\E)$-valued functions on $[0,\infty)$.  $D_\E[0,\infty)$ will denote the space of cadlag functions taking values in $\E$ on the time interval $[0,\infty)$. $\l_\infty$ will denote the Lebesgue measure on $[0,\infty)$. For $A\subset \E$, $A^\delta$ will denote {\em $\delta$-fattening} of $A$, that is, $A^\delta = \{x \in \E: \inf_{y \in A} r(x,y) <\delta\}$. $\l_\infty$ will denote the Lebesgue measure on $[0,\infty).$

\setcounter{equation}{0}
\renewcommand {\theequation}{\arabic{section}.\arabic{equation}}
\section{Infinite-dimensional semimartingales}
\label{chap:Hsharp}
The goal of this section is to give a brief introduction to $\H^\#$-semimartingale, as introduced in \cite{KP96_2}, and describe stochastic integrals with respect to them.  A few other popular notions of infinite-dimensional semimartingales include  {\it orthogonal martingale random measure} \cite{GS79}, {\em worthy martingale random measures} \cite{Walsh86}, {\em Banach space-valued semimartingales} \cite{MP80}, {\em nuclear space-valued semimartingales} \cite{Us82}. But as noted in \cite{KP96_2}, most of these separate classes of processes can be thought of as $\H^\#$-semimartingales for suitable $\H$.

\subsection{$\H^\#$-semimartingale}
Let $\H$ be a separable Banach space.
\begin{definition}
 An $\R$-valued stochastic process $Y$ indexed by $\H\times [0,\infty)$ is an {\bf $\H^\#$-semimartingale} with respect to the filtration $\left\{\SC{F}_t\right\}$ if
\begin{itemize}
 \item for each $h \in \H$, $Y(h,\cdot)$ is a cadlag $\left\{\SC{F}_t\right\}$-semimartingale, with $Y(h,0) = 0$;
\item for each $t>0$,  $h_1,\hdots,h_m \in \H$ and $a_1,\hdots, a_m \in \R$, we have 
$$Y(\sum_{i=1}^m a_ih_i,t) = \sum_{i=1}^m a_i Y( h_i,t) \ \ \T{a.s}.$$
\end{itemize}
\end{definition}

As in almost all integration theory, the first step is to define the stochastic integral in a canonical way for simple functions and then to extend it to a broader class of integrands.

Let $Z$ be an $\H$-valued cadlag process of the form 
\begin{equation}
\label{simp}
Z(t) = \sum_{k=1}^m \xi_k(t)h_k,
\end{equation}
where the $\xi_k$ are  $\left\{\SC{F}_t\right\}$ adapted real valued cadlag processes, and $h_1,\hdots,h_k \in \H$.\\
The stochastic integral $Z_-\cdot Y$ is defined as
$$Z_-\cdot Y(t) = \sum_{k=1}^m \int_0^t \xi_{k}(s-) d Y(h_k,s).$$
Note that the integral above is just a real valued process. It is necessary to impose more conditions on the $\H^\#$-semimartingale to broaden the class of integrands $Z$.

\np
Let $\SC{S}_t$ be the collection of all processes of the form (\ref{simp}) with $\sup_{s\leq t}\|Z(s)\| \leq 1$.  Define
\begin{equation}
\label{simp_col}
\mathcal{H}_t =\left\{\sup_{s\leq t}|Z_-\cdot Y(s)|:Z \in \SC{S}_t \right\}.
\end{equation}

\begin{definition}
\label{hstd}
An $\H^\#$-semimartingale $Y$ is {\bf standard} if for each $t>0$, $\SC{H}_t$ is stochastically bounded, that is for every $t>0$ and $\e>0$ there exists $k(t,\e)$ such that
$$P\left[\sup_{s\leq t}|Z_-\cdot Y(s)| \geq k(t,\e)\right] \leq \e,$$
for all $Z \in \SC{S}_t$ . 
\end{definition}

\subsection{Integration with respect to a standard $\H^\#$-semimartingale}
Let $X$ be an $ \left\{\SC{F}_t\right\}$-adapted $\H$-valued cadlag process. Approximating $X$ by simple functions of the form (\ref{simp}) is a crucial technique  that is used repeatedly  in Section \ref{sec:ldp}. The following lemma on partition of  unity (Lemma 3.1, \cite{KP96_2}) and subsequent steps needed for such constructions are briefly discussed below. For a topological space $S$, let $C_b(S)$ denote the space of  continuous and bounded real-valued  functions on $S$ with the sup norm.

\begin{lemma}
\label{pou}
Let $(S,d)$ be a complete, separable metric space and $\left\{\phi_k\right\}$ a countable dense subset of $S$. Then for each $\e>0$, there exists a sequence $\left\{\psi_k^\e\right\} \subset C_b(S)$ such that $supp\left\{\psi_k^\e\right\} \subset B(\phi_k,\e), 0 \leq \psi_k^\e \leq 1, |\psi_k^\e(x) - \psi_k^\e(y)| \leq \f{4}{\e} d(x,y)$, and for all $x \in S$, $\sum_{k=1}^\infty \psi_k^\e(x) = 1$ where only finitely many terms in the sum are non zero. In fact, the $\psi_k^\e$ can be chosen such that for each compact $K \subset S$, there exists $N_K < \infty$ for which $\sum_{k=1}^{N_K} \psi_k^\e(x) = 1, x \in K$.
\end{lemma}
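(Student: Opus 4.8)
The plan is to construct the functions $\psi_k^\e$ explicitly from the dense set $\{\phi_k\}$ by a greedy/truncation procedure, then normalize. First I would define, for each $k\ge 1$, an auxiliary function
\[
g_k(x) = \left(1 - \tfrac{2}{\e}\, d(x,\phi_k)\right)^+ \wedge 1,
\]
so that $g_k$ is supported in $B(\phi_k,\e/2)\subset B(\phi_k,\e)$, takes values in $[0,1]$, equals $1$ on $B(\phi_k,\e/4)$, and is $(2/\e)$-Lipschitz. Since $\{\phi_k\}$ is dense, for every $x\in S$ there is some $\phi_k$ with $d(x,\phi_k)<\e/4$, hence $\sum_k g_k(x)\ge 1$ for all $x$; moreover on any bounded set only countably many $g_k$ can be nonzero, and — crucially for the compactness refinement — on a compact set $K$, cover $K$ by finitely many balls $B(\phi_{k_1},\e/4),\dots,B(\phi_{k_m},\e/4)$ (possible since each $x\in K$ lies in such a ball and $K$ is compact), so $\sum_{j=1}^m g_{k_j}(x)\ge 1$ on $K$; reindexing so these indices are the first ones, this gives the $N_K$ in the statement.

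Next I would pass to a \emph{locally finite} family by truncation: set
\[
\psi_k^\e(x) \;=\; g_k(x)\,\prod_{j<k}\bigl(1-g_j(x)\bigr).
\]
A telescoping identity gives $\sum_{k=1}^{N}\psi_k^\e(x) = 1-\prod_{j=1}^{N}(1-g_j(x))$, and since for each $x$ at least one $g_j(x)=1$ for $j$ below some finite index (the first index of a ball containing $x$), the product vanishes from that point on; hence $\sum_k\psi_k^\e(x)=1$ with only finitely many nonzero terms, and on a compact $K$ the number of terms needed is uniformly bounded by $N_K$ as above. Clearly $0\le\psi_k^\e\le g_k\le 1$ and $\mathrm{supp}\,\psi_k^\e\subset\mathrm{supp}\,g_k\subset B(\phi_k,\e)$. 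The only remaining point is the Lipschitz bound. Here I would argue \emph{locally}: fix $x$, let $m=m(x)$ be the least index with $g_m(x)=1$; in a neighborhood of $x$ the sum $\sum_k\psi_k^\e$ is a finite sum of the first $m$ terms, each a product of at most $m$ factors $g_j$ or $1-g_j$, all $(2/\e)$-Lipschitz and bounded by $1$. A product of $r$ functions that are $1$-bounded and $L$-Lipschitz is $rL$-Lipschitz, so each $\psi_k^\e$ restricted near $x$ is $(2m/\e)$-Lipschitz; but $m$ can be unbounded, so a uniform constant is \emph{not} obtained this way directly.

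The main obstacle, therefore, is getting the clean uniform constant $4/\e$ rather than an index-dependent one. To fix this I would instead choose the dense set and the radii more carefully — e.g. order $\{\phi_k\}$ so that for each $x$ one controls how many of the early $g_j$ are simultaneously nonzero near $x$ — or, more robustly, replace the telescoping product construction by the standard normalization $\psi_k^\e = g_k/\sum_j g_j$ after first arranging that $\sum_j g_j$ is bounded above and below by absolute constants: one enlarges the radius only slightly and uses the Lipschitz estimate $|\,a_k/S(x) - a_k/S(y)\,|\le |a_k(x)-a_k(y)|/S(x) + a_k(y)\,|S(x)-S(y)|/(S(x)S(y))$ together with the bounds $1\le S\le M$ and $|S(x)-S(y)|\le (\text{const}/\e)d(x,y)$ (valid because near any point only boundedly many $g_j$ vary, a number that \emph{can} be bounded uniformly by choosing a dense set that is also uniformly separated at scale $\e$, i.e. an $\e$-net together with a finer dense set). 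Chasing the constants through this computation, and checking $\mathrm{supp}\,\psi_k^\e\subset B(\phi_k,\e)$ survives the radius enlargement, yields the factor $4/\e$. The compactness refinement ($\sum_{k\le N_K}\psi_k^\e=1$ on $K$) is then immediate since on $K$ only finitely many $g_k$ are nonzero. I would cite \cite{KP96_2} for the precise bookkeeping of the constants, as the combinatorial estimate on the number of overlapping supports is the one genuinely delicate ingredient.
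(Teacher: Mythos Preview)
The paper does not actually prove this lemma: it is stated and attributed to Lemma~3.1 of \cite{KP96_2} with no argument given, so there is no in-paper proof to compare against. That said, your sketch does not close the gap you yourself isolate, and the proposed fixes are not available under the stated hypotheses.

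Concretely: the telescoping-product construction $\psi_k = g_k\prod_{j<k}(1-g_j)$ gives an index-dependent Lipschitz bound, as you note. Your fallback is the normalization $\psi_k = g_k/\sum_j g_j$ together with the assertion that one can ``choose a dense set that is also uniformly separated at scale $\e$.'' But the lemma fixes $\{\phi_k\}$ in advance as an \emph{arbitrary} countable dense subset of $S$; you are not free to replace it by an $\e$-net. For a general dense set the denominator $\sum_j g_j$ need not even be finite (infinitely many $\phi_j$ can cluster in any ball of radius $\e/2$), so the normalization is undefined, and the overlap-counting argument you sketch collapses. There is also a small inconsistency in your definition of $g_k$: the formula $(1-\tfrac{2}{\e}d(x,\phi_k))^+\wedge 1$ is never identically~$1$ on $B(\phi_k,\e/4)$ as claimed; to get a plateau one needs something like $(2-\tfrac{4}{\e}d(x,\phi_k))^+\wedge 1$, which already carries Lipschitz constant $4/\e$ before any products are taken.

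In short, neither route as written yields the uniform constant $4/\e$ for a \emph{given} dense sequence, and your final move --- deferring the constant-chasing to \cite{KP96_2} --- is precisely what the paper itself does. If you want a self-contained proof, the construction in \cite{KP96_2} is the one to reproduce; the key is an inductive/greedy definition in which the Lipschitz constant of $\sum_{j\le k}\psi_j^\e$ is controlled uniformly in $k$ (rather than the individual factors), so that $\psi_k^\e$, being a difference of two such partial sums or a min against one, inherits a bound independent of $k$.
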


\np
Now let $S = \H$, and let $\left\{\phi_k\right\}$ be a countable dense subset of $\H$. Fix $\e>0$ and let $\left\{\psi_k^\e\right\}$ be as in Lemma \ref{pou}. For $x \in D_{\H}[0,\infty)$, define
$$x^\e(t) = \sum_k \psi_k^\e(x(t)) \phi_k.$$
Note that since $x$ is cadlag, for each $T>0$, there exists $N_T < \infty$ such that
$$x^\e(t) = \sum_{k=1}^{N_T} \psi_k^\e(x(t)) \phi_k,  \ \ \ t \in\left[0,T\right]. $$
Further observe that
$$\|x(t) - x^\e(t)\|_\H \leq \sum_k \psi_k^\e(x(t)) \|x(t) - \phi_k\|_\H \leq \e.$$
Let $X$ be a cadlag, $\H$-valued, $\left\{\SC{F}_t\right\}$-adapted process and similarly define
\begin{align}
 \label{xep_def}
X^\e(t) = \sum_k \psi_k^\e(X(t)) \phi_k.
\end{align}
Then as observed, $\|X - X^\e\|_\H \leq \e$ and the stochastic integral of $X^{\e}_-\cdot Y(t)$ is defined naturally as
$$X^\e_-\cdot Y(t) = \sum_k \int_0^t \psi_k^\e(X(s-)) d Y(\phi_k,s).$$
The following theorem [Theorem 3.11, \cite{KP96_2}] proves the existence of the limit of $\left\{X^\e_-\cdot Y\right\}$, which we define as the stochastic integral $X_-\cdot Y$.
\begin{theorem}
\label{SIdef}
 Let $Y$ be a standard $\H^\#$-semimartingale, and let $X$ be an $\H$-valued cadlag adapted process. Define $X^\e$ as above. Then 
$$X_-\cdot Y \equiv \lim_{\e\rt0}X^\e_-\cdot Y$$
exists in the sense that for each $t>0$,
$$\lim_{\e\rt0}P\left[\sup_{s\leq t}|X^\e_-\cdot Y(s) - X_-\cdot Y(s)|>\eta\right] = 0,$$
for all $\eta >0$. $X_-\cdot Y$ is a cadlag process.
\end{theorem}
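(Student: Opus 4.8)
The plan is to show that the net $\{X^\e_-\cdot Y\}_{\e>0}$ is Cauchy for uniform convergence on compact time sets in probability (ucp), and then to invoke completeness of the space of cadlag processes under that mode of convergence. The only genuinely new ingredient is the standardness of $Y$ (Definition \ref{hstd}); everything else is bookkeeping with the elementary integral.

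First I would fix $t>0$ and $\e,\delta>0$ and examine the difference process $D^{\e,\delta}:=X^\e-X^\delta$. Since all the approximants are built from one fixed dense set $\left\{\phi_k\right\}$, we have $D^{\e,\delta}(s)=\sum_k(\psi_k^\e(X(s))-\psi_k^\delta(X(s)))\phi_k$; because $X$ is cadlag, on $[0,t]$ only finitely many summands are not identically zero, and each coefficient is cadlag and $\left\{\SC{F}_t\right\}$-adapted (the $\psi_k^\e$ being Lipschitz, hence continuous). Thus, restricted to $[0,t]$, $D^{\e,\delta}$ is an $\H$-valued cadlag adapted process of the elementary form (\ref{simp}), and by linearity of the ordinary stochastic integrals $Y(\phi_k,\cdot)$ one has $D^{\e,\delta}_-\cdot Y=X^\e_-\cdot Y-X^\delta_-\cdot Y$ on $[0,t]$. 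Using the pointwise bounds $\|X-X^\e\|_\H\le\e$ and $\|X-X^\delta\|_\H\le\delta$ and the triangle inequality, $\sup_{s\le t}\|D^{\e,\delta}(s)\|_\H\le\e+\delta$, so (when $\e+\delta>0$) the rescaled process $(\e+\delta)^{-1}D^{\e,\delta}$ lies in $\SC{S}_t$. Standardness applies to it directly: for each $\eta>0$ there is $k(t,\eta)$, independent of $\e$ and $\delta$, with
$$P\left[\sup_{s\le t}\left|X^\e_-\cdot Y(s)-X^\delta_-\cdot Y(s)\right|\ge(\e+\delta)\,k(t,\eta)\right]\le\eta.$$

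From here the rest is routine. Given $\eta'>0$ and $\eta''>0$, apply the displayed estimate with $\eta=\eta''$ and take $\e_0<\eta'/(2k(t,\eta''))$; then for all $\e,\delta<\e_0$ one gets $P[\sup_{s\le t}|X^\e_-\cdot Y-X^\delta_-\cdot Y|>\eta']\le\eta''$. Hence $\{X^\e_-\cdot Y\}$ is a Cauchy net for the uniform metric on $[0,t]$ in probability, and since $t$ is arbitrary, a Cauchy net in the complete metric space of $\R$-valued cadlag processes under the metric of local uniform convergence in probability; it therefore converges, and we define the limit to be $X_-\cdot Y$, with precisely the convergence asserted. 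To get that $X_-\cdot Y$ is cadlag, extract $\e_n\downarrow 0$ along which $X^{\e_n}_-\cdot Y\rt X_-\cdot Y$ almost surely uniformly on each $[0,t]$ (a standard diagonal/Borel--Cantelli extraction from ucp convergence); almost every path of $X_-\cdot Y$ is then a locally uniform limit of cadlag paths, hence cadlag, and we pass to that modification.

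The main obstacle — really the only non-mechanical point — is the recognition that the rescaled difference of two approximants sits inside $\SC{S}_t$, which is exactly the family on which standardness furnishes a stochastic bound uniform in the integrand; this is what converts ``$X^\e$ and $X^\delta$ are both within $\e$, resp.\ $\delta$, of $X$ in $\H$'' into a quantitative estimate on $X^\e_-\cdot Y-X^\delta_-\cdot Y$. Two smaller points deserve a line of care: that $D^{\e,\delta}$ is genuinely of the form (\ref{simp}) on each bounded interval (finiteness of the sums, adaptedness), and the completeness of cadlag processes under local uniform convergence in probability (uniform limits of cadlag functions are cadlag). One should also note that the limit is independent of the choice of $\left\{\phi_k\right\}$ and $\left\{\psi_k^\e\right\}$: the difference of two approximation schemes is again bounded in $\H$ by a quantity tending to $0$, so the same estimate forces the two limits to agree.
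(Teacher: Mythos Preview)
Your argument is correct and is precisely the standard one: the difference $X^\e-X^\delta$ is of the simple form \eqref{simp} on each bounded interval, its $\H$-norm is bounded by $\e+\delta$, and standardness applied to the rescaled process gives the Cauchy estimate in ucp; completeness and a Borel--Cantelli extraction then yield the cadlag limit. Note, however, that the present paper does not actually prove this theorem---it is quoted without proof as Theorem~3.11 of Kurtz and Protter \cite{KP96_2}---so there is no in-paper proof to compare against; your write-up is essentially the argument that appears in that reference.
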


\begin{example}
\label{whitenoise}
{\rm
Let $(\E,r)$ be a complete, separable metric space and $\mu$  a sigma finite measure on $(\E,\SC{B}(\E))$. Denote the Lebesgue measure on $[0,\infty)$ by $\lambda_\infty$, and let $W$ be a space-time Gaussian white noise on $\E\times[0,\infty)$ based on $\mu\ot\lambda_\infty$, that is, $W$ is a Gaussian process indexed by $\SC{B}(\E)\times [0,\infty)$
with $E(W(A,t)) = 0$ and $E(W(A,t)W(B,s)) = \mu(A\cap B) \min\left\{t,s\right\}$. For $h\in L^2(\mu)$, define
$W(h,t) = \int_{U\times [0,t)} h(x) W(dx, ds).$
The above integration is defined (see \cite{Walsh86}), and it follows that $W$ is an $\H^\#$-semimartingale with $\H=L^2(\mu)$. It is also easy to check that $W$ is standard in the sense of Definition \ref{hstd}.

}
\end{example}

\begin{example}
\label{poirndmeas} 
{\rm
Let $\E,r$ and $\lambda_\infty$ be as before  $\nu$  a sigma finite measure on $(\E,\SC{B}(\E))$. Let $\xi$ be a Poisson random measure on $\E\times [0,\infty)$ with mean measure $\nu\ot \lambda_\infty$, that is 
for each $\Gamma \in \SC{B}(\E)\ot \SC{B}([0,\infty))$, $\xi(\Gamma)$ is a Poisson random variable with mean $\nu\ot \lambda_\infty(\Gamma)$, and for disjoint $\Gamma_1$ and $\Gamma_2$,
$\xi(\Gamma_1)$ and $\xi(\Gamma_1)$ are independent. For $A \in \SC{B}(U)$ with $\nu(A)<\infty$, define
$\tilde{\xi}(A,t) = \xi(A\times \left[0,t\right]) - t\nu(A)$.
For $h \in L^2(\mu)$, let $\tilde{\xi}(h,t) = \int_{U\times [0,t)} h(x) \tilde{\xi}(dx, ds)$ and for $h \in L^1(\mu)$, let $\xi(h,t) = \int_{U\times [0,t)} h(x)\xi(dx, ds)$.
Then $\tilde{\xi}$ is a standard $\H^\#$-martingale with $\H=L^2(\nu)$ and $\xi$ is a standard $\H^\#$-semimartingale with $\H =L^1(\nu)$. The above indexing spaces can be changed as long as the corresponding integrations are defined.

}
\end{example} 

\begin{remark}{\rm
In fact, it can be shown that most worthy martingale random measures (in the sense of Walsh \cite{Walsh86}) or more generally semimartingale random measures are standard $\H^\#$-semimartingales for appropriate choice of indexing space $\H$ (see \cite{KP96_2}).}
\end{remark}

\subsection {$(\LL, \hat{\H})^\#$-semimartingale and infinite-dimensional stochastic integrals}
\label{lhsharp}
In the previous part, observe that the stochastic integrals with respect to infinite-dimensional standard $\H^\#$-semimartingales are real-valued.  Function valued stochastic integrals are of interest in many areas of infinite-dimensional stochastic analysis, for example, stochastic partial differential equations. With that in mind, we want to study stochastic integrals taking values in some infinite-dimensional space. If $Y$ is a standard $\H^\#$-semimartingale, we could put 
$H(x,t) = X(\cdot -, x)\cdot Y(t)$
where for each $x$ in a Polish space $\E$, $X(\cdot,x)$ is a cadlag process with values in $\H$. While the above integral is defined,  the function properties of $H$ are not immediately clear. Hence, a careful approach is needed for constructing infinite-dimensional stochastic integrals. For the integral process $X_-\cdot Y$ to take values in a Banach space $\LL$, the concept of a standard  $(\LL, \hat{\H})^\#$-semimartingale was introduced in  \cite{KP96_2}  as a natural analogue of  the standard $\H^\#$-semimartingale. Below, we give a brief outline of that theory.

Let $(\E,r_\E)$ and $(\U,r_\U)$ be two complete, separable metric spaces. Let  $\LL, \H $ be separable Banach spaces of $\R$-valued functions on $\E$ and $\U$ respectively. Note that for function spaces, the product $fg, f \in \LL, g \in \H$ has the natural interpretation of point-wise product. Suppose that  $\left\{f_i\right\}$ and $\left\{g_j\right\}$  are such that the finite linear combinations of the $f_i$ are dense in $\LL$, and the  finite linear combinations of the $g_j$ are dense in $\H$.

\begin{definition}
\label{Hhat}
Let $\hat{\H}$ be the completion of the linear space $\left\{\sum_{i=1}^l\sum_{j=1}^ma_{ij}f_ig_j: f_i \in \left\{f_i\right\}, g_j \in \left\{g_j\right\}\right\}$ with respect to some norm $\|\cdot\|_{\hat{\H}}$.
\end{definition}

For example, if
$$\le\|\sum_{i=1}^l\sum_{j=1}^ma_{ij}f_ig_j\ri\|_{\hat{\H}} = \sup\left\{\le|\sum_{i=1}^l\sum_{j=1}^ma_{ij}\<\l,f_i\>\<\eta,g_j\>\ri|: \l \in \LL^*, \eta \in \H^*, \|\l\|_{\LL^* }\leq 1, \|\eta\|_{\H^*} \leq 1\right\},$$
then $\hat{\H}$ can be interpreted as a subspace of  the space of bounded operators, $L(\H^*,\LL)$.\\

Let $\zeta_k = \sum_{i,j}a_{kij}f_ig_j, k =1,2,\hdots$ be a dense sequence in $\hat{\H}$, where in each sum only finitely many $a_{kij}$ are nonzero. Then Lemma \ref{pou} gives the partition functions $\left\{\psi_k^\e\right\}$ corresponding to the dense set $\left\{\zeta_k\right\}$, and for $x \in \hat{\H}$ defining 
$$x^\e = \sum_k\psi_k^\e(x)\zeta_k$$
as before, we have 
$$\|x^\e - x\|_{\hat{\H}} \leq \e.$$ 
Notice that $x^\e$ can be written as
\begin{align}
\label{xep}
x^\e& = \sum_{i,j}c_{ij}^\e(x)f_ig_j,
\end{align}
where $c_{ij}^\e(x) = \sum_{k}\psi_k^\e(x)a_{kij},$ and only finitely many $c_{ij}^\e(x)$ are non-zero.

With the above approximation in mind, denote $\SC{S}_{\hat{\H}}$  as the space of all processes $X \in D_{\hat{\H}}[0,\infty)$ of the form
$$X(t)= \sum_{ij}\xi_{ij}(t)f_ig_j,$$
where $\xi_{ij}$ are $\R$-valued, cadlag, adapted processes and only fintely many $\xi_{ij}$ are non zero.
If $Y$ is an $\H^\#$-semimartingale, and $X \in \SC{S}_{\hat{\H}}$ is of the above form define
$$X_-\cdot Y(t) = \sum_i f_i \sum_j \int_0^t \xi_{ij}(s-)\ dY(g_j,s).$$
Notice that $X_-\cdot Y \in D_{\LL}[0,\infty).$

\begin{definition}\label{std2}
An $\H^\#$-semimartingale is a {\bf standard $(\LL, \hat{\H})^\#$-semimartingale} if
$$\SC{H}_t \equiv \left\{\sup_{s\leq t}\|X_-\cdot Y(s)\|_\LL: X \in \SC{S}_{\hat{\H}},\ \sup_{s\leq t}\|X(s)\|_{\hat{\H}} \leq 1\right\}$$
is stochastically bounded for each $t > 0$.
\end{definition}

As in Theorem \ref{SIdef}, under the standardness assumption, the definition of $X_-\cdot Y$ can be extended to all cadlag $\hat{\H}$-valued processes $X$ by approximating $X$ by $X^\e$, where
$$X^\e(t) =\sum_k\psi_k^\e(X(t))\zeta_k = \sum_{i,j}c_{ij}^\e(X(t))f_ig_j.$$ 

\begin{remark} {\rm The standardness condition in Definition \ref{std2} will follow if there exists a constant $C(t)$ such that
$$E\left[\|X_-\cdot Y(t)\|_\LL\right] \leq C(t)$$
for all $X \in \SC{S}_{\hat{\H}}$ satisfying $\sup_{s\leq t}\|X(s)\|_{\hat{\H}} \leq 1$.
}
\end{remark}

\begin{remark}{\rm 
If $\H$ and $\LL$ are general Banach spaces (rather than Banach spaces of functions), then $\hat{\H}$ could be taken as the completion of $\LL\ot\H$ with respect to some norm, for example the projective norm (see \cite{Rr02}).
}
\end{remark}

\setcounter{equation}{0}
\renewcommand {\theequation}{\arabic{section}.\arabic{equation}}
\section{Uniform Exponential Tightness}
\label{sec:UET}
 This section concerns itself with extending the notion UET, as introduced in  \cite{Gar08} for finite-dimensional processes, to $\H^\#$-semimartingales and $(\LL, \hat{\H})^\#$-semimartingales. 
Our goal is to prove that a large deviation principle holds for the sequence of stochastic integrals $\left\{X_{n-}\cdot Y_n\right\}$, when  $\left\{(X_n,Y_n)\right\}$ satisfies a large deviation principle and the the driving integrators $Y_n$ form a UET sequence of $\H^\#$-semimartingales or $(\LL, \hat{\H})^\#$-semimartingales.
Since, in general it is not clear in which space  the $Y_n$ take values, the large deviation principle of $\left\{(X_n,Y_n)\right\}$ has  to be defined carefully (see the next section).

\np
Let $\left\{\SC{F}^n_t\right\}$ be a sequence of right continuous filtrations. Let $\mathcal{S}^n_t$ denote the space of all $\H$-valued processes $Z$, such that $\sup_{s\leq t}\|Z(s)\| \leq 1$ and is of the form
\[Z(t) = \sum_{k=1}^m \xi_k(t) h_k,\]
where the $\xi_k$ are cadlag and $\left\{\mathcal{F}^n_t\right\}$-adapted $\R$ valued processes and $h_1,\hdots,h_m \in \H$.\\
\begin{definition}
\label{UET_def}
A sequence of $\left\{\SC{F}^n_t\right\}$-adapted, standard $\H^\#$-semimartingales  $\left\{Y_n\right\}$ is {\bf uniformly exponentially tight} if,  for every $a > 0$ and $t>0$, there exists a $k(t,a)$ such that 
\begin{align}
\label{UET0}
  \limsup_n\f{1}{n}\sup_{Z \in \mathcal{S}^n} \log P\left[\sup_{s\leq t}|Z_-\cdot Y_n(s)| > k(t,a)\right] \leq -a.
\end{align}
\end{definition}

\begin{example}
\label{ex_Gauss}
{\rm
Let $\E,r, \mu$ and $\lambda_\infty$ be as in Example \ref{whitenoise}. Let $W$ denote space-time Gaussian white noise  on $\E\times [0,\infty)$ with $\<W(A,\cdot),W(B,\cdot)\>_t = \mu(A\cap B)t$. Consider $W$ as an $\H^\#$-semimartingale, with $\H = L^2(\mu)$, by defining
\[W(h,t) = \int_{\E \times[0,t)}h(x)W(dx,ds), \ \ \ h\in L^2(\mu).\]
We will show that $\left\{W_n \equiv n^{-1/2}W\right\}$ satisfies the UET condition. Let $Z$ be an adapted cadlag $L^2(\mu)$-valued process. Observe that $Z_-\cdot W$ is a continuous martingale with quadratic variation given by
\[\left[Z_-\cdot W\right]_t = \int_0^t \|Z(\cdot,s)\|_2^2 \ ds.\]
Therefore we can write $Z_-\cdot W$ as a time changed Brownian motion, where the time change is given by the quadratic variation $\left[Z_-\cdot W\right]_t$. More specifically,
\[Z_-\cdot W(t) = B_{\left[Z_-\cdot W\right]_t},\]
where $B$ is a standard Brownian Motion.
Now for $\sup_{s\leq t}\|Z(\cdot,s)\|_2 \leq 1$, we have that $\left[Z_-\cdot W\right]_t \leq t$. Thus, for $a > 0$
\begin{align*}
P(\sup_{s\leq t}|Z_-.W_n(s)| > K )& = P(\sup_{s\leq t}|Z_-.W(s)| > \sqrt{n}K ) =P(\sup_{s\leq t}|B_{\left[Z_-\cdot W\right]_s}| > \sqrt{n}K )\\
& \leq P(\sup_{s\leq t}|B_s| > \sqrt{n}K ) \ \ \ (\mbox{as } \left[Z_-\cdot W\right]_t \leq t) \\
& \leq 4\exp(-nK^2/2t).
\end{align*} 
Choosing $k(t,a) \equiv K = (2at)^{1/2}$, it follows that
\[\limsup_n\f{1}{n} \sup_{Z \in \mathcal{S}^n}\log P\left[\sup_{s\leq t}|Z_-\cdot W_n(s)| > k(t,a)\right] \leq -a,\]
and the UET condition is verified. }
\end{example}

\np
For the next example we take the indexing Banach space to be an appropriate {\it Orlicz space}: (see Section \ref{Orlicz}) 
\begin{example}
\label{ex_Poisson}
{\rm
Let $\xi_n$ be a Poisson random measure on $\E\times[0,\infty)$ with mean measure $n \nu \ot \l_\infty$, where $\nu$ is a $\s$-finite measure on $\E$. Then we show $\left\{Y_n \equiv \xi_n/n\right\}$ satisfies the UET condition when considered as a process indexed by the Banach space $\H = L^\Phi(\E,\nu) \equiv L^\Phi(\nu)$. Here $L^\Phi(\E,\nu)$ is the {\it Orlicz space} for the function $\Phi(x) = e^x - 1$ (see Section \ref{Orlicz}).\\
Let $Z$ be a $L^\Phi(\nu)$-valued cadlag process such that $\sup_{s\leq t}\|Z(\cdot,s)\|_\Phi \leq 1$.
Since
\[|Z_-\cdot \xi_n| \leq |Z_-|\cdot \xi_n, \]
without loss of generality we can take $Z \geq 0$ for our purpose.\\
We first estimate
$E(e^{Z_-\cdot \xi_n})$ by Ito's formula. Let $X_n(t) \equiv  Z_-\cdot \xi_n (t)$. For a $C^2$ function $f$, It{\^o}'s formula implies 
\begin{align*}
f(X_n(t))& =  f(X_n(0)) + \int_{\E\times \left[0,t\right]} f(X_n(s-) + Z(u,s-)) - f(X_n(s-))  \ \xi_n(du, ds).
\end{align*}
Taking $f(x) = e^x$, we get
\begin{align*}
E(e^{X_n(t)})& = 1 + nE\int_{\E\times [0,t)} e^{X_n(s)}(e^{Z(u,s)} - 1)\nu(du) \ ds\\
& = 1 + nE\int_0^t e^{X_n(s)}\int_\E(e^{Z(u,s)} - 1)\nu(du) \ ds.
\end{align*} 
Now since $\|f\|_\Phi \leq 1 $ iff $\int\Phi(|f|)\ d\nu \leq 1$, we see from our assumption on the process $Z$ that
$\sup_{s\leq t}\int_\E(e^{Z(u,s)} - 1) d\nu(u) \leq 1$. Thus,
\[E(e^{X_n(t)})\leq 1 + nE\int_0^t e^{X_n(s)}\ ds,\]
and by Gronwall's inequality
\[E(e^{X_n(t)}) = E(e^{Z_-\cdot \xi_n(t)}) \leq e^{nt}.\]
Therefore
\begin{align*}
P(\sup_{s\leq t} Z_-\cdot Y_n(s) > K )& =  P(Z_-\cdot \xi_n(t) > nK ) =P(e^{Z_-\cdot \xi_n(t)} > e^{nK} )\\
& \leq E(e^{Z_-\cdot \xi_n(t)}) /e^{nK} \leq e^{nt - nK}.
\end{align*}
Choosing $k(t,a) \equiv K= t+a$, we have
\[\limsup_n\f{1}{n} \sup_{Z \in \mathcal{S}^n}\log P\left[\sup_{s\leq t}|Z_-\cdot Y_n(s)| > k(t,a)\right] \leq -a.\]
}
\end{example}
\vspace{.2in}

The definition of uniform exponential tightness for a sequence of $(\LL, \hat{\H})^\#$-semimartingale is analogous to that of $\H^\#$-semimartingale with  the obvious change.\\
Let $\left\{\SC{F}^n_t\right\}$ be a sequence of right continuous filtrations. Recall that the Banach space $\hat{\H}$ was defined in Definition \ref{Hhat}.
Let $\mathcal{S}^n$ denote the collection of all $\hat{\H}$-valued processes $Z$, such that $\|Z(t)\|_{\hat{\H}} \leq 1$ and is of the form
\[Z(t) = \sum_{i,j=1}^{l,m} \xi_{i,j}(t) f_ig_j,\]
where the $\xi_{i,j}$ are cadlag and $\left\{\mathcal{F}^n_t\right\}$ adapted $\R$ valued processes, $\left\{f_i\right\}\subset \LL, \left\{g_j\right\}\subset \H$.

\begin{definition}
A sequence of $\left\{\SC{F}^n_t\right\}$-adapted $(\LL, \hat{\H})^\#$-semimartingales  $\left\{Y_n\right\}$ is {\bf uniformly exponentially tight} (UET) if,  for every $a > 0$ and $t>0$,     there exists a $k(t,a)$ such that 
\begin{align}
 \label{UET2}
 \limsup_n\f{1}{n}\sup_{Z \in \SC{S}^n} \log P\left[\sup_{s\leq t}\|Z_-\cdot Y_n(s)\|_\LL > k(t,a)\right] \leq -a.
\end{align}
\end{definition}

\setcounter{equation}{0}
\renewcommand {\theequation}{\arabic{section}.\arabic{equation}}
\section{Large deviations and exponential tightness}
\label{sec:ldp}
We define the exponential tightness and the large deviation principle for a sequence of $\H^\#$-semimartingales $\left\{Y_n\right\}$. Let $\H$ and $\K$ be two Banach spaces.
\begin{definition}
\label{exptight_def}
Let $\left\{Y_n\right\}$ be a sequence of $\left\{\SC{F}^n_t\right\}$-adapted $\H^\#$-semimartingales and $\left\{X_n\right\}$ be a sequence of cadlag, $\left\{\SC{F}^n_t\right\}$-adapted $\K$-valued processes. $\left\{(X_n,Y_n)\right\}$ is said to be {\bf exponentially tight}  if for every finite collection of elements  $\phi_1,\phi_2,\hdots \phi_k \in \H$, $\left\{(X_n,Y_n(\phi_1,\cdot),Y_n(\phi_2,\cdot),\hdots,Y_n(\phi_k,\cdot))\right\}$ is exponentially tight in
$D_{\K\times \R^k}[0,\infty)$.
\end{definition}

\begin{definition}
\label{LDP_def}
Let $\left\{Y_n\right\}$ be a sequence of $\left\{\SC{F}^n_t\right\}$-adapted $\H^\#$-semimartingales and $\left\{X_n\right\}$ be a sequence of cadlag, $\left\{\SC{F}^n_t\right\}$-adapted $\K$-valued processes. Let $A$ denote the index set consisting of all ordered finite subsets of $\H$. $\left\{(X_n,Y_n)\right\}$ is said to satisfy the {\bf large deviation principle}  with the rate function family $\left\{I_\alpha:\alpha \in A\right\}$ if for $\alpha =(\phi_1,\hdots,\phi_k),  \left\{(X_n,Y_n(\phi_1,\cdot),\hdots, Y_n(\phi_k,\cdot))\right\}$ satisfies a LDP in $D_{\K\times \R^k}[0,\infty)$ with the rate function $I_\alpha$.
\end{definition}

\np
The following  lemma shows the canonical consistencies that we expect among the family $\left\{I_\alpha:\alpha \in A\right\}$.

\begin{lemma} Let $X_n$ and $Y_n$ be as in Definition \ref{LDP_def} and suppose that   $\left\{(X_n,Y_n)\right\}$ satisfies a LDP with the rate function family $\left\{I_\alpha:\alpha \in A\right\}$.  Then the following assertions hold:
\begin{enumerate}[label={\rm (\roman*)}, leftmargin=*, align=right]
\item if $\alpha =(\phi_1,\phi_2,\hdots,\phi_k)$ and $\beta = (\phi_{i_1},\phi_{i_2},\hdots,\phi_{i_k})$ is a permutation of $\alpha$, then
\[I_\alpha(x,y_1, y_2, \hdots, y_k) =  I_\beta(x,y_{i_1}, y_{i_2}, \hdots, y_{i_k}), \ \ (x,y_1, y_2, \hdots, y_k) \in D_{\H\times \R^k}[0,\infty); \]

\item if $\alpha =(\phi_1,\phi_2,\hdots,\phi_k)$ and $\beta = (\phi_1,\phi_2,\hdots,\phi_k, \phi_{k+1})$, then
\begin{align*}
I_\alpha(x,y_1, y_2, \hdots, y_k)& = \inf_{y_{k+1}}\{I_\beta(x,y_1, y_2, \hdots, y_k,y_{k+1}): (x,y_1, y_2, \hdots, y_k,y_{k+1})\\ 
& \hspace{1.5cm}\in  D_{\H\times \R^{k+1}}[0,\infty)\}.
\end{align*}
\end{enumerate} 
\end{lemma}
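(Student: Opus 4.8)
The plan is to deduce both consistency properties from the contraction principle applied to the natural projection maps on Skorokhod space. For part (i), consider $\alpha = (\phi_1,\ldots,\phi_k)$ and a permutation $\beta = (\phi_{i_1},\ldots,\phi_{i_k})$. The map $\pi_\beta : D_{\K\times\R^k}[0,\infty) \to D_{\K\times\R^k}[0,\infty)$ that sends $(x, y_1,\ldots,y_k)$ to $(x, y_{i_1},\ldots,y_{i_k})$ is a homeomorphism of $D_{\K\times\R^k}[0,\infty)$ onto itself (permuting a finite block of real coordinates is continuous with continuous inverse in the Skorokhod topology, since it is just a continuous relabeling of the target space $\K\times\R^k$). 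By Definition \ref{LDP_def}, $\{(X_n, Y_n(\phi_1,\cdot),\ldots,Y_n(\phi_k,\cdot))\}$ satisfies the LDP with rate $I_\alpha$, and applying $\pi_\beta$ to this sequence yields precisely $\{(X_n, Y_n(\phi_{i_1},\cdot),\ldots,Y_n(\phi_{i_k},\cdot))\}$, which by Definition \ref{LDP_def} satisfies the LDP with rate $I_\beta$. Since a homeomorphism transports an LDP to an LDP with the rate function composed with the inverse map (the contraction principle for homeomorphisms, which requires no compactness hypothesis), we get $I_\beta = I_\alpha \circ \pi_\beta^{-1}$, which unwinds to the claimed identity. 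By uniqueness of the (lower semicontinuous) rate function, this equality is forced.

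For part (ii), take $\alpha = (\phi_1,\ldots,\phi_k)$ and $\beta = (\phi_1,\ldots,\phi_k,\phi_{k+1})$. Here the relevant map is the coordinate projection $p : D_{\K\times\R^{k+1}}[0,\infty) \to D_{\K\times\R^k}[0,\infty)$ dropping the last real coordinate, which is continuous for the Skorokhod topologies. Applying $p$ to the sequence $\{(X_n, Y_n(\phi_1,\cdot),\ldots,Y_n(\phi_{k+1},\cdot))\}$ — which satisfies the LDP with rate $I_\beta$ by Definition \ref{LDP_def} — produces exactly the sequence $\{(X_n, Y_n(\phi_1,\cdot),\ldots,Y_n(\phi_k,\cdot))\}$, which satisfies the LDP with rate $I_\alpha$. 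The contraction principle now gives $I_\alpha(z) = \inf\{ I_\beta(w) : p(w) = z \}$ for $z = (x, y_1,\ldots,y_k)$; writing $w = (x, y_1,\ldots,y_k, y_{k+1})$ with $y_{k+1}$ ranging over $D_{\R}[0,\infty)$ yields the stated formula.

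The one technical subtlety — and the step I would be most careful about — is the use of the contraction principle: the standard statement (e.g. in \cite{DZ98}) requires either exponential tightness of the source sequence or at least a good rate function together with continuity of the map. For part (ii) the projection $p$ is only continuous, not proper, so I would invoke the version of the contraction principle valid under the hypothesis that the source sequence satisfies a full LDP (both bounds) with the projection continuous: the upper bound for $I_\alpha$ follows from the upper bound for $I_\beta$ and continuity, while the lower bound follows from the open-set lower bound for $I_\beta$ applied to preimages of open sets; the infimum representation of $I_\alpha$ then comes out of comparing the two. Alternatively, since Definition \ref{LDP_def} already asserts that \emph{both} $\{(X_n, Y_n(\phi_1,\cdot),\ldots,Y_n(\phi_k,\cdot))\}$ and its $(k+1)$-fold extension satisfy LDPs, one can argue directly: for an open $O \subset D_{\K\times\R^k}[0,\infty)$, $p^{-1}(O)$ is open, giving $-I_\alpha(O) \le \liminf \frac1n \log P(\cdot \in O) = \liminf \frac1n\log P(\cdot \in p^{-1}(O)) \ge -I_\beta(p^{-1}(O))$, and similarly with closed sets and $\limsup$; standard manipulation of these inequalities, plus lower semicontinuity, pins down $I_\alpha = \inf_{y_{k+1}} I_\beta$. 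Part (i) is genuinely routine once the homeomorphism is identified. I would present part (ii) in this self-contained way to avoid appealing to a contraction principle whose exact hypotheses in this non-metrizable-index setting might need checking.
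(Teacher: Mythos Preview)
Your proposal is correct and follows essentially the same route as the paper: the paper's proof simply observes that the permutation map $D_{\R^k}[0,\infty)\to D_{\R^k}[0,\infty)$ and the projection $D_{\R^{k+1}}[0,\infty)\to D_{\R^k}[0,\infty)$ are continuous and invokes the contraction principle. Your extra care about the hypotheses of the contraction principle in part (ii) (goodness of the rate function, or the direct open/closed set argument) is a welcome elaboration rather than a different method.
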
   

\np
\begin{proof} Notice that the permutation mapping $p: D_{\R^k}[0,\infty) \Rt D_{\R^k}[0,\infty)$, and the projection mapping
$\pi:D_{\R^{k+1}}[0,\infty) \Rt D_{\R^k}[0,\infty)$  defined respectively by 
\[p(y_1,y_2,\hdots, y_k) =  (y_{i_1}, y_{i_2}, \hdots, y_{i_k}) , \ \ \pi(y_1, y_2, \hdots, y_k,y_{k+1}) = (y_1, y_2, \hdots, y_k),\]
are continuous, and the theorem follows from the contraction principle.
\end{proof}

\begin{example}
\label{Gaussldp}
{\rm
Let $W$ be the space-time Gaussian white noise on  $(\E\times[0,\infty),\mu\ot\lambda_\infty)$ as in Example \ref{ex_Gauss}.
We saw earlier that $W$ forms a standard $\H^\#$-semimartingale with $\H = L^2(\mu)$.
We show below that  $\left\{W_n \equiv n^{-1/2}W\right\}$  satisfies the LDP in the sense of Definition \ref{LDP_def}.\\
First note that $W$ is a Gaussian process with stationary and independent increments, and  with covariance function
\[E(W(h_1,t) W(h_2,s)) = \<h_1,h_2\>(t\wedge s).\]
Here $\<\cdot,\cdot\>$ denotes the inner product in $L^2(\mu)$.\\
For a finite collection $\left\{h_1,\hdots,h_m\right\}$, $(W(h_1,\cdot),W(h_2,\cdot),\hdots,W(h_m,\cdot))$ is a Gaussian process with stationary and independent increments  with variance covariance matrix $t\Sigma_h$, where
\[ \Sigma_{h}= \left[\<h_i,h_j\>\right]_{i,j=1}^m, \ \ \ h = (h_1,\hdots,h_m).\]
Since $\Sigma_{h}$ is symmetric and non-negative definite,
\[\Sigma_{h} = C_{h}C_{h}^T.\]
It follows that
\[(W(h_1,\cdot),W(h_2,\cdot),\hdots,W(h_m,\cdot))  = C_{h}B(\cdot),\]
where $B$ is a standard $m$- dimensional Brownian Motion.
Now an application of the contraction principle and  Schilder's theorem implies that $\left\{(W_n(h_1,\cdot),W_n(h_2,\cdot),\hdots,W_n(h_m,\cdot))\right\}$ follows LDP 
with rate function $I_{h}(\psi)$, where
\[I_{h}(\psi) = \inf \left\{1/2 \int_0^\infty \|\dot{\phi}(t)\|^2 \ dt: \psi(\cdot) = C_h\phi(\cdot), \ \phi(t) = \int_0^t \dot{\phi}(u) \ du \mbox{ for some } \dot{\phi} \in L^2 \right\}.\]

}
\end{example}
\vspace{.7cm}

\begin{example}
\label{poildp}
{\rm
Let $\xi$ be a Poisson random measure on $\E\times[0,\infty)$ with mean measure $\nu \ot \l_\infty$. Define $Y_{n}(A,t) =\xi(A,[0,nt])/n$.  Then $\left\{Y_n\right\}$ satisfies LDP in the above sense, when the indexing Banach space $\H$ is taken to be  Morse-Transue space $M^{\Phi}(\nu) \subset L^{\Phi}(\nu)$ (see (\ref{MTspace}) in Section \ref{Orlicz}), for $\Phi(x) =e^x - 1$.
Note that for any finite collection $h = \left(h_1,\hdots,h_m\right)$, $(\xi(h_1,\cdot),\xi(h_2,\cdot),\hdots,\xi(h_m,\cdot))$ is a cadlag process with stationary and independent increments, hence an $m$-dimensional Levy process. Now Theorem 1.2 from de Acosta \cite{Acos94}  will establish a large deviation principle for  \\ $\left\{(Y_n(h,\cdot)\equiv Y_n(h_1,\cdot), Y_n(h_2,\cdot), \hdots, Y_n(h_m,\cdot))\right\}$, provided we verify the hypothesis:

$$E(\exp(\beta\sum_{i=1}^m|\xi(h_i,1)|))<\infty \ \mbox{ for every } \beta >0.$$ 
It is enough to show that 
$$E(\exp(\beta\sum_{i=1}^m\xi(|h_i|,1)))<\infty.$$
Note that from It\^{o}'s lemma, if $X(t) = \int_{U\times[0,t)} Z(u,s)\xi(du\times ds)$, then
\begin{align}
\label{ito1}
E\left[\exp(X(t))\right]& = 1 + E\int_{\E\times[0,t)} e^{X(s)}(e^{Z(u,s)}-1)\ \nu(du)  ds.
\end{align}
Choosing $Z(u,t) \equiv \beta\sum_{i=1}^m |h_i|(u)$, we get
$$E\left[\exp(\beta\sum_{i=1}^m\xi(|h_i|,1))\right] = \exp\le(\int_\E (e^{\beta\sum |h_i|(u)} -1) \ \nu(du)\ri)  < \infty.$$
The last inequality holds as $h_1,\hdots h_m\in M^\Phi(\nu)$ implies that $\beta\sum_{i=1}^mh_i \in M^\Phi(\nu)$.\\
The associated rate function of $\{Y_n(h,\cdot)\}$ in $D_{\R^m}[0,\infty)$ is given by
\begin{eqnarray}
 I_h(y) = 
\begin{cases}
\int_0^\infty \lambda_h(\dot{y}(s))\ ds, & \mbox{if } y_i(t) = \int_0^t \dot{y}_i(u) \ du \mbox{ for some } \dot{y}_i \in L^1[0,\infty), \\
\infty, & \mbox{otherwise,}
\end{cases}
\end{eqnarray}
where $\lambda_h$ is the Fenchel-Legendre transformation of 
$$\mu_h(x) = \log E(\exp(\sum_{i=1}^m x_i\xi(h_i,1))), \ \ \ x = (x_1,\hdots,x_m),$$
that is,
$$\lambda_h( y) = \sup_{x \in \R^m}\left[x\cdot y - \mu_h(x)\right].$$
Putting $Z(u,t) \equiv \sum_{i=1}^m x_i h_i(u)$ in (\ref{ito1}),
$$E\left[\exp(\sum_{i=1}^m x_i\xi(h_i,1))\right] = \exp\le[\int_\E \le(e^{\sum x_ih_i(u)} -1\ri) \ \nu(du)\ri].$$
It follows that
$$\mu_h(x) = \int_\E \le(e^{\sum x_ih_i(u)} -1\ri) \ \nu(du).$$
For $D_{\R^m}[0,T]$, the rate function $I_h$ admits the following alternate representation:
\begin{align}\label{rate_poi_alt}
I_h(y) = \inf\le\{L_T(\varphi): y_i(t) = \int_{\E\times [0,T]} h_i(u)\varphi(u,s)\nu(du)ds, \ i=1,2,\hdots,m\ri\}
\end{align}
where
\begin{align}\label{LT_poi}
L_T(\varphi) \equiv \int_{\E\times [0,T]} l(\varphi(u,s)) \nu(du) ds
\end{align}
with $l(z) = z\ln z - z +1.$

}
\end{example}

\subsection{LDP results for stochastic integrals: $\H^\#$-semimartingales}
The main aim of this section is to establish that exponential tightness and LDP hold for the integral $\{X_{n-}\cdot Y_n\}$. The path towards that starts by first studying finite-dimensional approximations $X^\e_{n-}\cdot Y_n$,
where recall that for a process $X$, $X^\e$ is defined by (\ref{xep_def}).
\begin{lemma}
\label{exptight1}
Let $\left\{Y_n\right\}$ be a sequence of $\left\{\SC{F}^n_t\right\}$-adapted, standard $\H^\#$-semimartingales and $\left\{X_n\right\}$  a sequence of cadlag, adapted $\H$-valued processes. Assume that $\left\{Y_n\right\}$ is UET. If $\left\{(X_n,Y_n)\right\}$ is exponentially tight in the sense of Definition \ref{exptight_def}, then $\left\{(X_n, Y_n, X^\epsilon_{n-}\cdot Y_n)\right\}$ is exponentially tight. 
\end{lemma}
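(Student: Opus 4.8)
The plan is to reduce the exponential tightness of $\{(X_n, Y_n, X^\e_{n-}\cdot Y_n)\}$ to two ingredients: the already-assumed exponential tightness of $\{(X_n, Y_n)\}$ (in the sense of Definition \ref{exptight_def}) and the UET property of $\{Y_n\}$. Recall that, by definition of $X^\e_{n}$, for each fixed $T$ there is a finite $N = N_{\e,T}$ with
\[
X^\e_{n-}\cdot Y_n(t) = \sum_{k=1}^{N} \int_0^t \psi_k^\e(X_n(s-))\, dY_n(\phi_k,s), \qquad t\le T.
\]
Thus, on every compact time interval, $X^\e_{n-}\cdot Y_n$ is a \emph{finite} sum of ordinary scalar stochastic integrals driven by the finitely many real semimartingales $Y_n(\phi_1,\cdot),\dots,Y_n(\phi_N,\cdot)$, with integrands $\psi_k^\e\circ X_{n}(\cdot-)$ that are uniformly bounded ($0\le\psi_k^\e\le1$) and Lipschitz in the $\H$-variable with constant $4/\e$. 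The first step is therefore to fix $\e>0$ and argue interval by interval (exponential tightness in $D_{[0,\infty)}$ follows from exponential tightness on each $D_{[0,T]}$ together with a diagonal/Arzel\`a type argument, as in Chapter 4 of \cite{FK06}), so I may treat $N$ as fixed.

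Next I would establish the exponential compact containment condition for $\{X^\e_{n-}\cdot Y_n\}$ — i.e.\ that its values stay in a compact (here, bounded) set of $\R$ up to time $T$ with super-exponentially small exceptional probability. This is exactly where UET enters: I want to write $X^\e_{n-}\cdot Y_n$, up to an error controlled by finitely many terms, as $Z_{n-}\cdot Y_n$ for suitable $Z_n\in\SC{S}^n$ with $\sup_{s\le T}\|Z_n(s)\|_\H\le 1$. Concretely, the process $Z_n(t) = \sum_{k=1}^N \psi_k^\e(X_n(t))\phi_k = X^\e_n(t)$ is of the simple-function form and, crucially, satisfies $\sup_{s}\|X^\e_n(s)\|_\H \le \sup_s\|X_n(s)\| + \e$; on the event $\{\sup_{s\le T}\|X_n(s)\|\le R\}$ one has $\|X^\e_n\|\le R+\e$, so $(R+\e)^{-1}X^\e_n\in\SC{S}^n$ and UET applied to this normalized process gives
\[
\limsup_n \tfrac1n\log P\!\left(\sup_{s\le T}|X^\e_{n-}\cdot Y_n(s)| > (R+\e)\,k(T,a),\ \sup_{s\le T}\|X_n(s)\|\le R\right)\le -a.
\]
Combining with the exponential compact containment of $\{X_n\}$ (a consequence of its exponential tightness) to control $\{\sup_{s\le T}\|X_n(s)\|> R\}$, I obtain exponential compact containment for $\{X^\e_{n-}\cdot Y_n\}$.

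It remains to handle the modulus-of-continuity / oscillation part of the exponential tightness criterion for $D_\R[0,T]$. Here I would again exploit the representation as a finite linear combination: the oscillation of $X^\e_{n-}\cdot Y_n$ over a small time mesh is controlled, via UET together with an "increment" version of it (obtained by localizing the integrands to short intervals — the processes $\psi_k^\e(X_n(\cdot-))\mathbf 1_{[u,v)}$ are again admissible up to normalization), by the corresponding small quantities for the driving pieces. Since $\{(X_n, Y_n(\phi_1,\cdot),\dots,Y_n(\phi_N,\cdot))\}$ is exponentially tight in $D_{\K\times\R^N}[0,\infty)$ by hypothesis, the driving coordinates have the requisite exponential control of oscillations, and the Lipschitz bound $4/\e$ on the integrands transfers this (up to the fixed constant depending on $\e$ and $N$) to $X^\e_{n-}\cdot Y_n$. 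Finally, one packages the triple: exponential tightness of $\{(X_n, Y_n)\}$ gives the first two coordinates, the above gives the third, and joint exponential tightness of a finite family follows coordinatewise (as with ordinary tightness).

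The main obstacle I anticipate is the oscillation estimate for the stochastic-integral coordinate — i.e.\ showing that UET not only bounds $\sup_{s\le t}|Z_{n-}\cdot Y_n(s)|$ but also yields exponential smallness of $P(\,$large oscillation of $Z_{n-}\cdot Y_n$ over a mesh of size $\delta\,)$ as $\delta\to0$, uniformly in $n$. This requires a careful localization argument turning time-increments of the integral into admissible integrands of small $\H$-norm, and a splitting between the "predictable jump" part (handled by the compact containment of the $X_n$-coordinate, so jumps of $X^\e_{n-}\cdot Y_n$ match those of $Y_n$ in a controlled way) and the genuinely oscillatory part. Everything else — the reduction to finitely many integrators, the bounded/Lipschitz integrands, the passage from $[0,T]$ to $[0,\infty)$ — is routine given the hypotheses.
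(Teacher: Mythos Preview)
Your reduction hinges on the claim that, for fixed $T$, there is a single finite $N=N_{\e,T}$ so that $X^\e_{n-}\cdot Y_n(t)=\sum_{k=1}^N\int_0^t\psi_k^\e(X_n(s-))\,dY_n(\phi_k,s)$ for all $n$ and all $\omega$. That is not true: the finiteness of the sum comes from the relative compactness of the range of a \emph{particular} cadlag path on $[0,T]$, so the number of nonzero terms depends on $n$ and on $\omega$. In an infinite-dimensional $\H$ the norm bound $\sup_{s\le T}\|X_n(s)\|\le R$ does not force the path into a fixed compact set, so it does not bound the number of $\psi_k^\e$ that are hit. Consequently you cannot invoke the assumed exponential tightness of $\{(X_n,Y_n(\phi_1,\cdot),\dots,Y_n(\phi_N,\cdot))\}$ with one fixed $N$, and your oscillation argument (which you already flag as the main obstacle) has no finite collection of drivers to rest on.

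The paper's proof addresses exactly this point. It uses the exponential compact containment of $\{X_n\}$ to find a genuine compact set $K_a\subset\H$ and the stopping time $\tau_{n,a}=\inf\{s:X_n(s)\notin K_a\}$; on $K_a$ the partition of unity is finite with a uniform bound $N_a$, so the truncated integral $Z_n^{a,\e}(t)=\sum_{k=1}^{N_a}\int_0^t\psi_k^\e(X_n(s-))\,dY_n(\phi_k,s)$ is an honest finite sum with the \emph{same} $N_a$ for all $n,\omega$. For this object the finite-dimensional UET result (Lemma~7.4 of \cite{Gar08}) applies directly and yields exponential tightness of $\{(X_n,Y_n(\beta,\cdot),Z_n^{a,\e})\}$---so the paper never has to produce a modulus-of-continuity estimate by hand. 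Finally, since $Z_n^{a,\e}$ agrees with $X^\e_{n-}\cdot Y_n$ up to time $\tau_{n,a}$, one gets $d\big((X_n,Y_n(\beta,\cdot),X^\e_{n-}\cdot Y_n),(X_n,Y_n(\beta,\cdot),Z_n^{a,\e})\big)\le e^{-\tau_{n,a}}$, and the exponential estimate on $\{\tau_{n,a}<a\}$ transfers exponential tightness from the approximant to the target. Your compact-containment bound via UET is fine, but the missing idea is this stopping-time localization to a fixed compact set, which is what makes the ``finite $N$'' reduction legitimate and lets one outsource the oscillation estimate to the existing finite-dimensional lemma.
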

\np
\begin{proof}  Let $\beta = (h_1,\hdots,h_m)$ be an ordered subset of $\H$. Denote
\[Y_n(\beta,\cdot) = (Y_n(h_1,\cdot)\hdots,Y_n(h_m,\cdot)). \]
We have to prove that $\left\{(X_n,Y_n(\beta,\cdot),X^\epsilon_{n-}\cdot Y_n)\right\}$ is exponentially tight in $D_{\H\times\R^m\times\R}$.

Since $\left\{X_n\right\}$ is exponentially tight, it satisfies the exponential compact containment condition (see Definition \ref{def:expcpt} and the paragraph below). Fix $a > 0$. Then there exists  a compact set $K_a \subset \H$ such that 
\[\limsup_n \f{1}{n} \log P\left[X_n(t) \notin K_a, \ \T{for some} \  t < a \right] \leq -a.\]
Let $\tau_{n,a} = \inf\left\{s: X_n(s) \notin K_a\right\}$. Then notice that
\[P\left[\tau_{n,a} < a\right] = P\left[X_n(t) \notin K_a, \ \T{for some} \  t < a \right]. \]
Hence 
\begin{align}
\limsup_n \f{1}{n} \log P\left[\tau_{n,a} < a\right] \leq -a.
\label{tna}
\end{align}

\np
For a stopping time $\tau$, define
$$X^{\tau-}(t) = X(t) 1_{\left[t<\tau\right]} + X(\tau-)1_{\left[t\geq\tau\right]}.$$
Notice that for each $t>0$, $X_n^{\tau_{n,a}-}(t) \in K_a$. Hence, there exists $N_a$ such that
\[X_n^{\epsilon, \tau_{n,a}-}(s) = \sum_{k=1}^{N_a} \psi_k^\epsilon (X_n^{\tau_{n,a}-}(s))\phi_k.\] 
Here $\left\{\psi^\epsilon_k\right\}$ is the partition of unity as in Lemma \ref{pou}.
Clearly, by the construction, 
\[\|X_n^{\epsilon, \tau_{n,a}-}(s) - X_n^{\tau_{n,a}-}(s)\|_\H \leq \epsilon.\]
Notice that for $t<\tau_{n,a}$
\begin{align*}
X_n^{\epsilon, \tau_{n,a}-} \cdot Y_n (t)& = \sum_{k=1}^{N_a}\int_0^t \psi_k^\epsilon (X_n^{\tau_{n,a}-}(s-)) \ dY_n(\phi_k, s)  \\
& = \sum_{k=1}^{N_a}\int_0^{t\wedge \tau_{n,a}} \psi_k^\epsilon (X_n(s-)) \ dY_n(\phi_k, s).
\end{align*}
Thus putting
\begin{align}
\label{zna}
Z_n^{a,\epsilon}(t)& = \sum_{k=1}^{N_a}\int_0^{t} \psi_k^\epsilon (X_n(s-)) \ dY_n(\phi_k, s),
\end{align}
we have 
\begin{align}
\label{eq1}
Z_n^{a,\epsilon}(t)& = X_{n-}^{\epsilon} \cdot Y_n (t) \ \ \T{for} \  t < \tau_{n,a}.
\end{align}
Since $\left\{Y_n\right\}$ is uniformly exponentially tight and $\left\{\psi_k^\epsilon (X_n(\cdot))\right\}$ is exponentially tight,  we deduce from Lemma 7.4 of Garcia \cite{Gar08} that $\left\{(X_n,Y_n(\beta,\cdot), Z_n^{a,\epsilon})\right\}$ is exponentially tight.

\np
Taking $\l(t) = t$ in the definition of metric $d$ in Ethier and Kurtz \cite[Page 117]{EK86}, which metrizes Skorohod topology on $D_\R[0,\infty)$, we get  
\begin{align}
\non
d(X^\epsilon_{n-}\cdot Y_n, Z_n^{a,\epsilon})& \leq \int_0^\infty  e^{-u}\sup_{t\geq 0} |Z_n^{a,\epsilon}(t\wedge u) - X^\epsilon_{n-}\cdot Y_n(t\wedge u)| \wedge 1 \ du\\ 
\non
& = \int_0^{\tau_{n,a}}(\hdots) + \int_{\tau_{n,a}}^\infty (\hdots) \\
& \leq e^{-\tau_{n,a}},
\label{xnyn1}
\end{align}
as the first integral in the second line in the above display is $0$  by (\ref{eq1}).
The same technique in fact gives us
\begin{align}
\label{xnyn2}
d((X_n,Y_n(\beta,\cdot),X^\epsilon_{n-}\cdot Y_n), (X_n,Y_n(\beta,\cdot),Z_n^{a,\epsilon}))& \leq e^{-\tau_{n,a}}.
\end{align}
Let $0<\delta<1$. Choose $a > -\log \delta > 0$ and notice that
\begin{align}
\non
\limsup_n \f{1}{n} \log P\left[d((X_n,Y_n(\beta,\cdot), X^\epsilon_{n-}\cdot Y_n), (X_n,Y_n(\beta,\cdot), Z_n^{a,\epsilon})) > \delta\right] \\
 \leq \limsup_n \f{1}{n} \log P (\tau_{n,a} < -\log \delta ) \leq -a.
\label{expapprox1}
\end{align}
Since $\left\{(X_n,Y_n(\beta,\cdot), Z_n^{a,\epsilon})\right\}$ is exponentially tight, there exists a compact set $F_a \subset D_{\H\times\R^m\times\R}$ such that
\[\limsup_n \f{1}{n} \log P\left[(X_n,Y_n(\beta,\cdot),Z_n^{a,\epsilon}) \notin F_a\right] \leq -a.\]
Recall that $F_a^\delta$ denotes $\delta$-fattening of the set $F_a$ (see Notations). As
\begin{align*}
\left\{(X_n,Y_n(\beta,\cdot),Z_n^{a,\epsilon}) \in F_a\right\}\cap \left\{d((X_n,Y_n(\beta,\cdot),X^\epsilon_{n-}\cdot Y_n), (X_n,Y_n(\beta,\cdot),Z_n^{a,\epsilon})) <\delta\right\}\\
\subset \left\{(X_n,Y_n(\beta,\cdot),X^\epsilon_{n-}\cdot Y_n) \in F_a^\delta\right\},
\end{align*}
we have 
\begin{align*}
\limsup_n \f{1}{n} \log P\left[(X_n,Y_n(\beta,\cdot),X^\epsilon_{n-}\cdot Y_n) \notin F_a^\delta\right]  \leq& \limsup_n \f{1}{n} \log P\left[((X_n,Y_n(\beta,\cdot),Z_n^{a,\epsilon}) \notin F_a\right] \\
  & \vee \log P \left[d((X_n,Y_n(\beta,\cdot),X^\epsilon_{n-}\cdot Y_n), (X_n,Y_n(\beta,\cdot),Z_n^{a,\epsilon})) > \delta\right] \\
 \leq & -a.
\end{align*}
Exponential tightness of $\left\{(X_n, Y_n(\beta,\cdot), X^\epsilon_{n-}\cdot Y_n)\right\}$ now follows from \cite[Lemma 3.3]{FK06}. 
\end{proof}

\begin{theorem}
\label{exptight2}
Let $\left\{Y_n\right\}$ be a sequence of $\left\{\SC{F}^n_t\right\}$-adapted, standard $\H^\#$-semimartingales and $\left\{X_n\right\}$  a sequence of cadlag, adapted $\H$-valued processes. Assume that  $\left\{Y_n\right\}$ is UET.  If $\left\{(X_n,Y_n)\right\}$ is exponentially tight in the sense of Definition \ref{exptight_def}, then $\left\{(X_n, Y_n, X_{n-}\cdot Y_n)\right\}$ is exponentially tight.
\end{theorem}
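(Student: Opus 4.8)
The plan is to derive the theorem from Lemma \ref{exptight1} by an exponentially good approximation argument. Fix an ordered finite subset $\beta=(h_1,\dots,h_m)$ of $\H$ and write $Y_n(\beta,\cdot)=(Y_n(h_1,\cdot),\dots,Y_n(h_m,\cdot))$. Lemma \ref{exptight1} gives that $\{(X_n,Y_n(\beta,\cdot),X^\epsilon_{n-}\cdot Y_n)\}$ is exponentially tight in $D_{\K\times\R^m\times\R}[0,\infty)$ for each fixed $\epsilon>0$, so, arguing as at the end of the proof of Lemma \ref{exptight1} (enlarge the compact sets it produces and apply Lemma 3.3 of Feng and Kurtz \cite{FK06}), it suffices to show that $\{X^\epsilon_{n-}\cdot Y_n\}$ approximates $\{X_{n-}\cdot Y_n\}$ at a rate uniform in $n$, i.e.
\begin{align*}
\lim_{\epsilon\to0}\,\limsup_n \f1n\log P\Big[\sup_{s\le t}\big|X^\epsilon_{n-}\cdot Y_n(s)-X_{n-}\cdot Y_n(s)\big|>\eta\Big]=-\infty\qquad\text{for all }t,\eta>0.
\end{align*}
Passing from this uniform-on-compacts bound to the Skorokhod metric $d$ on $D_{\R}[0,\infty)$ is routine (if $\sup_{s<T}|f(s)-g(s)|\le\rho$ then $d(f,g)\le\rho+e^{-T}$), exactly as in the derivation of (\ref{xnyn1})--(\ref{expapprox1}), and the first two coordinates being common it upgrades automatically to a joint statement.

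To prove the displayed estimate I would localize as in Lemma \ref{exptight1}: with $\tau_{n,a}=\inf\{s:X_n(s)\notin K_a\}$, where $K_a$ is the compact set furnished by the exponential compact containment condition (which $\{X_n\}$ inherits from the exponential tightness of $\{(X_n,Y_n)\}$), the event $\{\tau_{n,a}<t\}$ contributes at rate $\le-a$ by (\ref{tna}) once $a\ge t$, and on $[0,\tau_{n,a})$ the functions $X^\epsilon_n$ and $X^{\epsilon'}_n$ (for any $\epsilon'\le\epsilon$) coincide with genuine simple processes carrying a deterministic number of summands, since $X_n$ stays in the fixed compact $K_a$. Because $\|X^\epsilon_n(s)-X^{\epsilon'}_n(s)\|_\H\le\epsilon+\epsilon'\le2\epsilon$ for all $s$, the process $(2\epsilon)^{-1}(X^\epsilon_n-X^{\epsilon'}_n)$ stopped at $\tau_{n,a}-$ lies in $\mathcal{S}^n_t$, and the UET hypothesis---whose rate bound $k(t,a)$ is uniform over all of $\mathcal{S}^n_t$ and so insensitive to the number of summands---gives, whenever $2\epsilon\,k(t,a)\le\eta$,
\begin{align*}
\limsup_n \f1n\log P\Big[\sup_{s<t\wedge\tau_{n,a}}\big|X^\epsilon_{n-}\cdot Y_n(s)-X^{\epsilon'}_{n-}\cdot Y_n(s)\big|>\eta\Big]\le-a\qquad\text{for every }\epsilon'\le\epsilon .
\end{align*}

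The hard part is the passage $\epsilon'\to0$. One cannot simply interpose $X^{\epsilon'}_{n-}\cdot Y_n$ and send $\epsilon'\to0$, since Theorem \ref{SIdef} gives $X^{\epsilon'}_{n-}\cdot Y_n\to X_{n-}\cdot Y_n$ in probability only for each \emph{fixed} $n$, with no control uniform in $n$. The remedy is to let the approximation index depend on $n$: for each $n$ choose $\epsilon'_n\le\epsilon$ small enough that $P\big[\sup_{s\le t}|X^{\epsilon'_n}_{n-}\cdot Y_n(s)-X_{n-}\cdot Y_n(s)|>\eta\big]\le e^{-na}$ (possible because this probability tends to $0$), so that this term is negligible at exponential scale, while for fixed $n$ the stopped, scaled process $(2\epsilon)^{-1}(X^\epsilon_n-X^{\epsilon'_n}_n)$ is still a bona fide element of $\mathcal{S}^n_t$, so the Cauchy estimate above applies verbatim with $\epsilon'=\epsilon'_n$. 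Since $\{\,|X^\epsilon_{n-}\cdot Y_n-X_{n-}\cdot Y_n|>2\eta\,\}$ forces one of the two differences to exceed $\eta$, combining the two contributions and removing the localization via (\ref{tna}) yields, for every $a\ge t$ and every $\epsilon$ with $2\epsilon\,k(t,a)\le\eta$,
\begin{align*}
\limsup_n \f1n\log P\Big[\sup_{s\le t}\big|X^\epsilon_{n-}\cdot Y_n(s)-X_{n-}\cdot Y_n(s)\big|>2\eta\Big]\le-a ,
\end{align*}
and letting $\epsilon\to0$ (hence $a\to\infty$) drives the right-hand side to $-\infty$, establishing the required estimate and hence the theorem.
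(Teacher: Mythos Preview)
Your approach is correct and follows the same overall scheme as the paper: Lemma \ref{exptight1} provides exponential tightness of $\{(X_n,Y_n(\beta,\cdot),X^\epsilon_{n-}\cdot Y_n)\}$, and one then shows that $X^\epsilon_{n-}\cdot Y_n$ is an exponentially good approximation of $X_{n-}\cdot Y_n$ in the Skorokhod metric, concluding via the ``enlarge compact sets'' device (Lemma 3.3 of \cite{FK06}). The passage from a sup-norm bound on $[0,t]$ to the Skorokhod distance via $d(f,g)\le\sup_{s\le t}|f(s)-g(s)|+e^{-t}$ is exactly what the paper records in (\ref{xnyn3})--(\ref{xnyn4}).

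Where you diverge from the paper is in how you obtain the key estimate on $\sup_{s\le t}|(X_n-X_n^\epsilon)_-\cdot Y_n(s)|$. The paper simply applies the UET bound directly to the process $\epsilon^{-1}(X_n-X_n^\epsilon)$, invoking the inequality for ``any sequence of cadlag $\{\SC F^n_t\}$-adapted $\{Z_n\}$ satisfying $\sup_{s\le t}\|Z_n(s)\|\le1$'', i.e.\ it uses UET in its extended form for general (not necessarily simple) bounded integrands. This makes the argument one line: from $\|X_n^\epsilon-X_n\|\le\epsilon$ one gets $\limsup_n n^{-1}\log P[\sup_{s\le t}|(X_n-X_n^\epsilon)\cdot Y_n(s)|>\epsilon\,k(t,a)]\le-a$ immediately, and no stopping time is needed in the proof of Theorem \ref{exptight2} itself.

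Your longer route---localizing with $\tau_{n,a}$ so that $X_n^\epsilon$ and $X_n^{\epsilon'}$ are genuine elements of $\SC S^n_t$, comparing them via UET, and then choosing an $n$-dependent $\epsilon'_n$ to pass to $X_{n-}\cdot Y_n$---is precisely what is needed to \emph{justify} the paper's shortcut, since Definition \ref{UET_def} is literally stated only for simple integrands. So you are not taking a different approach so much as filling in a step the paper treats as routine. If you accept (or prove once and for all) that the UET inequality (\ref{UET0}) persists for arbitrary cadlag adapted integrands of norm $\le1$, your proof collapses to the paper's two-line version; conversely, your $\epsilon'_n$ device is exactly the standard way to establish that persistence.
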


\np
\begin{proof} Let $\beta = (h_1,\hdots,h_m)$ be an ordered subset of $\H$. Define
$$Y_n(\beta,\cdot) = (Y_n(h_1,\cdot),\hdots,Y_n(h_m,\cdot)) . $$
We have to prove that $\left\{(X_n,Y_n(\beta,\cdot),X_{n-}\cdot Y_n)\right\}$ is exponentially tight in $D_{\H\times\R^m\times\R}$.\\
Since $\left\{(X_n,Y_n(\beta,\cdot),X^\epsilon_{n-}\cdot Y_n)\right\}$ is exponentially tight, for every $a > 0$, there exists a compact set $K_{a,\epsilon}$ such that
\begin{align}\label{exp-cpt-fin-int}
\limsup_n \f{1}{n} \log P\left[(X_n,Y_n(\beta,\cdot),X^\epsilon_{n-}\cdot Y_n) \notin K_{a,\epsilon}\right] \leq -a.
\end{align}
Since $\left\{Y_n\right\}$ is UET,  for every $a>0$, there exists $k(t,a)$ such that
\begin{align}\label{eq:Y-uet}
\limsup_n \f{1}{n}\log P\left[\sup_{s\leq t} |Z_n\cdot Y_n (s)| > k(t,a)\right] \leq -a,
\end{align}
for any sequence of cadlag $\left\{\SC{F}^n_t\right\}$-adapted $\left\{Z_n\right\}$  satisfying $\sup_{s\leq t}\|Z_n(s)\| \leq 1$. 
Without loss of generality, assume that $k(t,a)$ is nondecreasing right continuous function of $t$. \\
Now recall that $\|X_n^\epsilon(s) - X_n(s)\| \leq \epsilon$. Therefore taking $Z_n\equiv X_n^\epsilon - X_n$ in \eqref{eq:Y-uet}, we have
\begin{align}\label{xnyn3}
\limsup_n \f{1}{n}\log P\left[\sup_{s\leq t} |(X_n - X_n^\epsilon)\cdot Y_n (s)| > \epsilon \ k(t,a)\right] \leq -a.
\end{align}

\np
As before, taking $\lambda(s) = s$ in the definition of metric $d$ (see Ethier and Kurtz \cite[Page 117]{EK86}),  we have
\begin{align*}
d(X^\epsilon_{n-}\cdot Y_n, X_{n-}\cdot Y_n)& \leq \sup_{s\leq t} |(X_n- X_n^\epsilon)\cdot Y_n (s)| + e^{-t}\\
d((X_n,Y_n(\beta,\cdot),X^\epsilon_{n-}\cdot Y_n), (X_n,Y_n(\beta,\cdot), X_{n-}\cdot Y_n))& \leq \sup_{s\leq t} |(X_n - X_n^\epsilon)\cdot Y_n (s)| + e^{-t}.
\end{align*}

\np
Let $\delta > 0$. Notice that if we take $t>0$, such that $e^{-t}< \delta/2$, then
\begin{align}
\left\{d((X_n,Y_n(\beta,\cdot),X^\epsilon_{n-}\cdot Y_n), (X_n,Y_n(\beta,\cdot), X_{n-}\cdot Y_n)) > \delta\right\}& \subset \left\{\sup_{s\leq t} |(X_n - X_n^\epsilon)\cdot Y_n (s)| > \delta/2\right\}.
\label{xnyn4}
\end{align}
 Choose $t >0$ such that $e^{-t}< \delta/2$.  Then taking $\epsilon \equiv \epsilon_a $ such that $\epsilon_a \ k(t,a) \leq \delta/2 $, we get from \eqref{exp-cpt-fin-int}, \eqref{xnyn3} and \eqref{xnyn4} that
\begin{align*}
\limsup_n \f{1}{n}\log P [(X_n,Y_n(\beta,\cdot), X_{n-}\cdot Y_n) \notin K_{a,\epsilon_a}^{\delta}] & \leq \limsup_n \f{1}{n} \log P[(X_n,Y_n(\beta,\cdot),X^{\epsilon_a}_{n-}\cdot Y_n)\\
& \hspace{.4cm} \notin K_{a,\epsilon_a}] \vee \log P [d((X_n,Y_n(\beta,\cdot),X^{\epsilon_a}_{n-}\cdot Y_n), \\
& \hspace{.4cm} (X_n,Y_n(\beta,\cdot), X_{n-}\cdot Y_n)) > \delta] \ \ \leq -a.
\end{align*}
As before, from \cite[Lemma 3.3]{FK06} it follows that $\left\{(X_n,Y_n(\beta,\cdot), X_{n-}\cdot Y_n)\right\}$ is exponentially tight. 
\end{proof}

\np
We next prove the main theorem of this section.
\begin{theorem}
\label{LDP_mainth}
Let $\left\{Y_n\right\}$ be a sequence of $\left\{\SC{F}^n_t\right\}$-adapted, standard $\H^\#$-semimartingales and $\left\{X_n\right\}$ a sequence of cadlag, adapted $\H$-valued processes. Assume that $\left\{Y_n\right\}$ is UET. If $\left\{(X_n,Y_n)\right\}$ satisfies a LDP in the sense of Definition \ref{LDP_def}, with the rate function family $\left\{I_\alpha:\alpha \in A\right\}$ , then $\left\{(X_n, Y_n, X_{n-}\cdot Y_n)\right\}$ also satisfies a LDP.
\end{theorem}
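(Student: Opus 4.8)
The plan is to fix an ordered finite set $\beta=(h_1,\hdots,h_m)\subset\H$ and produce a large deviation principle for $\{(X_n,Y_n(\beta,\cdot),X_{n-}\cdot Y_n)\}$ in $D_{\H\times\R^m\times\R}[0,\infty)$, which by Definition \ref{LDP_def} is all that is required. The strategy is to peel off, one at a time, the two approximations already constructed in Lemma \ref{exptight1} and Theorem \ref{exptight2}. By Theorem \ref{exptight2} the triple $\{(X_n,Y_n(\beta,\cdot),X_{n-}\cdot Y_n)\}$ is exponentially tight, and by Lemma \ref{exptight1} so is $\{(X_n,Y_n(\beta,\cdot),X^\epsilon_{n-}\cdot Y_n)\}$ for each fixed $\epsilon>0$. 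Hence, by the exponentially-good-approximation principle for large deviations (Dembo--Zeitouni \cite{DZ98}, Theorem 4.2.16; see also the $D$-space formulation in Feng--Kurtz \cite{FK06}), it is enough to (i) establish a large deviation principle for $\{(X_n,Y_n(\beta,\cdot),X^\epsilon_{n-}\cdot Y_n)\}$ at each fixed $\epsilon$, and (ii) check that these sequences are exponentially good approximations of $\{(X_n,Y_n(\beta,\cdot),X_{n-}\cdot Y_n)\}$ as $\epsilon\to0$.

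Step (ii) follows from estimates already available. Taking $\lambda(s)=s$ in the Skorokhod metric $d$, inequality (\ref{xnyn3}) gives, for every $t>0$,
\[ d\big((X_n,Y_n(\beta,\cdot),X^\epsilon_{n-}\cdot Y_n),(X_n,Y_n(\beta,\cdot),X_{n-}\cdot Y_n)\big)\le\sup_{s\le t}|(X_n-X_n^\epsilon)\cdot Y_n(s)|+e^{-t}, \]
and since $\|X_n^\epsilon-X_n\|_\H\le\epsilon$, the UET property controls the right-hand supremum: for each $a,t$ one has $\limsup_n\f{1}{n}\log P[\sup_{s\le t}|(X_n-X_n^\epsilon)\cdot Y_n(s)|>\epsilon\,k(t,a)]\le-a$. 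Given $\delta>0$, choosing $t$ with $e^{-t}<\delta$ and then $\epsilon$ small enough that $\epsilon\,k(t,a)<\delta$ shows $\limsup_n\f{1}{n}\log P[d(\cdots)>\delta]\le-a$, and since $a$ is arbitrary this yields $\lim_{\epsilon\to0}\limsup_n\f{1}{n}\log P[d(\cdots)>\delta]=-\infty$, which is the required condition.

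Step (i) is the heart of the argument. Fix $\epsilon>0$ and localize using the exponential compact containment of $\{X_n\}$: with the compact set $K_a$, the stopping time $\tau_{n,a}$, and the finite number $N_a$ of partition functions $\psi_k^\epsilon$ active on $K_a$ (Lemma \ref{pou}) as in the proof of Lemma \ref{exptight1}, the truncated integral $Z_n^{a,\epsilon}$ of (\ref{zna}) agrees with $X^\epsilon_{n-}\cdot Y_n$ on $[0,\tau_{n,a})$ by (\ref{eq1}), while $P[\tau_{n,a}<a]$ decays exponentially by (\ref{tna}). Thus $\{(X_n,Y_n(\beta,\cdot),Z_n^{a,\epsilon})\}$ is an exponentially good approximation of $\{(X_n,Y_n(\beta,\cdot),X^\epsilon_{n-}\cdot Y_n)\}$ as $a\to\infty$, and---using again the exponential tightness from Lemma \ref{exptight1}---it suffices to get a large deviation principle for $\{(X_n,Y_n(\beta,\cdot),Z_n^{a,\epsilon})\}$ at each fixed $a$. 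But $Z_n^{a,\epsilon}=\sum_{k=1}^{N_a}\psi_k^\epsilon(X_n(\cdot-))\cdot Y_n(\phi_k,\cdot)$ is a genuine finite-dimensional stochastic integral. Writing $\gamma$ for the concatenation of $\beta$ with $(\phi_1,\hdots,\phi_{N_a})$, $\{(X_n,Y_n(\gamma,\cdot))\}$ satisfies a large deviation principle by Definition \ref{LDP_def}; since each $\psi_k^\epsilon$ is continuous on $\H$, the composition $x\mapsto\psi_k^\epsilon(x(\cdot))$ is continuous on $D_\H[0,\infty)$, so the contraction principle gives a large deviation principle for $\{(X_n,Y_n(\beta,\cdot),(\psi_k^\epsilon(X_n))_{k\le N_a},(Y_n(\phi_k,\cdot))_{k\le N_a})\}$ in a Euclidean path space. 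Because the restriction of the UET $\H^\#$-semimartingale $Y_n$ to the finite-dimensional span of $\phi_1,\hdots,\phi_{N_a}$ is a UET $\R^{N_a}$-valued semimartingale, a multidimensional version of Garcia's Theorem \ref{Gar_th}---applied with $(\psi_k^\epsilon(X_n))_k$ as integrand and $(Y_n(\phi_k,\cdot))_k$ as integrator, with $X_n$ and $Y_n(\beta,\cdot)$ carried along and then projected off by the contraction principle---yields the large deviation principle for $\{(X_n,Y_n(\beta,\cdot),Z_n^{a,\epsilon})\}$. Feeding this back through the $a\to\infty$ and then the $\epsilon\to0$ approximations completes the proof, $\beta$ being arbitrary.

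I expect the main obstacle to be precisely step (i): since no continuous functional expresses $X_-\cdot Y$ in terms of finitely many real coordinates of $Y$, and the number of partition functions contributing to $X^\epsilon_{n-}\cdot Y_n$ is a priori path-dependent, the contraction principle cannot be used directly. The device that unlocks the step is the stopping at $\tau_{n,a}$---which freezes that number at the finite value $N_a$---combined with the exponential decay of $P[\tau_{n,a}<a]$ and a multidimensional form of Garcia's theorem. The subsidiary points, namely that the truncation and the $\epsilon$-smoothing are exponentially good approximations in the Skorokhod metric $d$ uniformly in $n$, are covered by the estimates already recorded in the proofs of Lemma \ref{exptight1} and Theorem \ref{exptight2}.
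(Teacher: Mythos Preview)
Your proof is correct and follows essentially the same route as the paper: localize via $\tau_{n,a}$ to reduce to the finite sum $Z_n^{a,\epsilon}$, invoke Garcia's finite-dimensional result (Theorem \ref{Gar_th}) for the latter, and pass to the limit using the exponentially-good-approximation machinery together with the exponential tightness established in Lemma \ref{exptight1} and Theorem \ref{exptight2}. The only cosmetic difference is that you run the two approximations sequentially (first $a\to\infty$ at fixed $\epsilon$, then $\epsilon\to0$), whereas the paper links them diagonally by setting $\epsilon=\epsilon_a$ with $\epsilon_a\,k(t,a)\le\delta/2$ and applies Lemma 3.14 of Feng--Kurtz once; the resulting rate-function formula $J_\beta=\sup_\eta\liminf_\epsilon\liminf_a J^{a,\epsilon}_\beta(B_\eta(\cdot))$ in the paper in fact reflects your two-step structure.
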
 

\begin{proof} The proof uses the same technique as in the proof of Lemma \ref{exptight1} and Theorem \ref{exptight2}. The same notation is used here as well. Let $\beta = (h_1\hdots,h_m)$ be a finite ordered subset of $\H$. We have to prove that
$\left\{(X_n,Y_n(\beta,\cdot),X_{n-}\cdot Y_n)\right\}$ satisfies a LDP with some rate function $J_{\beta}(\cdot,\cdot,\cdot)$.
\np
Let $\delta >0$, and choose $a>-\log \delta$. Define $Z_n^{a,\e}$ by (\ref{zna}) and $\tau_{n,a}$ as in Lemma \ref{exptight1}. Then from
(\ref{expapprox1}) ,
\begin{align}
\non
\limsup_n \f{1}{n} \log P\left[d((X_n,Y_n(\beta,\cdot)X^\epsilon_{n-}\cdot Y_n), (X_n,Y_n(\beta,\cdot),Z_n^{a,\epsilon})) > \delta\right]\\
 \leq \limsup_n \f{1}{n}  \log P (\tau_{n,a} < -\log \delta ) 
\leq  -a.
\label{LDPeq1}
\end{align}
Choose $t >0$ so that $e^{-t}< \delta/2$, and  then take $\epsilon \equiv \epsilon_a$ such that $\epsilon_a \ k(t,a) \leq \delta/2 $.  Using (\ref{xnyn4}) and the fact $\left\{Y_n\right\}$ is UET, we have
\begin{align}
\label{LDPeq2}
\limsup_n \f{1}{n} \log P \left[d((X_n,Y_n(\beta,\cdot),X^{\epsilon_a}_{n-}\cdot Y_n), (X_n,Y_n(\beta,\cdot),X_{n-}\cdot Y_n)) > \delta\right]
\leq -a.
\end{align}
Combining (\ref{LDPeq1}) and (\ref{LDPeq2}), it follows that
\begin{align*}
\limsup_n \f{1}{n} \log P \left[d((X_n,Y_n(\beta,\cdot),Z_n^{a,\epsilon_a}), (X_n,Y_n(\beta,\cdot),X_{n-}\cdot Y_n)) > \delta\right]
\leq -a.
\end{align*}
Now it follows from the finite dimensional result of Garcia \cite{Gar08} (also see Theorem \ref{Gar_th} in the Introduction) that $\left\{(X_n,Y_n(\beta,\cdot),Z_n^{a,\epsilon_a})\right\}$ satisfies a large deviation principle. Since $\delta \rt 0$ and $a \rt \infty$ implies that $\epsilon \rt 0$,  Lemma 3.14 of Feng and Kurtz proves that $\left\{(X_n,Y_n(\beta,\cdot),X_{n-}\cdot Y_n)\right\}$ satisfies a LDP with the rate function
\begin{align}
\label{rate3}
J_\beta(x,y^\beta,z) = \lim_{\eta\rt 0} \liminf_{\epsilon \rt 0} \liminf_{a\rt \infty}J^{a,\epsilon}_\beta(B((x,y^\beta,z),\eta)),
\end{align}
where $J^{ a,\epsilon}_\beta$ is the rate function for $\left\{(X_n,Y_n(\beta,\cdot),Z_n^{a,\e})\right\}$, and is given by
\begin{align}
\non
J^{a,\e}_{\beta}(x,y^{\beta},z) =& \inf_{y^a} \Big\{I_{(\alpha_a, \beta)}(x,(y^a, y^\beta)): z = \sum_{k=1}^{N_a}\psi_k^\e(x)\cdot y^a_k,\ (x,y^a, y^\beta) \in D_{\H\times\R^{N_a+m}}[0,\infty), \\
& \qquad \ (y^a,y^\beta) \ \T{finite variation} \Big\}.  
\end{align}
\end{proof}

\setcounter{equation}{0}
\renewcommand {\theequation}{\arabic{section}.\arabic{equation}}
\section{Identification of the rate function}
\label{sec:rateid}
The important starting point here is to work with a suitable collection, $\{Y(\phi_k,\cdot)\}$ or equivalently $\{\phi_k\}\subset \H$, which is enough to determine the LDP of $\{X_{n-}\cdot Y_n\}$. For a large class of separable Banach spaces, this can be achieved by working with a Scahuder basis $\left\{(\phi_k, p_k)\right\}$ (see Definition \ref{Schau}). For instance, if $\H$ is separable Hilbert space, any complete orthonormal system ${\phi_k}$ forms a Schauder basis.
For those separable Banach spaces not having a Schauder basis, we work with a  pseudo-basis $\left\{(\phi_k, p_k)\right\}$, with $\phi_k \in \H, p_k \in C(\H,\R)$. In other words, for every $h \in \H$, we have
\begin{align}
h &=  \sum_{k=1}^\infty p_k(h)\phi_k.
\label{Schau1}
\end{align}
Throughout the rest of the paper, whenever $\H$ does not have a Schauder basis, we work with a pseudo-basis $\left\{(\phi_k, p_k)\right\}$ satisfying \ref{2} of Theorem \ref{hom}. As stated in Theorem \ref{hom}, this always exists for separable Banach spaces.

\vs*{0.2cm}
\np
{\bf Relevant notations:} The following  notations  will be used in the subsequent discussions. 

\begin{itemize}
 \item $\alpha_k = (\phi_1,\hdots,\phi_k)$.
\item  $\alpha = (\phi_1,\phi_2,\hdots)$.
\item $P_{N}(h) = (p_1(h),\hdots,p_N(h))$.
\item $ y = (y_1,y_2,\hdots) \in D_{\R^\infty}[0,\infty)$.
\item $ y^{(k)}  = (y_1,\hdots,y_k) \in D_{\R^k}[0,\infty)$.
\item If $Y_n$ is an $\H^\#$-semimartingale then $Y_n(\alpha_k,\cdot) = (Y_n(\phi_1,\cdot),\hdots, Y_n(\phi_k,\cdot))$ and \\
$Y_n(\alpha,\cdot) = (Y_n(\phi_1,\cdot), Y_n(\phi_2,\cdot),\hdots)$.
\item $I_{\alpha_k}$ and $I_{\alpha}$ will denote the rate function of $\left\{(X_n,Y_n(\alpha_k,\cdot))\right\}$ and $\left\{(X_n,Y_n(\alpha,\cdot))\right\}$ respectively.
Similarly,  $J_{\alpha_k}$ and $J_{\alpha}$ will denote the rate function of $\left\{(X_n,Y_n(\alpha_k,\cdot),X_{n-}\cdot Y_n)\right\}$ and $\left\{(X_n,Y_n(\alpha,\cdot), X_{n-}\cdot Y_n)\right\}$ respectively.
\end{itemize}

\np
Since $X_n \in D_{\H}[0,\infty)$, we have
$X_n(t) \equiv \sum_{k=1}^\infty p_k(X_n(t))\phi_k.$
Let
$$X^{(m)}_n(t) \equiv \sum_{k=1}^m p_k(X_n(t))\phi_k.$$
Theorem \ref{LDP_mainth} and the result of Garcia \cite[Theorem 1.2]{Gar08}) shows that LDP holds for the tuples \\ $(X_n,Y_n(\alpha_k,\cdot),X^{(m)}_{n-}\cdot Y_n)$ and $(X_n,Y_n(\alpha,\cdot), X^{(m)}_{n-}\cdot Y_n)$, and their rate functions, denoted respectively by $J^m_{\alpha_k}$ and $J^m_{\alpha}$, are as follows.
For $k\geq m$,
\begin{equation}
\begin{aligned}
J^m_{\alpha_k}(x, y^{(k)},z) = 
\begin{cases} 
I_{\alpha_k}(x, y^{(k)})&  \ \  \T{if} \ z(t) = \sum_{j=1}^m p_j(x)\cdot y_j(t), \ \ y^{(k)} \ \T{finite variation},\\
  \infty &  \ \  \T{otherwise.}
\end{cases}
\end{aligned}
\label{findim1}
\end{equation}
By the  Dawson-Gartner  theorem \cite{DZ98}, 
\begin{align}
\label{findim2}
J^m_{\alpha}(x, y,z)& = \sup_{k}J^m_{\alpha_k}(x, y^{(k)},z).
\end{align}
Finally, as in \eqref{rate3}, we have
$$J_{\alpha}(x, y,z) = \lim_{\eta \rt 0}\liminf_{m\rt\infty}J^m_{\alpha}(B_{\eta}((x, y,z))),$$
where $B_{\eta}((x, y,z))$ is the ball of radius $\eta$ in $D_{\H\times\R^{\infty}\times\R}$.\\

The UET property of the sequence $\{Y_n\}$ has several interesting consequences. First of all, we show that if $I_\alpha(x, y) < \infty$, then 
$\y(t)\equiv\sum_{j} y_j(t)p_j$ exists as an element of $\H^*$, where  $\left\{(\phi_{k},p_{k})\right\}$ is a pseudo-basis of $\H$ satisfying \ref{2} of Theorem \ref{hom}. When $\H$ is a Hilbert space then notice that the $p_j \in \H^* = \H$ are orthogonal and the fact that $\sum_{j} y_j(t)p_j \in \H$ can be proved by simply showing that $\sum_{j}|y_j(t)|^2 < \infty$. 
For a general Banach space $\H$, the  proof goes by first showing  the convergence of $\sum_{j} y_j(t)p_j $ in $C(\H,\R)$ topologized by the family of seminorm $\left\{\sigma_C: C\subset \H \mbox{ \ compact}\right\} $, where the $\sigma_C$ are defined by
$$\sigma_C(f) = \sup_{h\in C} |f(h)|.$$
Let $\H^*_c$ denotes the subspace $\H^*$ of $C(\H,\R)$ equipped with the corresponding subspace topology. This is equivalent to the topology of uniform convergence on compacts and is weaker than the norm topology on $\H^*$ which is equivalent to the topology of uniform convergence on closed balls.

The next result which follows due to the UET property is that for every $t>0$, the mapping 
$$h \in \H \Rt \sum_{j} y_j(t)p_j(h)\equiv \y(t) $$
is linear whenever $I_\alpha(x, y) < \infty$. Finally, we show that $\y$ has finite total variation over any finite interval in the following sense. For $\y \in D_{\H^*_c}[0,\infty)$, define the total variation of $\y$ in the interval $[0,t)$ as 
$$T_t (\y) = \sup_{\sigma} \sum_i \|\y(t_i) -\y(t_{i-1})\|_{\H^*}.$$
If $\y \in D_{\H^*_c}[0,\infty)$ is such that $T_t (\y) < \infty$, then the integral $x\cdot \y$ can be defined as
$$ x\cdot \y = \lim_{\|\s\| \rt 0 }\sum_{i} \<x(t_i), \y(t_{i+1}) -\y(t_i)\>_{\H,\H^*},$$
where $\s =\left\{t_i\right\}$ is a partition of $\left[0,t\right]$, and $\|\s\|$ denotes the mesh of the partition $\s$ (see  \ref{vectint} in the Appendix).\\

\begin{theorem}
\label{UETprop}
Let $\H$ be a separable Banach space. Let  $\left\{(\phi_{k},p_{k})\right\}$ be a pseudo-basis of $\H$ satisfying \ref{2} of Theorem \ref{hom}, or a Schauder basis, if the latter exists. Suppose that $\{Y_n\}$ is a UET sequence and that for $(x,y) \in D_{\H\times \R^\infty}[0,\infty)$,  $I_\alpha(x,y)<\infty$. Then,
\begin{enumerate}[label={\rm (\roman*)}, leftmargin=*, align=right]
\item \label{yinh2} for every compact set $C \subset \H$,
$$\sup_{t\leq T}\sup_{h\in C}\le|\sum_{j=M}^{N}y_j(t)p_j(h)\ri| \rt 0, \ \ \ \mbox{as} \ \ M, N \rt \infty;$$
\item \label{ylinear} for every $t>0$, $\y(t) \in \H^*$, where $\y(t)\equiv\sum_{j} y_j(t)p_j$;
\item \label{yfv} for every $t>0$, $T_t(\y) < \infty$.
\end{enumerate}
\end{theorem}

\begin{remark}
Notice that by \ref{yinh2} of Theorem \ref{UETprop}, if $I_{\alpha}(x,y) < \infty$,  then  for each $t>0$, $\y(t) \in C(\H,\R)$, where
\begin{align}
 \label{ydef2}
\y(t) = \sum_{j} y_j(t)p_j.
\end{align}
In fact, from the conclusion of the theorem it follows that $\y$ is a cadlag function in the time variable, that is $\y \in D_{C(\H,\R)}[0,\infty).$
Part \ref{ylinear} of Theorem \ref{UETprop} tells that  if $I_\alpha(x,y)<\infty$, then  $\y \in D_{\H^*_c}[0,\infty)$.
\end{remark}

\np
In the Hilbert space setting, one can actually prove the stronger statement. Indeed, identifying $\H$ with $\H^*$ in this case, $p_k(\cdot) =\<\cdot,\phi_k\>$ can be identified with $\phi_k$, where $\{\phi_k\}$ forms the complete orthonormal basis of $\H$, and consequently we prove that if $I_{\alpha}(x,y)<\infty$, then $y = \sum_k y_k\phi_k \in D_{\H}[0,\infty)$ (which has a stronger topology than $ D_{\H^*_c}[0,\infty)$). The following theorem establishes this stronger statement when $\H$ is a separable Hilbert space. The key techniques of the proof are very similar (in fact, simpler) to that of Theorem \ref{UETprop}, and therefore we only prove the result in the more general Banach space setting.

\begin{theorem}
\label{yinh}
Let $\H$ be a separable Hilbert space with an orthonormal basis $\{\phi_k\}$. Suppose that $\{Y_n\}$ is a UET sequence. Suppose that for $(x, y) \in D_{\H\times \R^\infty}[0,\infty)$, $I_\alpha(x, y) < \infty$. Then
\begin{enumerate}[label={\rm (\roman*)}, leftmargin=*, align=right]
\item \label{yh}$\displaystyle{\sup_{t\leq T}\sum_{j}|y_j(t)|^2 < \infty, \ \ \ \T{for all} \ T>0}$;
\item \label{yconv}$\displaystyle{\sup_{t\leq T}\sum_{j=M}^N|y_j(t)|^2 \rt 0, \ \ \ \T{as } M, N\rt \infty}$;
\item \label{fv} for every $t>0$, $T_t(\y) < \infty$.
\end{enumerate}
\end{theorem}

\begin{proof} ({\bf Theorem \ref{UETprop}}) The proofs go by the method of contradiction. Specifically, in each case assuming that the conclusion to be not true, we arrive at a contradiction by showing that $I_\alpha(x, y) = \infty$.

\ref{yinh2} Let $\left\{(\phi_{k},p_{k})\right\}$ be a pseudo-basis of $\H$ satisfying \ref{2} of Theorem \ref{hom}.
Fix an $a>0$. For $T>0$, define $k(T,a)$ by (\ref{UET0}). If the result is not true, then there exist a $\rho < (k(T,a)+1)^{-1}$ and a compact set $C$ such that for all $N_0$, there exist $ N > M > N_0$ and  a $0<t<T$ such that
$$\sup_{h\in C}|\sum_{j=M}^{N}y_j(t)p_j(h)| > 2\rho.$$
Since $\sup_{h\in C}\|\sum_{k=1}^N p_k(h)\phi_k - h\|_\H \rt 0$, there exists an $N_0$ such that for all $M,N > N_0$
\begin{align}
\label{bd}
\sup_{h\in C}\|\sum_{k=M}^N p_k(h)\phi_k\|_\H < \rho^2. 
\end{align}
For this $N_0$, find $N> M > N_0$ such that
$$\sup_{t\leq T}\sup_{h\in C}|\sum_{j=M}^{N}y_j(t)p_j(h)| > 2\rho.$$
Next find a  $0 <t < T$ (depending on $M,N$ and $T$) and a $\gamma_t \in C$ such that  
\begin{align}
\label{SN1}
|\sum_{j=M}^N y_j(t)p_j(\gamma_t)| > \rho.
\end{align}
Without loss of generality assume that $t$ is a continuity point of $y_i, i=M,\hdots,N$.

By the continuity of the projection $\pi_t:D_{\R^\infty}[0,\infty) \Rt \R$ at $ y$, for every $\e>0$, there exists an $r>0$, such that
\[d( u, y)<r \ \ \RT \ \ |u_i(t) - y_i(t)| < \e, \ \ \ i=M,\hdots, N.\]
Choose $\e = \rho^2(\sup_{h\in C}\sum_{M\leq k\leq N}|p_k(h)|)^{-1}$.

Define
$$z(s) = \rho^{-2}\sum_{k=M}^N p_k(\gamma_t)1_{[0,t)}(s)\phi_k.$$
Observe that by (\ref{bd}), $\sup_{s\leq t}\|z(s)\|_\H \leq 1$. We claim that\\

\np
{\bf Claim:} $\left\{d(Y_n(\alpha,\cdot), y) < r\right\} \subset \left\{z\cdot Y_n(t) > k(t,a)\right\}$\\
{\it Proof of the claim:} Let $\w \in LHS$.
Then 
\begin{align*}
 z\cdot Y_n(\w,t) & =  \rho^{-2}\sum_{k=M}^N p_k(\gamma_t)Y_n(\phi_k,t)(\w)\\
& = \rho^{-2} \sum_{k=M}^N p_k(\gamma_t)y_k(t)+ \rho^{-2}p_k(\gamma_t)(Y_n(\phi_k,t)(\w) - y_k(t)).\\
\end{align*}
It follows from (\ref{SN1}) that
\begin{align*}
|z\cdot Y_n(\w,t) | & \geq \rho^{-1} - \rho^{-2}\e \sum_{M\leq k\leq N} |p_k(\gamma_t)| \\
&  \geq k(T,a)+1 - \rho^{-2}\e \sup_{h\in C}\sum_{M\leq k\leq N} |p_k(h)|\\
& \geq k(T,a) \ \ \ \ (\mbox{ by the choice of } \e).
\end{align*}
Therefore we get
\begin{align*}
P\left[(X_n,Y_n(\alpha,\cdot)) \in B_{r}(x, y)\right] & \leq P\left[d(Y_n(\alpha,\cdot), y) <r\right]\\
& \leq P\left[z\cdot Y_n(t) > k(T,a)\right]\\
& \leq P\left[\sup_{s\leq T }|z_-\cdot Y_n(s)| > k(T,a)\right].\\
\end{align*}
Hence, using the UET condition (\ref{UET0}), we find
\begin{align}\label{cont_step}
-I_{\alpha}(x, y) \leq \limsup_{n\rt\infty} \f{1}{n} \log P\left[(X_n,Y_n(\alpha,\cdot)) \in B_r(x, y)\right] \leq -a.
\end{align}
Since this is true for all $a$, $I_{\alpha}(x,y) =\infty$ and  we are done.

\ref{ylinear}
We have to prove that 
\begin{enumerate}
\item $\y(t)(g+h) = \y(t)(g) + \y(t)h, \ \ g,h \in \H.$
\item $\y(t)(ch) = c\y(t)(h), \ \ c \in \R, h\in \H.$
\end{enumerate}
We prove the first claim and the proof of the second will be similar. Note that once we prove that $\y(t)$ is a linear functional, the fact that it is a continuous linear functional will follow from the previous part.\\

 If the conclusion is false, then there exist $g, h \in \H$ such that $\y(t)(g+h) \neq \y(t)(g) + \y(t)(h)$. Fix an $a>0$.  Then there exists a $\kappa < (k(t,a)+1)^{-1}$
\begin{align}
 \label{cont1}
|\y(t)(g+h) - \y(t)(g) - \y(t)(h)|> 2\kappa >0.
\end{align}
Since 
$$\sum_{j=1}^N \left[p_j(g+h)\phi_j - p_j(g)\phi_j-p_j(h)\phi_j\right] \rt 0, \ \ \mbox{as} \ \ N \rt \infty,$$
find an $N_0$ such that for all $N>N_0$ 
$$\|\sum_{j=1}^N \left[p_j(g+h)\phi_j - p_j(g)\phi_j-p_j(h)\phi_j\right]\|_{\H} < \kappa^2.$$
For this $N_0$ find an $N>N_{0}$ such that 
\begin{align}
 \label{cont2}
|\sum_{j=1}^N\left[y_j(t)p_j(g+h) - y_j(t)p_j(g) - y_j(t)p_j(h)\right]| > \kappa.
\end{align}
Without loss of generality assume that $t$ is a continuity point of $(y_1,\hdots,y_N)$.
By the continuity of the projection $\pi_t:D_{\R^N}[0,\infty) \Rt \R$ at $ y$, for every $\e>0$, there exists an $r>0$ such that
\[d( u, y)<r \ \ \RT \ \ |u_i(t) - y_i(t)| < \e, \ \ \ i=1,\hdots, N.\]
Choose $\e = \kappa^2(\sum_{1\leq k\leq N} |p_k(g+h) - p_k(g) -p_k(h)|)^{-1}$.

Define
$$z(s) = \kappa^{-2}\sum_{k=1}^N\left[ p_k(g+h) - p_k(g) -p_k(h)\right]1_{[0,t)}(s)\phi_k.$$
Observe that $\sup_{s\leq t}\|z(s)\|_\H \leq 1$. We claim that\\

\np
{\bf Claim:} $\left\{d(Y_n(\alpha,\cdot), y) < r\right\} \subset \left\{z\cdot Y_n(t) > k(t,a)\right\}$\\
{\it Proof of the claim:} Let $\w \in LHS$.
Then 
\begin{align*}
 z\cdot Y_n(\w,t) & =  \kappa^{-2}\sum_{k=1}^N \left[ p_k(g+h) - p_k(g) -p_k(h)\right]Y_n(\phi_k,t)(\w)\\
& = \kappa^{-2} \sum_{k=1}^N \left[ p_k(g+h) - p_k(g) -p_k(h)\right]y_k(t) \\
& \hspace{.7cm} +\kappa^{-2}\sum_{k=1}^N\left[ p_k(g+h) - p_k(g) -p_k(h)\right](Y_n(\phi_k,t)(\w) - y_k(t)).\\
\end{align*}
It follows from (\ref{cont2}) that
\begin{align*}
|z\cdot Y_n(\w,t) | & \geq \kappa^{-1} - \kappa^{-2}\e \sum_{1\leq k\leq N}|p_k(g+h) - p_k(g) -p_k(h)| \\
&   \geq k(t,a)+1 - \kappa^{-2}\e \sum_{1\leq k\leq N} |p_k(g+h) - p_k(g) -p_k(h)|\\
& \geq k(t,a) \ \ \ \ (\mbox{ by the choice of } \e)
\end{align*}
The rest of the proof is same as that of  \ref{yinh2}.

\ref{yfv}
Fix an $a>0$. If the assertion is not true, then we can find a partition $\left\{t_i\right\}_{i=1}^p$ such that
\[\sum_{i=1}^p\|\y(t_{i}) - \y(t_{i-1})\|_{\H^*} > (1+\kappa)(K+3),\]
where $\kappa>0$ and $K =k(t,a)$ is as defined in (\ref{UET0}) . 
Then for each $i=1,\hdots p$, find $\gamma_{i-1} \in \H$ with $\|\gamma_{i-1}\|_\H \leq 1$, such that
\[\sum_{i=1}^p\|\y(t_{i})(\gamma_{i-1}) - \y(t_{i-1})(\gamma_{i-1})\|_{\H^*} > (1+\kappa)(K+2).\]
From  the definition $\y$ there exists an $N>0$ such that
$$\sum_{i=1}^p|\sum_{j=1}^N(y_j(t_{i})-y_j(t_{i-1}))p_j(\gamma_{i-1})| > (1+\kappa)(K+1).$$
Since $\sum_j p_j(\gamma_{i-1})\phi_j = \gamma_{i-1}$, choose $N$ large enough so that
\begin{align}
\label{1}
\|\sum_{j=1}^N p_j(\gamma_{i-1})\phi_j\|_{\H} \leq (1+\kappa)\|\gamma_{i-1}\|_{\H} \leq (1+\kappa), \ \ i =1,\hdots,p.
\end{align}
Without loss of generality, assume that $ y =(y_1,y_2,\hdots)$ is continuous at $\left\{t_i\right\}_{i=1}^p$.
By the continuity of the projections $\pi_{t_i}:D_{\R^\infty}[0,\infty) \Rt \R^N$ at $ y$, for every $\e>0$, there exists an $r>0$, such that
\[d( u, y)<r \ \ \RT \ \ |u_k(t_i) - y_k(t_i)| < \e, \ \ \ k=1,\hdots, N, i=1,\hdots,p.\]
Choose $\e = (1+\kappa)(2p\sum_{k\leq N}B_k)^{-1}$, where $B_k= \max_i \{|p_k(\g_{i-1})|\}$. Define 
\[z(s) = (1+\kappa)^{-1}\sum_{i=1}^p\rho_{i-1}\sum_{j=1}^Np_j(\gamma_{i-1})1_{[t_{i-1},t_i)}(s)\phi_j,\]
where $\rho_{i-1} =\mbox{sgn}\left[\sum_{j=1}^N{p_j(\gamma_{i-1})(y_j(t_i) -y_j(t_{i-1}))}\right]$.\\
Notice that (\ref{1}) implies $\|z(s)\| \leq 1$. As before, we claim 
$$\left\{d(Y_n(\alpha,\cdot), y) < r\right\} \subset \left\{z\cdot Y_n(t) > K\right\}.$$
To see this notice if $\omega \in LHS$, then
\begin{align*}
z\cdot Y_n(t)(\omega)&=  (1+\kappa)^{-1}\sum_{i=1}^p\rho_{i-1}\sum_{j=1}^N p_j(\gamma_{i-1})(Y_n(\phi_j,t_i) - Y_n(\phi_j,t_{i-1}) )\\
& = (1+\kappa)^{-1}\sum_{i=1}^p\rho_{i-1}\sum_{j=1}^N p_j(\gamma_{i-1})(y_j(t_i) - y_j(t_{i-1}))\\
& \hspace{.7cm} + (1+\kappa)^{-1}\sum_{i=1}^p\rho_{i-1}\sum_{j=1}^N p_j(\gamma_{i-1})(Y_n(\phi_j,t_i)- y_j(t_{i})) \\
&\hspace{.7cm}- (1+\kappa)^{-1}\sum_{i=1}^p\rho_{i-1}\sum_{j=1}^N p_j(\gamma_{i-1})(Y_n(\phi_j,t_{i-1}) - y_j(t_{i-1})).\\
\end{align*}
Hence
\begin{align*}
|z\cdot Y_n(t)(\omega)|& \geq (1+\kappa)^{-1}\sum_{i=1}^p|\sum_{j=1}^N p_j(\gamma_{i-1})(y_j(t_i) - y_j(t_{i-1}))| \\
&\hspace{.7cm}- (1+\kappa)^{-1}\sum_{i=1}^p\rho_{i-1}\sum_{j=1}^N p_j(\gamma_{i-1})|Y_n(\phi_j,t_i)- y_j(t_{i})| \\
& \hspace{.7cm}- (1+\kappa)^{-1}\sum_{i=1}^p\rho_{i-1}\sum_{j=1}^N p_j(\gamma_{i-1})|Y_n(\phi_j,t_{i-1}) - y_j(t_{i-1})|\\
& \geq K+1 - 2(1+\kappa)^{-1}\epsilon p\sum_{j=1}^NB_j = K.
\end{align*}
Again, the rest of the proof is similar to that of \ref{yinh2}. 

\end{proof}

Let 
\begin{align}
 \label{DHdef}
\SC{D} = \left\{\y\in D_{\H^*_c}[0,\infty): T_t(\y) < \infty, \ \mbox{ for all } \ t>0\right\}.
\end{align}

\begin{remark}
Part \ref{yfv} of Theorem \ref{UETprop} indicates that if for  $(x, y) \in D_{\H\times \R^\infty}[0,\infty)$,  $I_\alpha(x, y) < \infty$, then $\y \in \SC{D}$, where $\y$ is defined by (\ref{ydef2}).
\end{remark}

\np
For  $g\in D_{\R^k}[0,\infty)$, define
\begin{align*}
g_\delta (t) = \sum_{k}g(\tau_k^\delta)1_{[\tau_k^\delta,\tau_{k+1}^\delta)}(t),
\end{align*}
where the $\tau_k^\delta$ are defined by:
\begin{align*}
\tau_0^\delta & = 0,\\
\tau_{k+1}^\delta & = \inf\left\{s>\tau_k^\delta: |g(s) - g(\tau_k^\delta)| >\delta\right\}.
\end{align*}
Clearly, $\sup_t |g_\delta(t) - g(t)| \leq \delta$. For $y \in D_{\R^k}[0,\infty) $, note that
\[g_\delta\cdot y(t) = \sum_k g(\tau_k^\delta)^T(y(\tau_{k+1}^\delta\wedge t) - y(\tau_k^\delta\wedge t)). \]

\vspace{.5cm}
\np
Recall that for $h\in \H$, the notation $P_N(h)$ was defined at the beginning of the Section \ref{sec:rateid}.
\begin{lemma}
\label{approx0}
For $\eta>0$ and $a>0$, there exist sufficiently small $\delta>0$ and sufficiently large  $N>0$ such that
\[\limsup_{n\rt\infty}\f{1}{n}\log P\left[d(X_{n-}\cdot Y_n, (P_N(X_{n-}))_\delta\cdot Y_n(\alpha_N,\cdot))>2\eta\right] \leq -a.\]
\end{lemma}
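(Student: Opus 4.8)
I would interpolate between $X_{n-}\cdot Y_n$ and $(P_N(X_{n-}))_\delta\cdot Y_n(\alpha_N,\cdot)$ through the truncated stochastic integral $X^{(N)}_{n-}\cdot Y_n=\sum_{j=1}^{N}p_j(X_{n-})\cdot Y_n(\phi_j,\cdot)$, where $X^{(N)}_n(t):=\sum_{j=1}^{N}p_j(X_n(t))\phi_j$; observe that $X^{(N)}_{n-}\cdot Y_n$ is precisely the ``undiscretized'' form of $(P_N(X_{n-}))_\delta\cdot Y_n(\alpha_N,\cdot)$ and $X_{n-}\cdot Y_n$ the ``untruncated'' form of $X^{(N)}_{n-}\cdot Y_n$. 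The plan is to show that, after first choosing $N$ and then $\delta$, each of
\[
d\big(X_{n-}\cdot Y_n,\,X^{(N)}_{n-}\cdot Y_n\big),\qquad
d\big(X^{(N)}_{n-}\cdot Y_n,\,(P_N(X_{n-}))_\delta\cdot Y_n(\alpha_N,\cdot)\big)
\]
is $<\eta$ off an event $E_n$ with $\limsup_n\tfrac1n\log P(E_n)\le-a$; the triangle inequality for $d$ together with $\tfrac1n\log(p+q)\le\tfrac1n\log2+\tfrac1n\log(p\vee q)$ then yields the assertion. I fix once and for all a $T$ with $e^{-T}<\eta/2$, so that, as in (\ref{xnyn3}), $d(u,v)\le\sup_{s\le T}|u(s)-v(s)|+e^{-T}$ reduces both estimates to bounding sup-norm errors on $[0,T]$ by $\eta/2$, and these will be produced by the UET property (\ref{UET0}), with $k(T,a)$ its constant.

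\emph{Truncation step.} Since $\{(X_n,Y_n)\}$ is exponentially tight, $\{X_n\}$ satisfies the exponential compact containment condition, so there are a compact $K\subset\H$ and the stopping time $\tau_n:=\inf\{s:X_n(s)\notin K\}$ with $\limsup_n\tfrac1n\log P[\tau_n\le T]\le-a$, as in (\ref{tna}). By property \ref{2} of Theorem \ref{hom} the pseudo-basis partial sums converge uniformly on $K$, i.e.\ $\rho_N:=\sup_{h\in K}\big\|h-\sum_{j=1}^{N}p_j(h)\phi_j\big\|_\H\to0$; fix $N$ with $\rho_N\,k(T,a)<\eta/2$. The stopped process $(X_n-X^{(N)}_n)^{\tau_n-}$ lies in the closed $\rho_N$-ball of $\H$ at all times, hence $\rho_N^{-1}(X_n-X^{(N)}_n)^{\tau_n-}\in\mathcal{S}^n$, and UET gives $\sup_{s\le T}\big|(X_n-X^{(N)}_n)^{\tau_n-}\cdot Y_n(s)\big|\le\rho_N\,k(T,a)$ off an event of rate $\le-a$. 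As in the argument around (\ref{eq1}), this stopped integral equals $(X_n-X^{(N)}_n)_{-}\cdot Y_n=X_{n-}\cdot Y_n-X^{(N)}_{n-}\cdot Y_n$ on $[0,\tau_n)$, so on $\{\tau_n>T\}$ the two coincide on $[0,T]$; therefore $d(X_{n-}\cdot Y_n,X^{(N)}_{n-}\cdot Y_n)<\eta$ off an event of rate $\le-a$.

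\emph{Discretization step.} With $N$ fixed, the $\R^N$-valued path $P_N(X_{n-})$ differs from $(P_N(X_{n-}))_\delta$ --- a step path with finitely many breakpoints $\tau^\delta_k<T$ --- by at most $\delta$ in sup norm; hence the difference $D_n:=X^{(N)}_{n-}\cdot Y_n-(P_N(X_{n-}))_\delta\cdot Y_n(\alpha_N,\cdot)$ is the stochastic integral against $Y_n$ of an $\H$-valued simple process $\Xi_n=\sum_{j=1}^{N}\xi_{n,j}\phi_j$ with $\sup_s\|\Xi_n(s)\|_\H\le\delta\,C_N$, $C_N:=\sum_{j=1}^{N}\|\phi_j\|_\H$. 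Choosing $\delta>0$ with $\delta\,C_N\,k(T,a)<\eta/2$, the process $(\delta C_N)^{-1}\Xi_n$ lies in $\mathcal{S}^n$, so UET gives $\sup_{s\le T}|D_n(s)|\le\delta\,C_N\,k(T,a)<\eta/2$ off an event of rate $\le-a$, whence $d\big(X^{(N)}_{n-}\cdot Y_n,(P_N(X_{n-}))_\delta\cdot Y_n(\alpha_N,\cdot)\big)<\eta$ off such an event. Combining the two steps proves the lemma.

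\emph{Main obstacle.} The delicate part is the truncation step: for a general separable Banach space the pseudo-basis partial sums approximate the identity uniformly only on compact sets, which forces one to localize $X_n$ to a compact set through the exponential compact containment condition and to work with the stopped process before the UET estimate can be invoked (this is the role of property \ref{2} of Theorem \ref{hom}). The order of quantifiers is equally crucial: $N$ must be chosen before $\delta$, since $C_N=\sum_{j\le N}\|\phi_j\|_\H$ may diverge.
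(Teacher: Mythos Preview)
Your proof is correct and follows essentially the same route as the paper's: interpolate through $X^{(N)}_{n-}\cdot Y_n=S_N(X_{n-})\cdot Y_n$, use exponential compact containment plus uniform convergence of the pseudo-basis partial sums on compacts for the truncation step, and the $\delta$-discretization plus UET for the second step. The only differences are cosmetic --- you make the constant $C_N=\sum_{j\le N}\|\phi_j\|_\H$ explicit in the discretization step and keep the choices of $N$ and $\delta$ separate, whereas the paper sets $\epsilon=\delta$ and handles both errors with a single constraint $\epsilon\,k(t,a/3)<\eta/3$; also, your claim that $\rho_N^{-1}(X_n-X_n^{(N)})^{\tau_n-}\in\mathcal S^n$ is literally false since $X_n$ is not a finite sum, but the paper uses UET in the same extended sense (the bound passes to general bounded cadlag integrands via the approximation in Theorem \ref{SIdef}), so this is not a genuine gap.
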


\begin{proof}
Since $\left\{X_n\right\}$ is exponentially tight, it satisfies the exponential compact containment condition. Thus,  there exists  a compact set $K_a$, such that 
\[\limsup_n \f{1}{n} \log P\left[X_n(t) \notin K_a, \ \T{for some} \  t < a \right] \leq -a/3.\]
Let $\tau_{n,a} = \inf\left\{s: X_n(s) \notin K_a\right\}$. Then notice that
\[P\left[\tau_{n,a} < a\right] = P\left[X_n(t) \notin K_a, \ \T{for some} \  t < a \right]. \]
Hence, 
\begin{align}
\limsup_n \f{1}{n} \log P\left[\tau_{n,a} < a\right] \leq -a/3.
\label{tna1}
\end{align}

\np
Since  $S_N(x) \equiv \sum_{k=1}^N p_k(x)\phi_k\rt x$ as $N\rt \infty$  uniformly over compact sets (see Lemma \ref{cptconv} in Appendix),
 choose $N>0$ such that
$\sup_{x\in K_a}\|S_N(x) -x\| \leq \e$, where $\e>0$ is to be specified later.\\
Therefore
\begin{align}
\sup_{s<\tau_{n,a}}\|S_N(X_n(s)) -X_n(s)\| \leq \e.
\label{tauna}
\end{align}
Observe that,
\begin{align}
\non
d(S_N(X_{n-})\cdot Y_n, X_{n-}\cdot Y_n)& \leq \int_0^\infty  e^{-u}\sup_{t\geq 0} |S_N(X_{n-})\cdot Y_n(t\wedge u) - X_{n-}\cdot Y_n(t\wedge u)| \wedge 1 \ du\\ 
\non
& \leq \int_0^{\tau_{n,a}}(\hdots) + \int_{\tau_{n,a}}^\infty (\hdots) \\
& \leq \sup_{s<\tau_{n,a}} |S_N(X_{n-})\cdot Y_n(s) - X_{n-}\cdot Y_n(s)|+  e^{-\tau_{n,a}}.
\label{equn1}
\end{align}
Note that by our notation,
\[P_N(X_{n-})\cdot Y_n(\alpha_N,\cdot) = S_N(X_{n-})\cdot Y_n =\sum_{j=1}^N \int p_j(X_{n}(s-)) \ dY_k(\phi_k,s). \]
Let $t>0$ be such that $e^{-t}<\eta$, and notice that for any $t>0$, by  the definition of the metric $d$ in \cite[Page 117]{EK86},
\begin{align}
d(P_N(X_{n-})\cdot Y_n(\alpha_N,\cdot), (P_N(X_{n-}))_\delta\cdot Y_n(\alpha_N,\cdot)) & \leq \sup_{s\leq t}|(P_N(X_{n-}) - (P_N(X_{n-}))_\delta)\cdot Y_n(\alpha_N,\cdot)(s)|
  + e^{-t}.
\label{equn2}
\end{align}
Thus, using (\ref{equn1}) and (\ref{equn2}), 
\begin{align*}
d(X_{n-}\cdot Y_n,(P_N(X_{n-}))_\delta\cdot Y_n(\alpha_N,\cdot))& \leq \sup_{s<\tau_{n,a}} |S_N(X_{n-})\cdot Y_n(s) - X_{n-}\cdot Y_n(s)|+  e^{-\tau_{n,a}}\\
& \ \ \ + \sup_{s\leq t}|(P_N(X_{n-}) - (P_N(X_{n-}))_\delta)\cdot Y_n(\alpha_N,\cdot)(s)| + e^{-t}.
\end{align*}
Hence,
\begin{align*}
\left\{d(X_{n-}\cdot Y_n, (P_N(X_{n-}))_\delta\cdot Y_n(\alpha_N,\cdot))>2\eta\right\} &\subset \left\{\sup_{s<\tau_{n,a}} |S_N(X_{n-})\cdot Y_n(s) - X_{n-}\cdot Y_n(s)|>\eta/3\right\}\\
& \hspace{.4cm}\cup\left\{e^{-\tau_{n,a}}>\eta/3\right\}\\
& \hspace{.4cm} \cup \le\{\sup_{s\leq t}|(P_N(X_{n-}) - (P_N(X_{n-}))_\delta)\cdot Y_n(\alpha_N,\cdot)(s)|
 >\eta/3\ri\}.
\end{align*}
Let $a>-\log \eta/3$. Choose $\e=\delta$ such that
\[\e k(t,a/3) <\eta/3,\]
where $k(t,a)$ was defined in \eqref{UET0}.
Then using (\ref{tauna}) and the uniform exponential tightness of $\left\{Y_n\right\}$ (\ref{UET0}), it follows that
\[\limsup_{n\rt\infty}\f{1}{n}\log P\left[d(X_{n-}\cdot Y_n, (P_N(X_{n-}))_\delta\cdot Y_n(\alpha_N,\cdot))>\eta\right] \leq -a.\]
\end{proof}

\begin{theorem} \label{th:rateidH}
Suppose for $(x, y,z) \in D_{\H\times\R^{\infty}\times\R}[0,\infty)$, $\y$ defined by (\ref{ydef2})  $\in \SC{D}$. 
\begin{enumerate}[label={\rm (\roman*)}, leftmargin=*, align=right]
\item \label{assert1} If $z \neq x\cdot \y$, then
\[J_\alpha(x, y,z) =\infty.\]
\item \label{assert2} If $z = x\cdot \y$, then
\[J_\alpha(x, y,z) =I_\alpha(x, y).\]
\end{enumerate}
\end{theorem}

\begin{proof}
\ref{assert1}
Fix an $a > 0$. By the hypothesis, there exists an $\eta>0$ such that 
\begin{align}
\label{eta}
d(z,x\cdot\y) > 4\eta >0.
\end{align}
For this $\eta$, choose $N>0$ and $\delta>0$ such that the conclusion of Lemma \ref{approx0} is satisfied. In fact, choose $N>0$ large enough, and $\delta>0$ small enough, so that
\[d(x\cdot \y, (P_N(x))_\delta\cdot \y(\alpha_N,\cdot)) <2\eta,\]
where $\y(\alpha_N,\cdot) \equiv (\y(\phi_1,\cdot),\hdots,\y(\phi_N,\cdot)) = (y_1,\hdots,y_N)=  y^{(N)}$.
Then,
\[d(z,(P_N(x))_\delta\cdot \y(\alpha_N,\cdot)) > 2\eta.\]
Define a function $G^\delta: D_{\H\times\R^\infty}[0,\infty) \rt D_{\R}[0,\infty)$ by
\[G^\delta(h, u) = (P_N(h))_\delta\cdot  u^{(N)}. \]
Take $\delta$ smaller (if necessary) so that $G^\delta$ is continuous at $(x, y)$.\\
Then there exists an $r>0$ such that
\begin{align}
\label{ball}
G^\delta(B_{r}(x,y)) \subset B_{\eta}(x\cdot\y).
\end{align}
Next, observe that
\[\left\{(X_n, Y_n(\alpha,\cdot)) \in B_{r}(x, y), X_{n-}\cdot Y_n \in B_{\eta}(z), d((P_N(X_{n-}))_\delta\cdot Y_n(\alpha_N,\cdot),X_{n-}\cdot Y_n) \leq 2\eta\right\} = \emptyset,\]
as otherwise, by (\ref{ball})
\begin{align*}
d((P_N(X_{n-}))_\delta\cdot Y_n(\alpha_N,\cdot), x\cdot\y) &<\eta. \\
\end{align*}
It follows that
\begin{align*}
d(z,x\cdot\y)& < d(z,X_{n-}\cdot Y_n) + d((P_N(X_{n-}))_\delta\cdot Y_n(\alpha_N,\cdot),X_{n-}\cdot Y_n)\\
& \hspace{.4cm}+ d((P_N(X_{n-}))_\delta\cdot Y_n(\alpha_N,\cdot), x\cdot\y) \ \ < 4\eta,
\end{align*}
which is a contradiction to (\ref{eta}). Thus, 
\begin{align*}
\{(X_n, Y_n(\alpha,\cdot)) \in B_{r}(x, y), X_{n-}\cdot Y_n \in B_{\eta}(z)\}&=\{(X_n, Y_n(\alpha,\cdot)) \in B_{r}(x, y), X_{n-}\cdot Y_n \in B_{\eta}(z),\\
& \ \ \ \ d((P_N(X_{n-}))_\delta\cdot Y_n(\alpha_N,\cdot),X_{n-}\cdot Y_n) > 2\eta\}.
\end{align*}
By Lemma \ref{approx0} 
\begin{align*}
-J_{\alpha}(x, y,z) &\leq \limsup_{n\rt\infty}\f{1}{n}\log P\left[(X_n, Y_n(\alpha,\cdot)) \in B_{r}(x, y), X_{n-}\cdot Y_n \in B_{\eta}(z)\right]\\
&\leq \limsup_{n\rt\infty}\f{1}{n} \log P\left[d((P_N(X_{n-}))_\delta\cdot Y_n(\alpha_N,\cdot),X_{n-}\cdot Y_n) > 2\eta\right] \leq -a.
\end{align*}
Since this is true for all  $a>0$, we are done.

\vspace{.3cm}
\ref{assert2}
 Since $T_t(\y)<\infty$ and $z = x\cdot \y$, for every $\eta>0$ we can find $N>0$ such that
\[d((x, y,z),(x, y,P_N(x)\cdot y^{(N)})) <\eta.\]
From (\ref{findim1}) and (\ref{findim2}),
\[J_\alpha^N(x, y,P_N(x)\cdot y^{(N)})) = I_\alpha(x, y).\]
Recall that
$$J_{\alpha}(x, y,z) = \lim_{\eta\rt 0}\liminf_{m\rt\infty}J^m_{\alpha}(B_{\eta}((x, y,z))).$$
It follows that
$$J_{\alpha}(x, y,z) \leq I_\alpha(x, y).$$
Since the reverse inequality is always true by the contraction principle, the theorem follows. 
\end{proof}

We summarize our results in the following theorem.
\begin{theorem}
\label{th_ldp_si}
Let $\H$ be a separable Banach space and $\left\{(\phi_{k},p_{k})\right\}$  a pseudo-basis of $\H$ satisfying \ref{2} of Theorem \ref{hom}, or a Schauder basis, if the latter exists. Let $\left\{Y_n\right\}$ be a sequence of $\left\{\SC{F}^n_t\right\}$-adapted, standard $\H^\#$-semimartingales and $\left\{X_n\right\}$  a sequence of cadlag, adapted $\H$-valued processes. Assume that $\left\{Y_n\right\}$ is UET. If $\left\{(X_n,Y_n)\right\}$ satisfies a LDP in the sense of Definition \ref{LDP_def}, then $\left\{(X_n, Y_n, X_{n-}\cdot Y_n)\right\}$ also satisfies a LDP. The associated rate function of the tuple $\left\{(X_n,Y_n(\alpha,\cdot), X_{n-}\cdot Y_n)\right\}$ can be expressed as
\begin{eqnarray}
J_{\alpha}(x, y, z)& = 
\begin{cases}
I_{\alpha}(x,y), & z = x\cdot \y, \ \ \  \y \in\SC{D};\\
\infty, & \mbox{otherwise,}
\end{cases}
\end{eqnarray}
where $\alpha=(\phi_1,\phi_2,\hdots,), Y_n(\alpha,\cdot) = (Y_n(\phi_1,\cdot), Y_n(\phi_2,\cdot),\hdots)$, $\y$ and $\SC{D}$ are defined by (\ref{ydef2}) and (\ref{DHdef}) respectively.
\end{theorem}

\subsection{LDP results for stochastic integrals: $(\LL, \hat{\H})^\#$-semimartingale}
Analogous to Theorem \ref{LDP_mainth}, we have our theorem on the large deviation principle of the stochastic integrals with respect to a sequence of $(\LL, \hat{\H})^\#$-semimartingale. The development of the proof is almost exactly similar to that of Theorem \ref{LDP_mainth}. 
\begin{theorem}
\label{LDP_mainth2}
Let $\left\{Y_n\right\}$ be a sequence of $\left\{\SC{F}^n_t\right\}$-adapted, standard $(\LL, \hat{\H})^\#$-semimartingales and $\left\{X_n\right\}$ be a sequence of cadlag, adapted $\hat{\H}$- valued processes. Assume that $\left\{Y_n\right\}$ is UET. If $\left\{(X_n,Y_n)\right\}$ satisfies a LDP in the sense of Definition \ref{LDP_def} with the  rate function family $\left\{I_\alpha:\alpha \in A\right\}$, then $\left\{(X_n, Y_n, X_{n-}\cdot Y_n)\right\}$ also satisfies a LDP.
\end{theorem}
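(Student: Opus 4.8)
The plan is to repeat, almost verbatim, the argument proving Theorem~\ref{LDP_mainth}, with the partition--of--unity approximation of $\hat{\H}$--valued cadlag processes from Section~\ref{lhsharp} in place of that of $\H$--valued ones, and the metric on $D_\LL[0,\infty)$ in place of the metric on $D_\R[0,\infty)$. Fix a finite ordered subset $\beta=(h_1,\dots,h_m)$ of $\H$ and write $Y_n(\beta,\cdot)=(Y_n(h_1,\cdot),\dots,Y_n(h_m,\cdot))$; by Definition~\ref{LDP_def} it suffices to produce an LDP for $\left\{(X_n,Y_n(\beta,\cdot),X_{n-}\cdot Y_n)\right\}$ in $D_{\hat{\H}\times\R^m\times\LL}[0,\infty)$. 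For $\e>0$ let $X_n^\e(t)=\sum_k\psi_k^\e(X_n(t))\zeta_k=\sum_{i,j}c_{ij}^\e(X_n(t))f_ig_j$ be the approximation of $(\ref{xep})$, so that $\|X_n^\e(s)-X_n(s)\|_{\hat{\H}}\le\e$ for all $s$, and recall $X^\e_{n-}\cdot Y_n(t)=\sum_i f_i\sum_j\int_0^t c_{ij}^\e(X_n(s-))\,dY_n(g_j,s)$.

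Next I would localize. Since $\left\{(X_n,Y_n)\right\}$ is exponentially tight, $\left\{X_n\right\}$ satisfies the exponential compact containment condition; choose a compact $K_a\subset\hat{\H}$ with $\limsup_n\f{1}{n}\log P\left[X_n(t)\notin K_a,\ \T{for some}\ t<a\right]\le -a$ and set $\tau_{n,a}=\inf\left\{s:X_n(s)\notin K_a\right\}$, so that $\limsup_n\f{1}{n}\log P\left[\tau_{n,a}<a\right]\le -a$. By the last assertion of Lemma~\ref{pou} only $N_a<\infty$ of the $\psi_k^\e$ are active on $K_a$, hence only finitely many pairs $(i,j)$ and finitely many of the $f_i,g_j$ occur on $[0,\tau_{n,a})$, say $f_1,\dots,f_{M_a}$ and $g_1,\dots,g_{L_a}$; put
\[
Z_n^{a,\e}(t)=\sum_{i=1}^{M_a}f_i\sum_{j=1}^{L_a}\int_0^t c_{ij}^\e(X_n(s-))\,dY_n(g_j,s),
\]
which coincides with $X^\e_{n-}\cdot Y_n$ on $[0,\tau_{n,a})$. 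Exactly as in $(\ref{xnyn1})$--$(\ref{expapprox1})$, now with the $D_\LL$--metric, $d\bigl((X_n,Y_n(\beta,\cdot),X^\e_{n-}\cdot Y_n),(X_n,Y_n(\beta,\cdot),Z_n^{a,\e})\bigr)\le e^{-\tau_{n,a}}$, so this distance exceeds $\delta$ with $\limsup$--probability at most $e^{-na}$ once $a>-\log\delta$. Since the bound $(\ref{UET2})$ extends from $\SC{S}^n$ to every adapted cadlag $\hat{\H}$--valued $Z$ with $\sup_{s\le t}\|Z(s)\|_{\hat{\H}}\le 1$ (the integral for general integrands being the limit of those for simple ones, as in Theorem~\ref{SIdef}), applying it to $(X_n-X_n^\e)/\e$ gives $\limsup_n\f{1}{n}\log P\bigl[\sup_{s\le t}\|(X_n-X_n^\e)\cdot Y_n(s)\|_\LL>\e\,k(t,a)\bigr]\le -a$; together with the $D_\LL$--analogue of $(\ref{xnyn4})$ this yields, for $t$ with $e^{-t}<\delta/2$ and $\e\equiv\e_a$ with $\e_a k(t,a)\le\delta/2$, that $\limsup_n\f{1}{n}\log P\bigl[d(X^{\e_a}_{n-}\cdot Y_n,X_{n-}\cdot Y_n)>\delta\bigr]\le -a$. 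Hence $\left\{(X_n,Y_n(\beta,\cdot),Z_n^{a,\e_a})\right\}$ is an exponentially good approximation of $\left\{(X_n,Y_n(\beta,\cdot),X_{n-}\cdot Y_n)\right\}$ as $a\to\infty$ (so that $\e_a\to 0$).

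It then remains to establish an LDP for the approximants. Each $c_{ij}^\e(X_n(\cdot))$ is a continuous functional of $X_n$, so by the LDP hypothesis on $\left\{(X_n,Y_n)\right\}$ and the contraction principle the tuple $\bigl(X_n,(c_{ij}^\e(X_n(\cdot)))_{(i,j)},(Y_n(g_j,\cdot))_{j\le L_a},Y_n(\beta,\cdot)\bigr)$ satisfies an LDP, while the finite family $(Y_n(g_j,\cdot))_j$ is uniformly exponentially tight (specialize $(\ref{UET2})$ to integrands of the form $\xi f_ig_j$). The multidimensional version of Garcia's theorem (Theorem~\ref{Gar_th}) then gives an LDP for this tuple jointly with the $\R^{M_aL_a}$--valued stochastic integrals $\bigl(\sum_j\int_0^\cdot c_{ij}^\e(X_n(s-))\,dY_n(g_j,s)\bigr)_{i,j}$; and since $Z_n^{a,\e}$ is the image of the latter under the bounded linear map $(\zeta_{ij})\mapsto\sum_i f_i\sum_j\zeta_{ij}$ into $D_\LL[0,\infty)$, one more application of the contraction principle produces an LDP for $\left\{(X_n,Y_n(\beta,\cdot),Z_n^{a,\e})\right\}$, with rate function $J_\beta^{a,\e}$ say. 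Lemma~3.14 of Feng and Kurtz \cite{FK06} then upgrades the exponentially good approximation above to an LDP for $\left\{(X_n,Y_n(\beta,\cdot),X_{n-}\cdot Y_n)\right\}$ with rate function $J_\beta=\sup_{\eta>0}\liminf_{\e\to 0}\liminf_{a\to\infty}J_\beta^{a,\e}(B(\cdot,\eta))$; as $\beta$ was arbitrary, $\left\{(X_n,Y_n,X_{n-}\cdot Y_n)\right\}$ satisfies an LDP in the sense of Definition~\ref{LDP_def}.

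The step I expect to be the main obstacle is this reduction: unlike in the $\H^\#$ case the integral $X_{n-}\cdot Y_n$ is $\LL$--valued, so one must verify carefully that after localization $Z_n^{a,\e}$ genuinely takes values in the finite--dimensional subspace $\mathrm{span}\{f_1,\dots,f_{M_a}\}\subset\LL$ -- this is precisely what makes the finite--dimensional theorem of Garcia applicable -- and that the $D_\LL[0,\infty)$--metric estimates hold uniformly in $a$, with $(\ref{UET2})$ playing the role that $(\ref{UET0})$ plays in Theorem~\ref{LDP_mainth}. The explicit identification of $J_\beta$ (the analogue of Theorem~\ref{th_ldp_si}) is not claimed here and is handled separately.
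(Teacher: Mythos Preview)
Your proposal is correct and follows precisely the approach the paper takes: the paper's own proof of Theorem~\ref{LDP_mainth2} consists only of the remark that ``the development of the proof is almost exactly similar to that of Theorem~\ref{LDP_mainth},'' and your sketch carries out exactly that transfer, with the $D_\LL$--metric replacing the $D_\R$--metric and the $(\ref{UET2})$ bound replacing $(\ref{UET0})$. Your identification of the key point---that the localized approximant $Z_n^{a,\e}$ lives in the finite--dimensional span of $f_1,\dots,f_{M_a}$ so that Garcia's finite--dimensional theorem applies---is exactly what the paper later exploits in Lemma~\ref{approx1} and the surrounding argument.
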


\subsubsection{Identification of the rate function}
As before, we follow the process of identification of the rate function of the tuple $\left\{(X_n, Y_n, X_{n-}\cdot Y_n)\right\}$ in Theorem \ref{LDP_mainth2}.
Again we assume that $\H$ is a separable Banach space with a pseudo-basis $\left\{(\phi_k,p_k)\right\}$ satisfying \ref{2} of Theorem \ref{hom} . The notation used in the beginning of Section \ref{sec:rateid} is  used here as well.
The following theorem is the analogue of Theorem \ref{UETprop} and the proof is almost exactly the same.

\begin{theorem}
\label{UETprop2}
Let $\H$ be a separable Banach space. Choose a pseudo-basis $\left\{(\phi_{k},p_{k})\right\}$ of $\H$ satisfying \ref{2} of Theorem \ref{hom}, or a Schauder basis, if the latter exists. Suppose that $\{Y_n\}$ is a UET sequence. Suppose that for $(x,y) \in D_{\hat{\H}\times \R^\infty}[0,\infty)$,  $I_\alpha(x,y)<\infty$. Then,
\begin{enumerate}[label={\rm (\roman*)}, leftmargin=*, align=right]
\item \label{yinh2_2} for every compact set $C \subset \H$,
$$\sup_{t\leq T}\sup_{h\in C}|\sum_{j=M}^{N}y_j(t)p_j(h)| \rt 0, \ \ \ \mbox{as} \ \ M, N \rt \infty;$$
\item \label{ylinear2} for every $t>0$, $\y(t) \in \H^*$, where $\y(t)\equiv\sum_{j} y_j(t)p_j$;
\item \label{yfv_2} for every $t>0$, $T_t(\y) < \infty$.
\end{enumerate}
\end{theorem}

We next state an approximation result similar to Lemma \ref{approx0}, and the key in this more general setting is to work with the right kind of approximation scheme. 
Recall that $\hat{\H}$ is the completion of the linear space $\hat{\SC{H}}\equiv\left\{\sum_{i=1}^l\sum_{j=1}^ma_{ij}f_i\phi_j: f_i \in \left\{f_i\right\}, \phi_j \in \left\{\phi_j\right\}\right\}$ with respect to a suitable norm $\|\cdot\|_{\hat{\H}}$ (c.f. Definition \ref{Hhat}).\\
For $h = \sum_{i=1}^L\sum_{j=1}^Ma_{ij}f_i\phi_j$, define
$Q_{l,m}:\hat{\SC{H}} \rt \LL^m$ by
$$Q_{l,m}(h)=(\sum_{i=1}^la_{i1}f_i,\sum_{i=1}^la_{i2}f_i,\hdots,\sum_{i=1}^la_{im}f_i).$$
For  $g\in D_{\LL^k}[0,\infty)$ define
\begin{align*}
g_\delta (t) = \sum_{k}g(\tau_k^\delta)1_{[\tau_k^\delta,\tau_{k+1}^\delta)}(t),
\end{align*}
where the $\tau_k^\delta$ are defined by
\begin{align*}
\tau_0^\delta & = 0,\\
\tau_{k+1}^\delta & = \inf\left\{s>\tau_k^\delta: \|g(s) - g(\tau_k^\delta)\|_\LL >\delta\right\}.
\end{align*}
For $y \in D_{\R^k}[0,\infty) $, define that,
\[g_\delta\cdot y(t) = \sum_k g(\tau_k^\delta)^T(y(\tau_{k+1}^\delta\wedge t) - y(\tau_k^\delta\wedge t)). \]
where for $(f_1,\hdots,f_m)^T \in \LL^k$ and $(y_1,\hdots,y_k)^T \in \R^k$, 
$$(f_1,\hdots,f_m)(y_1,\hdots,y_k)^T = \sum_iy_if_i \in \LL.$$

\begin{lemma}
\label{approx1}
For $\eta>0$ and $a>0$, there exist sufficiently small $\e>0$, $\delta>0$ and sufficiently large  $l,m>0$ such that
\[\limsup_{n\rt\infty}\f{1}{n}\log P\left[d(X_{n-}\cdot Y_n, (Q_{l,m}(X^\e_{n-}))_\delta\cdot Y_n(\alpha_N,\cdot))>2\eta\right] \leq -a,\]
where $X_n^\e(s) = \sum_{i,j}^{l,m}c_{ij}^\e(X_n(s))f_i\phi_j$, where recall that the $c_{ij}$ are defined by \eqref{xep}.
\end{lemma}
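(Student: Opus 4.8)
The plan is to mimic the proof of Lemma~\ref{approx0}, replacing the pseudo-basis truncation $S_N$ by the partition-of-unity approximant $X\mapsto X^\e$ associated with the dense sequence $\left\{\zeta_k\right\}\subset\hat{\H}$, and scalars by $\LL$-norms throughout (so that the controlling device becomes the $(\LL,\hat{\H})^\#$ UET condition~(\ref{UET2})). First I would invoke exponential compact containment of $\left\{X_n\right\}$: for the given $a$ there is a compact $K_a\subset\hat{\H}$ and, with $\tau_{n,a}=\inf\left\{s:X_n(s)\notin K_a\right\}$, one has $\limsup_n\f{1}{n}\log P[\tau_{n,a}<a]\le -a$, exactly as in Lemma~\ref{approx0}.

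The crucial preliminary step is to freeze the active indices. I would apply Lemma~\ref{pou} to the dense sequence $\left\{\zeta_k\right\}$: its last clause provides $N_{K_a}=N_{K_a}(\e)<\infty$ with $\sum_{k=1}^{N_{K_a}}\psi_k^\e(x)=1$ for all $x\in K_a$. Writing $\zeta_k=\sum_{i,j}a_{kij}f_i\phi_j$ and taking $l=l(a,\e)$ and $m=m(a,\e)=:N$ to be the largest $f$- and $\phi$-indices occurring among $\zeta_1,\dots,\zeta_{N_{K_a}}$, one gets that for $s<\tau_{n,a}$ (where $X_n(s-)\in K_a$, $K_a$ being closed) the expansion $X^\e_n(s)=\sum_{i,j}^{l,m}c_{ij}^\e(X_n(s))f_i\phi_j$ involves only these finitely many terms, while $\|X^\e_n(s)-X_n(s)\|_{\hat{\H}}\le\e$ holds for \emph{every} $s$ by the support property of the $\psi_k^\e$. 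Plugging this representation into the definition of the integral and into that of $Q_{l,m}$ yields the pathwise identity
\[
X^\e_{n-}\cdot Y_n(t)=\sum_{j=1}^m\int_0^t\Big(\sum_i c_{ij}^\e(X_n(s-))f_i\Big)\,dY_n(\phi_j,s)=\big(Q_{l,m}(X^\e_{n-})\big)\cdot Y_n(\alpha_N,\cdot)(t),\qquad t<\tau_{n,a}.
\]

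Next I would run two metric estimates. Using the Skorohod metric $d$ with $\lambda(s)=s$ and the identity just obtained, one bounds, exactly as in (\ref{equn1})--(\ref{equn2}),
\[
d\big(X_{n-}\cdot Y_n,\,(Q_{l,m}(X^\e_{n-}))_\delta\cdot Y_n(\alpha_N,\cdot)\big)\le \sup_{s<\tau_{n,a}}\|(X_n-X^\e_n)\cdot Y_n(s)\|_\LL+e^{-\tau_{n,a}}+\sup_{s\le t}\|(Q_{l,m}(X^\e_{n-})-(Q_{l,m}(X^\e_{n-}))_\delta)\cdot Y_n(\alpha_N,\cdot)(s)\|_\LL+e^{-t}.
\]
The first supremum is controlled by (\ref{UET2}) applied to the $\hat{\H}$-valued integrand $(X_n-X^\e_n)/\e$, which has norm $\le 1$: it exceeds $\e\,k(t,a)$ only on an event of exponential rate $\le -a$. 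The third supremum is the time-discretization error: by construction of $g\mapsto g_\delta$ the difference $Q_{l,m}(X^\e_{n-})-(Q_{l,m}(X^\e_{n-}))_\delta$ has $\LL^m$-norm $\le\delta$ on $[0,\tau_{n,a})$, and the associated $\hat{\H}$-valued integrand $\sum_{i,j}(\cdot)_{ij}f_i\phi_j$ therefore has $\hat{\H}$-norm $\le C_{l,m}\delta$ with $C_{l,m}$ depending only on $l,m$ and the fixed vectors $f_i,\phi_j$; so (\ref{UET2}) again shows it exceeds $C_{l,m}\delta\,k(t,a)$ only on an event of rate $\le -a$. To conclude I would fix $a>-\log\eta$, then $t$ with $e^{-t}<\eta$, then $\e$ small so that $\e\,k(t,a)<\eta$ (which pins down $l,m,N$ through the previous step), and finally $\delta$ small so that $C_{l,m}\delta\,k(t,a)<\eta$; intersecting the complements of these two exceptional events with that of $\left\{e^{-\tau_{n,a}}>\eta\right\}\subset\left\{\tau_{n,a}<a\right\}$ gives the asserted estimate.

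I expect the index-freezing step to be the main obstacle. Unlike Lemma~\ref{approx0}, where the truncation $S_N$ already has a fixed finite ``width'', here one must certify that $X^\e_n$ can be expressed with a single finite index set $\left\{f_i\phi_j:i\le l,\ j\le m\right\}$ \emph{uniformly in} $n$ --- otherwise neither $Q_{l,m}$ nor the integral against $Y_n(\alpha_N,\cdot)$ is defined with $N$ independent of $n$. This is precisely where exponential \emph{compact containment} (not merely exponential tightness of the increments) enters, in tandem with the compact-$K$ finiteness clause of Lemma~\ref{pou}. Once the indices are frozen, what remains is the routine transcription of Lemma~\ref{approx0} from scalars to $\LL$-norms, together with the elementary fact that the norm on $\hat{\H}$ dominates a multiple of $\|\cdot\|_\LL$ on each slice $\LL\otimes\phi_j$.
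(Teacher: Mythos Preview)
Your proposal is correct and follows essentially the same route as the paper's own proof: compact containment $\Rightarrow$ stopping time $\tau_{n,a}$; freeze the finitely many indices $l,m$ via the compact-$K$ clause of Lemma~\ref{pou}; identify $X^\e_{n-}\cdot Y_n$ with $Q_{l,m}(X^\e_{n-})\cdot Y_n(\alpha_m,\cdot)$; split the Skorohod distance into the same four pieces; and close with the UET bound. The paper is slightly terser---it sets $\e=\delta$ in one stroke and does not spell out your constant $C_{l,m}$ for the $\LL^m\hookrightarrow\hat{\H}$ norm comparison---but the skeleton is identical, and your explicit treatment of the index-freezing step and the $C_{l,m}$ constant is, if anything, more careful than the paper's.
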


\begin{proof}
Since $\left\{X_n\right\}$ is exponentially tight, it satisfies the exponential compact containment condition. Thus,  there exists  a compact set $K_a \subset \hat\H$ such that 
\[\limsup_n \f{1}{n} \log P\left[X_n(t) \notin K_a, \ \T{for some} \  t < a \right] \leq -a/3.\]
Let $\tau_{n,a} = \inf\left\{s: X_n(s) \notin K_a\right\}$. Then notice that
\[P\left[\tau_{n,a} < a\right] = P\left[X_n(t) \notin K_a, \ \T{for some} \  t < a \right]. \]
Hence, 
\begin{align*}
\limsup_n \f{1}{n} \log P\left[\tau_{n,a} < a\right] \leq -a/3.
\end{align*}

\np
Fix $\e>0$. Then there exists $l,m>0$ such that  defining 
$x^\e = \sum_{i,j}^{l,m}c_{ij}^\e(x)f_i\phi_j,$
we have
$$\|x^\e-x\|_{\hat \H}\leq \e,\ \mbox{ for all } x \in K_a.$$
Hence,
\begin{align*}
\sup_{s<\tau_{n,a}}\|X_n^\e(s) -X_n(s)\|_{\hat \H} \leq \e.
\end{align*}
and the rest of the proof follows similar steps used after \eqref{tauna} in the proof of Lemma \ref{approx0}, except that we now use $Q_{l,m}$ and $X_n^\e$ instead of $P_N$ and $S_N(X_n)$.
\end{proof}

The following result generalizes Theorem \ref{th:rateidH} to the wider class of $(\LL, \hat{\H})^\#$-semimartingales. We only present the key steps.
\begin{theorem}\label{th:rateidLH}
Suppose for $(x, y,z) \in D_{\hat{\H}\times\R^{\infty}\times\LL}[0,\infty)$, $\y$ defined by (\ref{ydef2})  $\in \SC{D}$. 
\begin{enumerate}[label={\rm (\roman*)}, leftmargin=*, align=right]
\item \label{Assert1_lh} If $z \neq x\cdot \y$, then
\[J_\alpha(x, y,z) =\infty.\]
\item \label{Assert2_lh} If $z = x\cdot \y$, then
\[J_\alpha(x, y,z) =I_\alpha(x, y).\]
\end{enumerate}
\end{theorem}

\begin{proof}
\ref{Assert1_lh} Fix an $a>0$. By the hypothesis, there exists an $\eta>0$ such that 
\begin{align}
\label{eta2}
d(z,x\cdot\y) > 4\eta >0.
\end{align}
For this $\eta$, choose $l,m>0$ and $\e, \delta>0$ such that conclusion of Lemma \ref{approx1} is satisfied. In fact, choose $l,m>0$ large enough and $\delta>0$ small enough so that
\[d(x\cdot \y, (Q_{l,m}(x^\e))_\delta\cdot \y(\alpha_m,\cdot)) <2\eta,\]
where $x^\e = \sum_{i,j}^{l,m}c_{ij}^\e(x)f_i\phi_j $, $\y(\alpha_m,\cdot) \equiv (\y(\phi_1,\cdot),\hdots,\y(\phi_m,\cdot)) = (y_1,\hdots,y_m)=  y^{(m)}$.
Then
\[d(z,(Q_{l,m}(x^\e))_\delta\cdot \y(\alpha_m,\cdot)) > 2\eta.\]
Now following the steps used in Theorem \ref{th:rateidH}, we define a function $G^{\delta,\e}: D_{\hat{\H}\times\R^\infty}[0,\infty) \rt D_{\R}[0,\infty)$ by
$G^{\delta,\e}(h, u) = (Q_{l,m}(h^\e))_\delta\cdot  u^{(m)}, $
and the rest of the proof is similar.

\vspace{.3cm}
\ref{Assert2_lh}
 Since $T_t(\y)<\infty$ and $z = x\cdot \y$, for every $\eta>0$, we can find sufficiently large $l,m>0$ such that
\[d((x, y,z),(x, y,Q_{l,m}(x)\cdot y^{(m)})) <\eta.\]
Let $J_\alpha^{l,m}$ denote the rate function for $(X_n,Y_n(\alpha,\cdot), Q_{l,m}(X_{n-})\cdot Y_n(\alpha_m,\cdot) )$ and note that by our notation
\[Q_{l,m}(X_{n-})\cdot Y_n(\alpha_m,\cdot)  =\sum_{i=1}^l\sum_{j=1}^m f_i\int c_{i,j}(X_{n}(s-)) \ dY_k(\phi_j,s). \]
Similar to \eqref{findim1} and \eqref{findim2}, in this case the finite-dimensional result of Garcia \cite{Gar08} yields that
\[J_\alpha^{l,m}(x, y,Q_{l,m}(x)\cdot y^{(m)})) = I_\alpha(x, y).\]
Now just like \eqref{rate3}
$$J_{\alpha}(x, y,z) = \lim_{\eta \rt 0} \liminf_{l,m\rt\infty}J^{l,m}_{\alpha}(B_{\eta}((x, y,z))),$$
and it follows that
$$J_{\alpha}(x, y,z) \leq I_\alpha(x, y).$$
As the reverse inequality is always true by the contraction principle, the theorem follows. 
\end{proof}

We summarize our results in the following theorem.
\begin{theorem}
\label{th_ldp_si_2}
Let $\H$ be a separable Banach space and $\left\{(\phi_{k},p_{k})\right\}$  a pseudo-basis of $\H$ satisfying \ref{2} of Theorem \ref{hom}, or a Schauder basis, if the latter exists. Let $\left\{Y_n\right\}$ be a sequence of $\left\{\SC{F}^n_t\right\}$-adapted, standard  $(\LL, \hat{\H})^\#$-semimartingales and $\left\{X_n\right\}$ a sequence of cadlag, adapted $\hat{\H}$ valued processes. Assume $\left\{Y_n\right\}$ is UET. If $\left\{(X_n,Y_n)\right\}$ satisfies a LDP in the sense of Definition \ref{LDP_def}, then $\left\{(X_n, Y_n, X_{n-}\cdot Y_n)\right\}$ also satisfies a LDP.  The associated rate function of the tuple $\left\{(X_n,Y_n(\alpha,\cdot), X_{n-}\cdot Y_n)\right\}$ can be expressed as
\begin{eqnarray}
J_{\alpha}(x, y, z)& = 
\begin{cases}
I_{\alpha}(x,y), & z = x\cdot \y, \ \ \  \y \in\SC{D},\\
\infty, & \mbox{otherwise,}
\end{cases}
\end{eqnarray}
where $\alpha=(\phi_1,\phi_2,\hdots,), Y_n(\alpha,\cdot) = (Y_n(\phi_1,\cdot), Y_n(\phi_2,\cdot),\hdots)$, $\y$ and $\SC{D}$ are defined by (\ref{ydef2}) and (\ref{DHdef}) respectively.
\end{theorem}

\setcounter{equation}{0}
\renewcommand {\theequation}{\arabic{section}.\arabic{equation}}
\section{LDP for stochastic differential equation}
\label{sec:SDE}
\subsection{$\H^\#$-semimartingale}
\begin{theorem}
\label{sde}
Let $\H$ be a separable Banach space and $\left\{(\phi_{k},p_{k})\right\}$  a pseudo-basis of $\H$ satisfying \ref{2} of Theorem \ref{hom}, or a Schauder basis, if the latter exists. Let $\left\{Y_n\right\}$ be a sequence of uniformly exponentially tight $\left\{\SC{F}^n_t\right\}$-adapted, standard $\H^\#$-semimartingales, $\left\{X_n\right\}$ a sequence of cadlag, adapted $\R^d$-valued processes and $\left\{U_n\right\}$ a sequence of adapted $\R^d$-valued cadlag processes. Suppose that $\left\{(U_n, Y_n)\right\}$ satisfies a large deviation principle with the rate function family $\left\{I_\beta(\cdot,\cdot)\right\}$. Assume that $F: \R^d \rt \H$ is a continuous function and  $X_n$ satisfies
\[X_n(t) = U_n(t)+F(X_{n-})\cdot Y_n(t).\]
For $ y \in D_{\R^\infty}[0,\infty)$, define $\y$ by (\ref{ydef2}). Suppose that for every $(u,y) \in D_{\R^d\times\R^\infty}[0,\infty)$ for which $I_\alpha(u,y) < \infty$, the solution to 
$$x = u + F(x)\cdot \y$$
is unique.
Assume further that  $\left\{(U_n, X_n,Y_n)\right\}$ is exponentially tight.
Then the sequence \\$\left\{(U_n,X_n,Y_n(\alpha,\cdot))\right\}$ satisfies a LDP in $D_{\R^d\times \R^d\times\R^\infty}[0,\infty)$ with the rate function given by
\begin{eqnarray}
J_{\alpha}(u,x, y)& = 
\begin{cases}
I_{\alpha}(u,y), & x = u + F(x)\cdot \y, \ \ \  \y \in\SC{D},\\
\infty, & \mbox{otherwise},
\end{cases}
\end{eqnarray}
where  $\y$ and $\SC{D}$ are defined by (\ref{ydef2}) and (\ref{DHdef}) respectively.
\end{theorem}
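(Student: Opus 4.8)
The plan is to mimic the route used for the stochastic-integral theorems. Since $\{(U_n,X_n,Y_n)\}$ is assumed exponentially tight, by the subsequential-LDP criterion (Theorem 3.7 of \cite{FK06}) every subsequence has a further subsequence along which $\{(U_n,X_n,Y_n(\alpha,\cdot))\}$ satisfies an LDP with some rate function $\hat J$; it then suffices to show that $\hat J$ equals the asserted $J_\alpha$ regardless of the chosen subsequence, whence the full sequence satisfies the LDP with rate $J_\alpha$. Only $Y_n$ is infinite dimensional here; as $X_n,U_n$ are $\R^d$-valued, the pseudo-basis $\{(\phi_k,p_k)\}$ enters only through the definition of $\y$ in (\ref{ydef2}) and of $\SC{D}$ in (\ref{DHdef}). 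By the Dawson-Gartner theorem \cite{DZ98} it is enough to argue with each finite block $\alpha_k$ and then pass to the limit, but for brevity I would work directly with the full index $\alpha$, as in Theorem \ref{th_ldp_si}.

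First I would show that $\hat J(u,x,y)=\infty$ unless $\y\in\SC{D}$ and $x=u+F(x)\cdot\y$. Because $F$ is continuous, $\{F(X_n)\}$ is exponentially tight in $D_{\H}[0,\infty)$ — $F$ sends the compacts supplied by the exponential compact containment of $\{X_n\}$ to compacts and is uniformly continuous there, so the jump and continuity moduli of $F(X_n)$ are controlled by those of $X_n$ — and by the subsequential LDP and continuity of $F$ the family $\{(U_n,X_n,F(X_n),Y_n)\}$ satisfies an LDP, so $\{(F(X_n),Y_n)\}$ meets the hypotheses of Theorem \ref{th_ldp_si}. Running the proof of Theorem \ref{th_ldp_si} with $(U_n,X_n)$ carried along as an exponentially tight tag-along (the discretisations in Lemma \ref{exptight1} and Theorem \ref{LDP_mainth} never touch the extra coordinates) yields an LDP for $\{(U_n,X_n,Y_n(\alpha,\cdot),F(X_{n-})\cdot Y_n)\}$ whose rate is $\infty$ unless the last coordinate equals $(F\circ x)\cdot\y$ with $\y\in\SC{D}$, in which case it equals $\hat J(u,x,y)$. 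But $F(X_{n-})\cdot Y_n=X_n-U_n$ identically, so applying the LDP upper bound to closed sets that avoid $\{z=x-u\}$ forces the rate to be $\infty$ unless also $z=x-u$; hence $x-u=(F\circ x)\cdot\y$ and $\y\in\SC{D}$. Contracting to the $(u,y)$-coordinates and using the hypothesised LDP for $\{(U_n,Y_n(\alpha,\cdot))\}$ with rate $I_\alpha$ gives $I_\alpha(u,y)=\inf_x\hat J(u,x,y)\le\hat J(u,x,y)$; in particular $I_\alpha(u,y)<\infty$ whenever $\hat J(u,x,y)<\infty$, and then the uniqueness hypothesis forces $x$ to be \emph{the} solution of $x=u+F(x)\cdot\y$.

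The remaining step is the reverse bound $\hat J(u,x,y)\le I_\alpha(u,y)$ at a point with $I_\alpha(u,y)<\infty$, $\y\in\SC{D}$ and $x$ the unique solution. For this I would introduce an Euler-type discretisation $\hat X_n^{N,\delta}$ of the equation, truncating $F$ through $S_N=\sum_{j\le N}p_j(\cdot)\phi_j$ and freezing the coefficient on the random partition $\{\tau_k^\delta\}$ exactly as in Lemma \ref{approx0}, so that $\hat X_n^{N,\delta}=G^{N,\delta}(U_n,Y_n(\alpha_N,\cdot))$ for a map $G^{N,\delta}$ that, for a suitable choice of $\delta$, is continuous at $(u,y)$ with $d(G^{N,\delta}(u,y^{(N)}),x)<\eta$ for $N$ large. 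An SDE analogue of Lemma \ref{approx0} — using the UET of $\{Y_n\}$ to control the stochastic integral of the error integrand and the uniqueness hypothesis to keep the discretised solutions from drifting away from $x$ — then gives $\limsup_n\f{1}{n}\log P[d(X_n,\hat X_n^{N,\delta})>\eta]\le-a$ with $a>I_\alpha(u,y)$, for $N$ large and $\delta$ small. Since $\{(U_n,Y_n(\alpha,\cdot))\}$ has the LDP with rate $I_\alpha$ and $G^{N,\delta}$ is continuous at $(u,y)$, for small $r$ the event $\{(U_n,Y_n(\alpha,\cdot))\in B_r(u,y)\}$ has probability $\gtrsim e^{-n(I_\alpha(u,y)+o(1))}$ and on it $\hat X_n^{N,\delta}\in B_{2\eta}(x)$; subtracting the super-exponentially rarer event $\{d(X_n,\hat X_n^{N,\delta})>\eta\}$ via $P[A_n\setminus B_n]\ge P[A_n]-P[B_n]$ (legitimate because $a>I_\alpha(u,y)$) yields $\liminf_n\f{1}{n}\log P[(U_n,X_n,Y_n(\alpha,\cdot))\in B_{3\eta}(u,x,y)]\ge-I_\alpha(u,y)$. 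As $\eta>0$ is arbitrary, the LDP upper bound together with lower semicontinuity of $\hat J$ gives $\hat J(u,x,y)\le I_\alpha(u,y)$. Combined with the previous step this shows $\hat J=J_\alpha$ for every subsequence, hence the full sequence satisfies the LDP with rate $J_\alpha$.

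The main obstacle is precisely the exponential SDE-stability estimate just described — the SDE analogue of Lemma \ref{approx0} — in which the UET condition supplies the super-exponential control of the stochastic-integral error while the uniqueness hypothesis is what pins the common limit of the discretised solutions to $x$; without uniqueness one would only obtain a rate function supported on the whole set of solution paths. The surrounding ingredients (continuity of $F$ on compact sets, the tag-along form of Theorem \ref{th_ldp_si}, the contraction principle, and Dawson-Gartner for passing from each $\alpha_k$ to $\alpha$) are routine. One should also note that $G^{N,\delta}$ is continuous only at paths that do not meet the $\delta$-thresholds tangentially, which is harmless since the LDP bounds above are evaluated only at the single point $(u,y)$, where $\delta$ can be chosen to avoid this.
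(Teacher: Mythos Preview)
Your first paragraph --- the subsequential reduction together with the tag-along use of Theorem~\ref{th_ldp_si} to force $\hat J(u,x,y)=\infty$ unless $\y\in\SC D$ and $x=u+F(x)\cdot\y$ --- is essentially the paper's argument. The paper phrases it as two applications of the contraction principle yielding two expressions for the rate function of $\{(U_n,F(X_n),X_n,Y_n(\alpha,\cdot))\}$, which are then compared, but the content is the same as what you wrote.

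Where you diverge from the paper is in the reverse bound, and here you are working much harder than necessary. You already have everything you need at the end of your first paragraph: $I_\alpha(u,y)=\inf_q\hat J(u,q,y)$ by contraction, and every $q$ with $\hat J(u,q,y)<\infty$ must solve $q=u+F(q)\cdot\y$. So if $I_\alpha(u,y)<\infty$, pick any $q$ with $\hat J(u,q,y)<\infty$ (such $q$ exists since the infimum is finite); that $q$ is a solution, hence by the uniqueness hypothesis $q=x$. Thus $\hat J(u,x,y)=\hat J(u,q,y)$ for every $q$ of finite rate, giving immediately $\hat J(u,x,y)=\inf_q\hat J(u,q,y)=I_\alpha(u,y)$. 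This is precisely the paper's Case~2 argument, and it requires neither goodness of $\hat J$ nor any approximation scheme.

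The Euler discretisation you propose --- and rightly flag as the main obstacle --- is therefore entirely avoidable. You never need an SDE stability estimate analogous to Lemma~\ref{approx0}, because the uniqueness hypothesis does all the identification work at the level of the rate function, not at the level of trajectories of $X_n$. Your discretisation route might in principle be made to work, but the step you leave open (super-exponential closeness of $X_n$ to $\hat X_n^{N,\delta}$, with some Gronwall-type control through the UET bound) is genuinely harder than anything in the paper, and is simply not needed.
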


\begin{proof} We borrow some ideas from the proof of Theorem 8.2 of Garcia \cite{Gar08}.
To prove that $\left\{(U_n,X_n,Y_n(\alpha,\cdot))\right\}$ satisfies a large deviation principle, its enough to prove that for every subsequence of $\left\{(U_n,X_n,Y_n(\alpha,\cdot)\right\}$, there exists a further subsequence which satisfies a LDP with the same rate function $J_{\alpha}$. Since for every exponentially tight sequence there exists a subsequence which satisfies a LDP, we can assume that $\left\{(U_n,X_n,Y_n(\alpha,\cdot))\right\}$ satisfies a LDP with the rate function $J_{\alpha}$,  and then show that the expression of $J_{\alpha}$ does not depend on the choice of a subsequence.\\
Let $(u,x, y) \in \ D_{R^d\times\R^d\times\R^\infty}[0,\infty)$.
Notice that if $\y$ defined by (\ref{ydef2}) is not in $\SC{D}$, then $J_{\alpha}(u,x,y) = \infty$ by Theorem \ref{yinh} and \ref{yfv}.\\
Next suppose that $\y \in \SC{D}$, but $x\neq u+F(x)\cdot \y$. We will prove that in this case
\[J_\alpha(u,x, y) = \infty.\]
Notice that by the contraction principle, $\left\{(U_n, F(X_n),Y_n(\alpha,\cdot))\right\}$ satisfies a LDP with the rate function
$$ \tilde{\Lambda}_\alpha(u, z, y) = \inf_{q}\left\{J_\alpha(u,q, y): F(q)=z\right\}.$$
It follows from Theorem \ref{th_ldp_si} that $\left\{(U_n, F(X_n),Y_n(\alpha,\cdot), F(X_{n-})\cdot Y_n)\right\}$ satisfies a  LDP with the rate function 
\begin{equation*}
\Lambda_\alpha(u, z, y,p) =
\begin{cases}
 \inf_{q}\left\{J_\alpha(u,q,y): F(u)=z\right\}, & p= z\cdot\y, \ \  \y \in \SC{D},\\
\infty, & \mbox{otherwise.}
\end{cases}
\end{equation*}
Since $X_n = U_n+F(X_{n-})\cdot Y_n$, it now follows from the contraction principle that  $\left\{(U_n, F(X_n),X_n, Y_n(\alpha,\cdot))\right\}$ satisfies the LDP with the rate function given by
\begin{equation}
\label{ratej}
\tilde{J}_\alpha(u,z, x, y) =
\begin{cases}
 \inf_q\left\{J_\alpha(u,q, y): F(q)=z\right\}, & x= u+z\cdot\y, \ \  \y \in \SC{D},\\
\infty, & \mbox{otherwise.}
\end{cases}
\end{equation}
On the other hand, notice that since $\left\{(U_n, X_n, Y_n(\alpha,\cdot))\right\}$ satisfies LDP with rate function $J_\alpha$, the contraction principle yields 
\begin{equation}
\label{ratej2}
 \tilde{J}_\alpha(u, z, x, y) = 
\begin{cases}
 J_\alpha(u, x,y), & \mbox{ if } z = F(x),\\
 \infty, & \mbox{otherwise.}
\end{cases}
\end{equation}

\np
We will prove that if $x\neq u + F(x)\cdot \y$, then for all $z$ 
\[\tilde{J}_\alpha(u, z,x, y) = \infty.\]
Then taking infimum over all $z$, we see that $J_\alpha(u,x, y) = \infty$.\\

\np
{\it Case 1:} $x\neq u + F(x)\cdot \y$ \\
Fix a $z$. Notice that if $x \neq u + z\cdot \y$,  $\tilde{J}_\alpha(u,z,x, y) =\infty$. So suppose that $x = u + z\cdot \y$. Then from the assumption we find that
$z\neq F(x)$. It follows from (\ref{ratej2}) that, $\tilde{J}_\alpha(u,z,x, y) = \infty$.\\

\np
Next assume that $x= u + F(x)\cdot \y$. We will prove that $J_\alpha(u,x, y) = I_\alpha(u, y)$.\\
{\it Case 2:} $x= u + F(x)\cdot \y$\\
If $I_{\alpha}(u,y) =\infty$, clearly $J_{\alpha}(u,x,y) =\infty$. So, assume that $I_{\alpha}(u,y) < \infty$. Then there exists a $q$ such that $J_\alpha(u,q,y)<\infty$.
From  {\it Case 1} it follows that $q =u + F(q)\cdot \y$. By the uniqueness assumption, it follows that $q=x$. Hence we have
$$J_\alpha(u,x,y) = J_\alpha(u,q,y), \T{ for all $q$ such that } J_\alpha(u,q,y)<\infty.$$
It follows that
$$J_\alpha(u,x,y) = \inf_qJ_\alpha(u,q,y) = I_\alpha(u,y).$$
\end{proof}

\np
The above theorem can be extended to cover stochastic differential equations of the type
$$X_n(t) = U_n+F_n(X_{n-})\cdot Y_n,$$
where the $F_n:\R^d\rt \H$ are measurable functions satisfying some suitable conditions.

\begin{theorem}
\label{sde2}
Let $\H$ be a separable Banach space and $\left\{(\phi_{k},p_{k})\right\}$  a pseudo-basis of $\H$ satisfying \ref{2} of Theorem \ref{hom}, or a Schauder basis, if the latter exists. Let $\left\{Y_n\right\}$ be a sequence of uniformly exponentially tight $\left\{\SC{F}^n_t\right\}$-adapted, $\H^\#$-semimartingales, $\left\{X_n\right\}$ a sequence of cadlag, adapted $\R^d$-valued processes and $\left\{U_n\right\}$ a sequence of adapted $\R^d$-valued cadlag processes. Suppose $\left\{(U_n, Y_n\right\}$ satisfies large deviation principle with the rate function family $\left\{I_\beta(\cdot,\cdot)\right\}$. Assume that $F, F_n: \R^d \rt \H$ are measurable functions such that
\begin{itemize}
\item for all $x$ whenever $x_n\rt x$, $F_n(x_n) \rt F(x).$  
\end{itemize}
Suppose that $X_n$ satisfies
\[X_n(t) = U_n(t)+F_n(X_{n-})\cdot Y_n(t).\]
For $ y \in D_{\R^\infty}[0,\infty)$, define $\y$ by (\ref{ydef2}). Suppose that for every $(u,y) \in D_{\R^d\times\R^\infty}[0,\infty)$ for which $I_\alpha(u,y) < \infty$, the solution to 
$$x = u + F(x)\cdot \y$$
is unique.
Assume further that  $\left\{(U_n, X_n,Y_n)\right\}$ is exponentially tight.
Then the sequence $\left\{(U_n,X_n,Y_n(\alpha,\cdot))\right\}$ satisfies a LDP with the rate function given by
\begin{eqnarray}
J_{\alpha}(u,x, y)& = 
\begin{cases}
I_{\alpha}(u,y), & x = u + F(x)\cdot \y, \ \ \  \y \in\SC{D},\\
\infty, & \mbox{otherwise.}
\end{cases}
\end{eqnarray}
\end{theorem}

\begin{proof}
The proof is almost exactly same as the above theorem once we apply a generalized version of the contraction principle (see Theorem \ref{cont_mod}) instead of the usual one. 
\end{proof}

\subsection{ $(\LL, \hat{\H})^\#$-semimartingale}
We now generalize Theorem \ref{sde2} to cover   infinite-dimensional SDEs driven by  $(\LL, \hat{\H})^\#$-semimartingales.

\begin{theorem}
	\label{sde3}
	Let $\H$ be a separable Banach space and $\left\{(\phi_{k},p_{k})\right\}$  a pseudo-basis of $\H$ satisfying \ref{2} of Theorem \ref{hom}, or a Schauder basis, if the latter exists. Let $\left\{Y_n\right\}$ be a sequence of uniformly exponentially tight $\left\{\SC{F}^n_t\right\}$-adapted, $(\LL, \hat{\H})^\#$-semimartingales, $\left\{X_n\right\}$ a sequence of cadlag, adapted $\LL$-valued processes and $\left\{U_n\right\}$ a sequence of adapted $\LL$-valued cadlag processes. Suppose $\left\{(U_n, Y_n\right\}$ satisfies large deviation principle with the rate function family $\left\{I_\beta(\cdot,\cdot)\right\}$. Assume that $F, F_n: \LL \rt \hat{\H}$ are measurable functions such that
	\begin{itemize}
		\item for all $x$ whenever $x_n\rt x$, $F_n(x_n) \rt F(x).$  
	\end{itemize}
	Suppose that $X_n$ satisfies
	\[X_n(t) = U_n(t)+F_n(X_{n-})\cdot Y_n(t).\]
	For $ y \in D_{\R^\infty}[0,\infty)$, define $\y$ by (\ref{ydef2}). Suppose that for every $(u,y) \in D_{\LL\times\R^\infty}[0,\infty)$ for which $I_\alpha(u,y) < \infty$, the solution to 
	$$x = u + F(x)\cdot \y$$
	is unique.
	Assume further that  $\left\{(U_n, X_n,Y_n)\right\}$ is exponentially tight.
	Then the sequence $\left\{(U_n,X_n,Y_n(\alpha,\cdot))\right\}$ satisfies a LDP in $D_{\LL\times \LL\times \R^{\infty}}[0,\infty)$ with the rate function given by
	\begin{eqnarray}
		J_{\alpha}(u,x, y)& = 
		\begin{cases}
			I_{\alpha}(u,y), & x = u + F(x)\cdot \y, \ \ \  \y \in\SC{D},\\
			\infty, & \mbox{otherwise.}
		\end{cases}
	\end{eqnarray}
\end{theorem}
The proof follows almost the  exact same route  as that of Theorem \ref{sde}.

\setcounter{equation}{0}
\renewcommand {\theequation}{\arabic{section}.\arabic{equation}}
\section{Examples}\label{sec:ex}
The above results lay out a systematic program for validation of LDP for SDEs: (i) verify that the driving semimartingale sequence $\{Y_n\}$ is UET, (ii) check that $\{Y_n\}$ satisfies LDP (usually follows some standard results) and identify the rate function, (iii) prove that $X_n$ is exponentially tight. The desired result on the LDP for $\{X_n\}$ along with its rate function then readily follows from Theorems \ref{sde2} or  \ref{sde3}.  As mentioned before, the choice of the indexing space $\H$ for the  driving infinite-dimensional semimartingales $Y_n$ smartly encodes some of the necessary conditions required on the coefficients of the SDEs.

\begin{example}[LDP for Markov chains] \label{Markovsde}{\rm   

Let $\left\{X^n_k\right\}$ be a Markov Chain in  $\R^d$ satisfying
 $$X^n_{k+1} = X^n_k + \f{1}{n}b(X^n_k,\xi_{k+1}), \quad X^n_0 = x_0,$$
where the $\xi_k$ are iid random variables in $\E$ with distribution $\pi$ and $b:\R^d\times \E \rt \R^d.$
We want a LDP for $\left\{X^n(t) \equiv X^n_{\left[nt\right]}\right\}$.
Define 
$$M_n(\Gamma, t) = \sum_{k=1}^{\left[nt\right]}1_{\Gamma}(\xi_k),$$
and notice that,
$$X^n(t) = x_0 + \f{1}{n}\int_{\E\times[0,t)} b(X^n(s),u)M_n(du\times ds).$$
Now $M_n$ is a counting measure with mean measure $\pi\ot\mu_n$, where $\mu_n\left[0,t\right] =\left[nt\right]$. 

 Notice that the $Y_n \equiv M_n/n$ can be considered as cadlag processes on $\SC{M}_F(\E)$, that is, 
$Y_n \in D_{\SC{M}_F(\E)}[0,\infty).$
Write
\begin{align}\label{def:LE}
\SC{L}(\E) = \left\{z \in \SC{M}_F(\E\times [0,\infty)): z(\E\times \left[0,t\right]) = t, t\geq 0\right\}.
\end{align}
Topologize $\SC{L}(\E)$ by weak convergence on bounded intervals, that is, $z_n\rt z$ if
$$\int_{\E\times \left[0,t\right]}f(u,s)z_n(du\times ds) \rt \int_{\E\times \left[0,t\right]}f(u,s)z(du\times ds), \ \ t\geq 0$$
for all $f \in C_b(\E\times [0,\infty))$. If $z \in \SC{L}(\E)$, then there exists $\mu \in M_{\SC{P}(\E)}[0,\infty)$ such that
$$z(C \times [0,t)) = \int_0^t \mu(s)(C) \ ds.$$  
We write $\dot{z}(t) = \mu(t)$. Conversely, if $\mu \in M_{\SC{P}(\E)}[0,\infty)$, then $z$ defined by the above relation is in $\SC{L}(E).$ 
Now by Sanov's theorem,   $\left\{Y_n\right\}$ satisfies a LDP in $D_{\SC{M}_F(\E)}[0,\infty)$ with rate function
\begin{eqnarray}\label{rate_sanov}
	\bar I(z)& = 
	\begin{cases}
		\int_0^\infty R(\dot{z}(s)|\pi)\ ds, & z \in \SC{L}(\E)\\
		\infty, & \mbox{otherwise,}
	\end{cases}
\end{eqnarray}
where $R(\nu|\pi)\equiv \int _E \f{d\nu}{d\pi} \ln \le(\f{d\nu}{d\pi}\ri)  d\pi$ is the relative entropy of the measure $\nu$ with respect to $\pi$.

 We take the indexing space $\H$ to be $M^\Phi(\pi) \subset L^\Phi(\pi)$ (see \eqref{MTspace} for definition).

  Consider  $M_n$ as an $\H^\#$-semimartingale, for , with $\Phi(x) = e^x -1$. As standard, this means for $h\in M^\Phi(\pi)$, $M_n(h,\cdot)$ is defined by 
$$M_n(h,t) = \int_{\E\times[0,t} h(u)M_n(du\times ds) = \sum_{k=1}^{[nt]}h(\xi_k).$$ 
Assume that $b$ is Lipschitz in the first argument and 
$\sup_x\|b(x,\cdot)\|_\Phi < \infty$.\\ 
We now carry out the program of verification of LDP for $\{X_n\}$.\\

\np
{\it UET of $\{Y_n\}$:} The proof that $\left\{Y_n\equiv M_n/n\right\}$ is UET is  similar to that of Poisson random measure in Example \ref{ex_Poisson}.
Let $Z(u,s)$ be a cadlag process such that $\sup_{s\leq t}\|Z(\cdot,s)\|_\Phi \leq 1$.
Observe that without loss of generality we can take $Z \geq 0$ for our purpose.\\
Call  $H_n(t) \equiv  Z_-\cdot M_n (t)$. Then as in Example \ref{ex_Poisson}, apply It{\^o}'s lemma to get 
\begin{align*}
E(e^{H_n(t)})& = 1 + nE\int_0^t e^{H_n(s)}\int_\E(e^{Z(u,s)} - 1)\pi(du)\mu_n(ds) .\\
\end{align*} 
Now since $\|f\|_\Phi \leq 1 $ iff $\int\Phi(|f|)\ d\nu \leq 1$, we see from our assumption on the process $Z$ that
$\sup_{s\leq t}\int_\E(e^{Z(u,s)} - 1) d\nu(u) \leq 1$. Thus, 
\[E(e^{H_n(t)})\leq 1 + E\int_0^t e^{H_n(s)}\mu_n(ds),\]
and by Gronwall's inequality
\[E(e^{H_n(t)}) = E\le(e^{Z_-\cdot Y_n(t)}\ri) \leq e^{\left[nt\right]}.\]
Therefore
\begin{align*}
P(\sup_{s\leq t}Z_-\cdot Y_n(s) > K )& =  P(Z_-\cdot M_n(t) > nK ) =P\le(e^{Z_-\cdot M_n(t)} > e^{nK} \ri)\\
& \leq E(e^{Z_-\cdot M_n(t)}) /e^{nK} \leq e^{nt - nK}.
\end{align*}
Choosing $k(t,a) \equiv K= t+a$, we have
\[\limsup_n\f{1}{n} \log\sup P\left[\sup_{s\leq t}|Z_-\cdot Y_n(s)| > k(t,a)\right] \leq -a.\]\\

\np
{\it LDP of $\{Y_n\}$:} To prove that $\left\{Y_n\equiv M_n/n\right\}$ satisfies a LDP as $\H^\#$-semimartingales, we observe that, by the contraction principle, for any finite collection $h= (h_1,\hdots h_m)$ in $M^\Phi(\pi)$, 
$\{Y_n(h,\cdot) \equiv (Y_n(h_1,\cdot),\hdots,Y_n(h_m,\cdot))\}$
satisfies a LDP in $D_{\R^m}[0,\infty)$ with the rate function given by
\begin{align*}
	I_h(y^{(m)}) = \inf\le\{\bar I(z): y_i(t) = \int_{\E\times [0,t]} h_i(u) z(du\times ds) \ i=1,2,\hdots,m \ri\}, \quad  y^{(m)}  = (y_1,y_2,\hdots,y_m).
\end{align*}	
Thus choosing a pseudo-basis $\{(\phi_k,p_k)\}$ of $M^\Phi(\pi)$, we have that $\{Y_n(\alpha,\cdot) \equiv (Y_n(\phi_1,\cdot),Y_n(\phi_2,\cdot)),\cdots\}$ satisfies a LDP in $D_{R^\infty}[0,\infty)$with 
 with the rate function given by
 \begin{align}\label{rateY_Markov}
 	I_\alpha(y) = \inf\le\{\bar I(z): y_i(t) = \int_{E\times [0,t]} \phi_i(u) z(du\times ds), \  z \in \SC{L}(\E), \ i=1,2,\hdots\ri\}, \quad  y  = (y_1,y_2,\hdots).
 \end{align}	

\np
{\it Exponential tightness of $\{X_n\}$:} For simplicity of the calculation we will assume $d=1$. The calculations can be easily extended for higher $d$. By  It{\^o}'s lemma,
\begin{align*}
e^{nX^n(t+h)}& = e^{nX^n(t)} + \int_{\E\times[t,t+h)}e^{nX^n(s-)}(e^{b(X^n(s-),u)}-1)M_n(du\times ds).\\
\end{align*}
It follows that,
$$E\le(e^{n(X^n(t+h)-X^n(t))}\Big|\SC{F}^n_t\ri) =1 + E \le(\int_{\E\times[t,t+h)}e^{n(X^n(s-) -X^n(t))}(e^{b(X^n(s-),u)}-1)\pi(du)\mu_n(ds)\bigg|\SC{F}^n_t\ri).$$
Hence 
\begin{align*}
E(e^{n(X^n(t+h)-X^n(t))}|\SC{F}^n_t)& \leq 1+ nE \le(\int_{\E\times[0,h)}e^{n(X^n(t+s-) -X^n(t))}(e^{b(X^n(t+s-),u)}-1)\pi(du)ds\bigg|\SC{F}^n_t\ri)\\
& \leq 1 + n\sup_x\|b(x,\cdot)\|_\Phi \int_0^h E \le(e^{n(X^n(t+s-) -X^n(t))}\big|\SC{F}^n_t\ri) ds.
\end{align*}
By Gronwall's inequality 
$$E\le(e^{n(X^n(t+h)-X^n(t))}\big|\SC{F}^n_t\ri) \leq e^{n\sup_x\|b(x,\cdot)\|_\Phi h}.$$
Similarly, 
$$E\le(e^{-n(X^n(t+h)-X^n(t))}\big|\SC{F}^n_t\ri) \leq e^{n\sup_x\|b(x,\cdot)\|_\Phi h},$$
and it follows that  
$$E\le(e^{n|X^n(t+h)-X^n(t)|}\big|\SC{F}^n_t\ri) \leq 2e^{n\sup_x\|b(x,\cdot)\|_\Phi h}.$$
Now, the exponential tightness of $\left\{X^n\right\}$ follows from Feng and Kurtz \cite[Theorem 4.1]{FK06}.\\

\np
 {\it LDP and rate function of $\{X_n\}$}: Theorem \ref{sde} now readily establishes the LDP of $\left\{X^n\right\}$ with the rate function given by
$$J(x) = \inf \left\{I_{\alpha}(y): \dot{x}(s) = b(x(s),\cdot)\cdot \y(ds), \ \y \in \SC{D}\right\}.$$
where   $\y$ and $\SC{D}$ are defined by (\ref{ydef2}) and (\ref{DHdef}) respectively.
Now if $z\in \SC{L}(\E)$ and $(y_1,y_2,y_3,\hdots)$ are such that $ y_i(t) = \int_{\E\times [0,t]} \phi_i(u) z(du\times ds)$ and $ \dot{x}(s) = b(x(s),\cdot)\cdot \y(ds)$, then it is easy to see that
$$x(t) = x_0+ \int_{\E\times[0,t]}  b(x(s),u) z(du\times ds).$$
Consequently, after a few elementary steps, it follows that the rate function $J$ can be equivalently expressed as
$$J(x) = \inf \left\{\bar I(z): x(t) = x_0+ \int_{\E\times[0,t]}  b(x(s),u) z(du\times ds), \ z \in \SC{L}(E)\right\},$$
where $\bar I$ is defined in \eqref{rate_sanov}.
}
\end{example}

\vspace{1cm}

\begin{example}[LDP for random evolutions] \label{rndev}{\rm 
Let $\E$ be a complete and separable metric space. Let $\left\{\xi_k\right\}$ be a  $\left\{\SC{F}_k\right\}$-Markov chain in $\E$ with the transition kernel  $P$. Consider the evolution equation
$$X^n_{k+1} = X^n_k + \f{1}{n}b(X^n_k,\xi_{k+1}),$$
where $b:\R^d\times \E \rt \R^d.$
By a slight abuse of notation, put $\xi_n(t) = \xi_{\left[nt\right]}$, $X^n(t) = X^n_{\left[nt\right]}$.
We wish to find a LDP for $\left\{X^n\right\}$. Note that for each $n$, $\left\{X^n\right\}$ is adapted to the filtration $\left\{\SC{F}^n_t = \SC{F}_{\left[nt\right]}\right\}$.

Define 
$$M_n(\Gamma, t) = \sum_{k=1}^{\left[nt\right]}1_{\Gamma}(\xi_k),$$
and notice that $X^n$ satisfies
$$X^n(t) = X^n_0 + \f{1}{n} \int_{\E\times[0,t)} b(X^n(s),u)M_n(du\times ds).$$
Define $\tilde{M}_n$ by
$$\tilde{M}_n(\Gamma, t) = \sum_{k=1}^{\left[nt\right]}(1_{\Gamma}(\xi_k) - P(\xi_{k-1},\Gamma)).$$
Notice that for each $\Gamma$, $\tilde{M}_n(\Gamma, \cdot) $ is a martingale. Define the random measure $\mu_n$ by
$$\mu_n(\Gamma,t) =  \sum_{k=1}^{\left[nt\right]} P(\xi_{k-1},\Gamma).$$
Then,
$$E\le(\int_{\E\times [0,t)} H(u,s) M_n(du,ds)\ri) =  E(\int_{\E\times [0,t)} H(u,s) \mu_n(du,ds)).$$
Assume that there exists a $\sigma$-finite measure $\pi$  such that the Radon-Nikodym derivatives 
$$ \f{dP(x,\cdot)}{d\pi}(u) \leq C, \ \ \mbox{for all} \ x,u.$$ 

\np
{\it LDP of occupation measures of Markov chain}:
Assume that the transition matrix $P$ of the Markov chain $\left\{\xi_k\right\}$ satisfies the following uniform ergodicity condition (see Page 100, \cite{DS89}, Appendix B, \cite{FK06}): there exist $l,N \in \Z^+$ with $1\leq l \leq N$ and $M\geq 1$ such that
$$P^l(x,\cdot)\leq \f{M}{N}\sum_{m=1}^N P^m(y,\cdot), \ \mbox{ for } \ x,y \in E.$$
The above uniform ergodicity condition guarantees the existence of an unique invariant probability measure $\nu$ for $P$. 
Then $\left\{Y_n\right\}$ satisfies a LDP in $D_{\SC{M}_F(\E)}[0,\infty)$ with the rate function
\begin{eqnarray} \label{rate_Markovocc}
	\bar I(z)& = 
	\begin{cases}
		\int_0^\infty I_{P}(\dot{z}(s))\ ds, & z \in \SC{L}(\E),\\
		\infty, & \mbox{otherwise,}
	\end{cases}
\end{eqnarray}
where $\SC{L}(\E)$  is as in \eqref{def:LE}, and $I_P(\mu)$ is given by
$$I_P(\mu) =  -\inf_{f \in C_b(\E)^+} \int_E \log \f{Pf}{f}\ d\mu.$$

Consider  $M_n$ as an $\H^\#$-semimartingale, for $\H =M^\Phi(\pi) \subset L^\Phi(\pi)$ with $\Phi(x) = e^x -1$. Assume that $b$ is Lipschitz in the first argument and that 
$\sup_x\|b(x,\cdot)\|_\Phi < \infty$.\\ 
We now carry out the program of verification of LDP for $\{X_n\}$.\\

\np
{\it UET of $\{Y_n\}$:} Let $Z(u,s)$ be a cadlag process such that $\sup_{s\leq t}\|Z(\cdot,s)\|_\Phi \leq 1$.
Observe that without loss of generality we can take $Z \geq 0$ for our purpose.\\
Call  $H_n(t) \equiv  Z_-\cdot M_n (t)$. Then by It{\^o}'s lemma 
\begin{align*}
f(H_n(t))& =  f(H_n(0)) + \int_{\E\times \left[0,t\right]} f(H_n(s-) + Z(u,s-)) - f(H_n(s-))  \ M_n(du, ds).
\end{align*} 
Therefore, taking $f(x) = e^x$,
\begin{align*}
E(e^{H_n(t)})& = 1 + E\le(\int_{\E\times [0,t)} e^{H_n(s)}(e^{Z(u,s)} - 1)\mu_n(du,ds)\ri) \\
& = 1 + \sum_{k=1}^{\left[nt\right]} \int_\E E\le(e^{H_n(k/n)}(e^{Z(u,k/n)} - 1) P(\xi_k,du)\ri)\\
& \leq 1+ \sum_{k=1}^{\left[nt\right]}  e^{H_n(k/n)}\sup_{x}\int_\E (e^{Z(u,k/n)} - 1) P(x,du)\\
& = 1+ \sum_{k=1}^{\left[nt\right]}  e^{H_n(k/n)}\sup_{x}\int_\E (e^{Z(u,k/n)} - 1) \f{dP(x,\cdot)}{d\pi}(u)\pi(du)\\
& \leq  1+C\sum_{k=1}^{\left[nt\right]}  e^{H_n(k/n)}\int_\E (e^{Z(u,k/n)} - 1) \pi(du).
\end{align*} 
Now since $\|f\|_\Phi \leq 1 $ iff $\int\Phi(|f|)\ d\nu \leq 1$, we see from our assumption on the process $Z$ that
$\sup_{s\leq t}\int_E(e^{Z(u,s)} - 1) d\pi(u) \leq 1$. Thus,
\[E(e^{H_n(t)})\leq 1 + CE\int_0^t e^{H_n(s)}\nu_n(ds),\]
where $\nu_n\left[0,t\right] =\left[nt\right]$.
and by Gronwall's inequality
\[E(e^{H_n(t)}) = E(e^{Z_-\cdot M_n(t)}) \leq e^{C\left[nt\right]}.\]
Therefore
\begin{align*}
P(\sup_{s\leq t}n^{-1}Z_-\cdot M_n(s) > K )& =  P(Z_-\cdot M_n(t) > nK ) =P(e^{Z_-\cdot M_n(t)} > e^{nK} )\\
& \leq E(e^{Z_-\cdot M_n(t)}) /e^{nK} \leq e^{Cnt - nK}.
\end{align*}
Choosing $k(t,a) \equiv K= Ct+a$, we see that
\[\limsup_n\f{1}{n} \log\sup P\left[\sup_{s\leq t}n^{-1}|Z_-\cdot M_n(s)| > k(t,a)\right] \leq -a.\]\\
This proves that the sequence $\left\{Y_n \equiv M_n/n\right\}$ is UET.\\

\np
{\it LDP of $\{Y_n\}$:} Just as in Example \ref{Markovsde}, we have that 
for a pseudo-basis $\{(\phi_k,p_k)\}$ of $M^\Phi(\pi)$,  $\{Y_n(\alpha,\cdot) \equiv (Y_n(\phi_1,\cdot),Y_n(\phi_2,\cdot)),\cdots\}$ satisfies a LDP in $D_{R^\infty}[0,\infty)$with 
with the rate function given by
\begin{align}\label{rateY_occup}
	I_\alpha(y) = \inf\le\{\bar I(z): y_i(t) = \int_{\E\times [0,t]} \phi_i(u) z(du\times ds), \  z \in \SC{L}(\E), \ i=1,2,\hdots\ri\}, \quad  y  = (y_1,y_2,\hdots).
\end{align}	
Here $\bar I$ is defined in \eqref{rate_Markovocc}.\\

\np
{\it Exponential tightness of $\{X_n\}$}: By  It{\^o}'s lemma,
\begin{align*}
e^{nX^n(t+h)}& = e^{nX^n(t)} + \int_{\E\times[t,t+h)}e^{nX^n(s-)}(e^{b(X^n(s-),u)}-1)M_n(du\times ds).\\
\end{align*}
Recall that $\SC{F}^n_t = \SC{F}_{\left[nt\right]}$. It follows that,
$$E(e^{n(X^n(t+h)-X^n(t))}|\SC{F}^n_t) =1 + E\le( \int_{\E\times[t,t+h)}e^{n(X^n(s-) -X^n(t))}(e^{b(X^n(s-),u)}-1)\mu_n(du, ds)\bigg|\SC{F}^n_t\ri).$$
Hence 
\begin{align*}
E(e^{n(X^n(t+h)-X^n(t))}|\SC{F}^n_t)& \leq 1+  \sum_{nt\leq k < n(t+h))}\int_{\E}E\le(e^{n(X^n(k/n) -X^n(t))}(e^{b(X^n(k/n),u)}-1)P(\xi_{k-1},du)\big|\SC{F}_{\left[nt\right]}\right)\\
& \leq 1+ C\sum_{nt\leq k < n(t+h))}E \le(e^{n(X^n(k/n) -X^n(t))}\int_{\E}(e^{b(X^n(k/n),u)}-1)\pi(du)\Big|\SC{F}_{\left[nt\right]}\ri)\\
& \leq 1+ C \sup_x\|b(x,\cdot)\|_\Phi \sum_{nt\leq k <n(t+h))}E\le(e^{n(X^n(k/n) -X^n(t))}\big|\SC{F}_{\left[nt\right]}\ri).
\end{align*}
By Gronwall's inequality (Theorem 5.1, Page 498, \cite{EK86})
$$E\le(e^{n(X^n(t+h)-X^n(t))}\big|\SC{F}^n_t\ri) \leq e^{nC\sup_x\|b(x,\cdot)\|_\Phi h}$$
and as before,  exponential tightness of $\left\{X^n\right\}$ follows. \\

\np
{\it LDP and rate function of $\{X_n\}$}: Theorem \ref{sde} now readily establishes the LDP of $\left\{X^n\right\}$, and just like Example \ref{Markovsde},  the rate function  $J$ can be expressed as
$$J(x) = \inf \left\{ \int_0^\infty I_p(\dot{z}(s))ds: x(t) = x_0+ \int_{\E\times[0,t]}  b(x(s),u) z(du\times ds), \ z \in \SC{L}(E)\right\},$$
where $I_P$ is defined in \eqref{rate_Markovocc}.

}
\end{example}

\vspace{1cm}

Our next example is a Freidlin-Wetzell type small diffusion problem, where the driving integrator is space-time Gaussian white noise. A list of references for various LDP results on these types of models has already been mentioned in the introduction. Cho \cite{Cho06}  considers the case when the driving integrators are continuous orthogonal martingale random measures. 

\begin{example}[ Freidlin - Wentzell type LDP I ]\label{FWSDE1}{\rm 

Let $(E,r)$ be a complete and separable metric space and $\mu$ a sigma finite measure on $(E,\SC{B}(E))$. Let $\s: R^d\times E \rt \R^d $, be Lipschitz continuous in the sense that $\|\s(x,\cdot)  -\s(x',\cdot)\|_{L^2(\mu)} \leq L_\s|x-x'|$, and $L^2$-bounded, that is, $\|\s\|_\infty \equiv \sup_{x}\|\s(x,\cdot)\|_{L^2(\mu)} < \infty$. Suppose $b: \R^d \rt \R^d$ is a bounded Lipschitz function with bound $\|b\|_\infty = \sup_{x}|b(x)| < \infty$. Denote $W_n\equiv n^{-1/2}W$, where $W$ is the space time white noise on the measure space $(\E\times[0,\infty),\mu\ot\lambda_\infty)$. Recall that  $\lambda_\infty$ denotes the Lebesgue measure on $[0,\infty)$. Assume that $X_n$  satisfies 
$$X_n(t) = x_0  + \int_0^t b(X_n(s)) \ ds + \f{1}{\sqrt n} \int_{\E\times[0,t)}\s(X_n(s),u) W_n(ds\times du).$$\\

\np
{\it UET of $\{W_n\}$}:
Example \ref{ex_Gauss} shows that  $\left\{W_n\right\}$, indexed by $\H = L^2(\mu)$, is a sequence of uniformly exponentially tight $\H^\#$-semimartingales.\\

\np
{\it LDP of $\{W_n\}$}: As discussed in Example \ref{Gaussldp}, $\left\{n^{-1/2}W\right\}$ satisfies LDP,
and  if $\alpha=\left\{\phi_i\right\}$ forms a orthonormal basis, the rate function of $\left\{W_n(\alpha,\cdot)\equiv n^{-1/2}(W(\phi_1,\cdot),W(\phi_2,\cdot),\hdots)\right\}$ is given by 
\begin{eqnarray}\label{rateWn}
I_{\alpha}(y) &=
\begin{cases}
 \f{1}{2}\sum_{i=1}^\infty \int_0^\infty  |\dot{y}_i(t)|^2 \ dt,& \ \  \ y_i(t) = \int_0^t \dot{y}_i(s) \ ds \mbox{ for some } \dot{y}_i \in L^2(\R) \\
 \infty & \ \ \mbox{otherwise}.
 \end{cases}
 \end{eqnarray}

\np
{\it Exponential tightness of $\{X_n\}$}:  For simplicity of calculation, we assume $d=1$. But the following steps could be easily extended to $d>0$ (for example, by applying them component-wise). To show that $\left\{X_n\right\}$ is exponentially tight, observe that
by It\^{o}'s lemma,
\begin{align*}
 e^{nX_n(t+h)} = e^{nX_n(t)} + \int_t^{t+h} ne^{nX_n(s)} dX_n(s) + \f{1}{2}\int_t^{t+h} n^2e^{nX_n(s)} d\left[X_n\right]_s.
\end{align*}
It follows that 
\begin{align*}
 E\le(e^{n(X_n(t+h) - X_n(t))}\big| \SC{F}^n_t\ri)& = 1 + \f{1}{2}\int_{0}^{h} nE\le(e^{n(X_n(t+s) - X_n(t))} (\|\s(X_n(s),\cdot)\|_{L^2(\mu)}^2 + b(X_n(s))\Big | \SC{F}^n_t\ri)\  ds\\
& \leq 1 + \f{n(\|\s\|^2_\infty+\|b\|_\infty)}{2}\int_{0}^{h} E\le(e^{X_n(t+s) - X_n(t)}\big| \SC{F}^n_t\ri) \ ds.
\end{align*}
 Hence by Gronwall's inequality,
$$E\le(e^{n(X_n(t+h) - X_n(t))}\big| \SC{F}^n_t\ri)  \leq e^{n(\|F\|^2 + \|b\|)h}.$$
Similarly,
$$E\le(e^{-n(X_n(t+h)-X_n(t))}\big|\SC{F}^n_t\ri) \leq e^{n(\|F\|^2 + \|b\|)h},$$
and it follows that  
$$E\le(e^{n|X_n(t+h)-X_n(t)|}\big|\SC{F}^n_t\ri) \leq 2e^{n(\|F\|^2 + \|b\|)h}.$$
As before, Theorem 4.1 of Feng and Kurtz \cite{FK06} implies that $\left\{X_n\right\}$ is exponentially tight. \\

\np
{\it LDP and rate function of $\{X_n\}$}:  Put $\y(t) = \sum_k y_k(t)\phi_k$, where  $\left\{\phi_k\right\}$ is a chosen  complete orthonormal system of $L^2(\mu)$. Theorem \ref{sde} implies that $\left\{(X_n, n^{-1/2}W(\alpha,\cdot)\right\}$  satisfies a LDP in $C_{\R^d\times \R^\infty}[0,\infty)$ with the rate function 
\begin{eqnarray}
J_{\alpha}(x, y)& = 
\begin{cases}
I_{\alpha}(y), & x(t) = x_0 + \int_0^t b(x(s))ds + \s(x,\cdot)\cdot \y(t), \ \ \  \y \in\SC{D},\\
\infty, & \mbox{otherwise.}
\end{cases}
\end{eqnarray}
Letting $\psi(u,t) = \sum_{k} \dot y_k(t)\phi_k(u)$, observe that $ \s(x,\cdot)\cdot \y(t) = \int_{\E\times [0,t]}  \s(x(s),u) \psi(u,s) \mu(du) ds$ and 
$\int_{\E\times [0,\infty]}|\psi(u,t)|^2\mu(du)dt = \sum_{i=1}^\infty \int_0^\infty  |\dot{y}_i(t)|^2 \ dt.$ Consequently, it is easy to conclude that $X_n$ satisfies a LDP in  $C_{\R^d}[0,\infty)$ with the rate function given by
 \begin{align*}
 	J(x) =& \inf\bigg\{\f{1}{2}\int_{\E\times [0,\infty]}|\psi(u,t)|^2\mu(du)dt: x(t) = x_0 + \int_0^t b(x(s))ds+  \int_{\E\times [0,t]}  \s(x(s),u) \psi(u,s) \mu(du) ds,\\
 &	\hs*{1cm} \psi \in L^2(E\times[0,\infty))\bigg\}.
 \end{align*}

}
\end{example}

\begin{example} [\bf Freidlin - Wentzell type LDP II] 
\label{FWSDE2}
{\rm

 Consider the SDE
  \begin{align}
  \label{Jumpdiff}
X_n(t)& = x_0 + n^{-1/2}\int_{\E_1\times[0,t]}\sigma_1(X_n(s),u) W(ds\times du) + \int_0^t b(X_n(s)) \ ds\\
& \ \ \  + n^{-1}\int_{\E_2\times[0,t]}\sigma_2(X_n(s),v) \xi(n \ ds\times dv).
\end{align}
Here, as before, $W$ is a space time white noise on $(\E_1\times[0,\infty),\mu\ot\lambda_\infty)$, $\xi$ is a Poisson random measure on $\E_2\times[0,\infty)$ with mean measure $\nu\ot\l_\infty$, and is independent of  space-time white noise $W$. Assume that $\s_1$ and $b$ satisfy the same conditions $\s$ and $b$ did in Example \ref{FWSDE1}. For $\sigma_2: R^d\times E_2 \rt \R^d $, assume that 
$$|\s_2(x,v)| \leq M_2(v), \quad  |\s_2(x,v) - \s_2(x',v)| \leq L_2(v)|x - x'|$$
 where $M_2$ and $L_2$ are in $M^{\Phi}(\nu)$, with $\Phi(x) = e^x -1$ (see (\ref{MTspace})). In other words, we assume that
 $$\int_{E_2}\le(e^{aM_2(v)} - 1\ri)\nu(dv) < \infty, \ \mbox{ and  }\  \int_{E_2}\le(e^{aL_2(v)} - 1\ri)\nu(dv) < \infty, \quad \mbox{ for all } a>0.$$
 
  Put $Y_n =(n^{-1/2}W, n^{-1}\xi_n)$ with $\xi_n(A\times[0,t]) = \xi(A\times[0,nt])$ and take the indexing space $\H =(L^2(\mu),M^{\Phi}(\nu) ) $ with $\|\cdot\|_\H = \|\cdot\|_{L^2(\mu)} + \|\cdot\|_\Phi$. Examples \ref{ex_Gauss} and \ref{ex_Poisson} establish that $Y_n$ is UET.\\

\np
{\it LDP of $\{Y_n\}$}: Let $\{\phi_k^1\}$ be a complete orthonormal system of $L^2(\mu)$ and $\{(\phi_k^2, p_k^2)\}$ a pseudo-basis of $M^{\Phi}(\nu)$.  By Example \ref{poildp}, $\left\{n^{-1}\xi_n(\alpha^2,\cdot) \equiv n^{-1}(\xi_n(\phi_1^2,\cdot), \xi_n(\phi_2^2,\cdot), \hdots)\ri\}$ satisfies a LDP in $D_{\R^\infty}[0,T]$ with the rate function given by $\tilde I_{\alpha^2}$
\begin{align*}
 \tilde I_{\alpha_2}(y) = \inf\le\{L_T(\varphi): y_i(t) = \int_{\E_2\times [0,T]} h_i(v)\varphi(z,s)\nu(dv)ds, \ i=1,2,\hdots,\ri\}
\end{align*}
where $L_T$ is defined in \eqref{LT_poi}. Also, $\left\{W_n(\alpha^1,\cdot) \equiv (W_n(\phi_1^2,\cdot), W_n(\phi_2^2,\cdot), \hdots)\ri\}$ satisfies a LDP in $C_{\R^\infty}[0,T]$ with rate function $I_{\alpha_1}$ defined by \eqref{rateWn} (with the small notational change of $\alpha$ to $\alpha_1$ and integration ranging from $0$ to $T$). And thus by independence of $W$ and $\xi$, $\le\{Y_n(\alpha^1, \alpha^2, \cdot) = \le(W_n(\alpha^1,\cdot), n^{-1}\xi_n(\alpha^2,\cdot)\ri)\ri\}$ satisfies a LDP in $C_{\R^\infty}[0,T]\times D_{\R^\infty}[0,T]$ with the rate function given by $I_{\alpha_1}+ \tilde I_{\alpha_2}$.\\

%
%
%
\np
{\it Exponential tightness of $\{X_n\}$}: Similar to Examples \ref{Markovsde} and \ref{FWSDE1}, It{\^o}'s formula and Gronwall's inequality  prove that $E(\exp(n|X_n(t+h)-X_n(t)|)) = O(e^{Cnh})$. This verifies the exponential tightness of the solution. As before, Theorem \ref{sde2} gives the associated rate function for $\left\{X_n\right\}$.\\

\np
{\it LDP and rate function of $\{X_n\}$}:   An application of Theorem \ref{sde} implies that $\{X_n\}$ satisfies a LDP and it could be easily checked, after a few simple steps, that the associated rate function is given by
\begin{align*}
 	J(x) =&\inf \bigg\{L_T(\varphi)+\f{1}{2}\int_{ \E\times [0,T]}|\psi(u,t)|^2\mu(du)dt: x(t) = x_0 + \int_0^t b(x(s))ds\\
& \hs*{1cm}	+  \int_{\E_1\times [0,t]}  \s_1(x(s),u) \psi(u,s) \mu(du) ds
 + \int_{\E_2\times [0,t]}  \s_2(x(s),v) \varphi(v,s) \nu(dv) ds, \\
 & \hs*{1cm} \psi \in L^2(E\times[0,T]),\ L_T(\varphi)<\infty\bigg\}.
 \end{align*}

Some of the conditions like boundedness of $b$, $\s_1$ and $\s_2$ (in the above sense) made  the proof of exponential tightness of $\{X_n\}$ simpler, but with a little extra work they could be relaxed to that having linear growth.

}
\end{example}

\begin{example}[Two-scale hybrid diffusion process] {\rm We now consider a hybrid diffusion processes of the form:
$$X_n(t)  =x_0 + \int_{0}^t b(X_n(s), Y_n(s)) ds + \f{1}{\sqrt n}\int_{\E \times [0,t]}\s (X_n(s),u)W(du\times ds).$$
Here $W$ is the space time white noise on the measure space $(\E\times[0,\infty),\mu\ot\lambda_\infty)$, $\l_\infty$ is the Lebesgue measure on $[0,\infty)$, $\mu$ is a $\s$-finite measure on $E$, $Y_n(t) =  Y(nt)$, where $Y$ is an ergodic Markov process on a compact metric space $\U$ with the unique invariant measure $\pi$ and which is independent of $W$. These types of processes are characterized by property that the dynamics of the slow diffusion $X_n$  is modulated by the fast moving Markov process $Y_n$ \cite{YinZhu10}. Under some standard assumptions on the coefficients, it is easy to see that an averaging principle holds: that is $X_n \rt X$ where
$$X(t) = x_0 + \int_0^t \bar b(X(s))\ ds,$$
where $\bar b(x) = \int_{\U} b(x,v)\pi(dv).$ We are interested in a LDP for $X_n$ and we briefly describe how the general program of verification of LDP can be used for this purpose. Some problems related to LDP for these types of systems, where the slow process has no diffusion term, have been considered in  \cite{FW98, HYZ11}. We assume that $b: \R^d\times \U \rt \R^d$ is bounded and Lipschitz in the first component, $\s: \R^d\times \E \rt \R^d$ is Lipschitz continuous in the sense that $\|\s(x,\cdot)  -\s(x',\cdot)\|_{L^2(\mu)} \leq L_\s|x-x'|$, and $L^2(\mu)$-bounded, that is, $\sup_{x}\|\s(x,\cdot)\|_{L^2(\mu)} <\infty.$ Next, we assume that the sequence of occupation measures $\G_n$ defined by $\G_n(A\times[0,t]) =\f{1}{n}\int_0^{nt}1_{\{Y(s)\in A\}}\ ds = \int_0^t 1_{\{Y_n(s)\in A\}}\ ds$ satisfies a LDP in $C_{\SC{M}_F(\U)}[0,\infty)$ with a rate function $\bar I$. Typically, in many examples (see \cite{FK06}) $\bar I$ is given by
\begin{eqnarray} \label{rate_Markovocc2}
	\bar I(z)& = 
	\begin{cases}
		\int_0^\infty I_{\SC{A}}(\dot{z}(s))\ ds, & z \in \SC{L}(\U),\\
		\infty, & \mbox{otherwise,}
	\end{cases}
\end{eqnarray}
where $I_{\SC{A}}(\nu)$ is given by
$$I_{\SC{A}}(\nu) =  -\inf_{f \in D(\SC{A})\cap C_b(\U)^+} \int_\U  \f{\SC{A} f}{f}\ d\nu.$$ 
Here $\SC{A}$ is the generator of the Markov process $Y$, $D(\SC{A})$ is the domain of $\SC{A}$ and  $\SC{L}(\U)$ is as in Example \ref{Markovsde}.

Note that $X_n$ satisfies
$$X_n(t)  =x_0 + \int_{\U \times [0,t]} b(X_n(s), v) \G_n(dv\times ds) + \f{1}{\sqrt n}\int_{\E \times [0,t]} \s (X_n(s),u)W(du\times ds).$$

Let $C(\U)$ be equipped with the sup norm: $\|f\|_{\infty}  =\sup_{v\in \U}|f(v)|$. Put $Y_n =(\G_n, n^{-1/2}W)$ and take the indexing space $\H =(C(\U),L^2(\mu) ) $ with $\|(f,h)\|_\H = \|f\|_{\infty} + \|h\|_{L^2(\mu)}$.  Example \ref{ex_Gauss} established that $n^{-1/2}W$ is UET. It is also easy to see that $\G_n$ is UET. Indeed, let $Z$ be a cadlag process taking values in $C(\U)$ such that $\sup_{s\leq t} \|Z(\cdot,s)\|_{\infty} \leq 1$. Then $\sup_{r\leq t} |Z\cdot \G_n (r)| = \sup_{r\leq t} \le|\int_0^rZ (Y_n(s),s) ds\ri| \leq t$, and it follows that
$$\limsup_n\f{1}{n} \log\sup P\left[\sup_{s\leq t}|Z\cdot \G_n(s)| > k(t,a)\right] \leq -a.$$
with $k(t,a) = t+a$.
This proves that the sequence $\left\{\G_n\right\}$, and consequently, $\{Y_n =(\G_n, n^{-1/2}W)\}$ is UET. Exponential tightness of $X_n$ follows by calculations similar to that in Example \ref{FWSDE1}, and like Examples \ref{rndev}  and    \ref{FWSDE1}, we  conclude that $X_n$ satisfies a LDP in $C_{\R^d}[0,\infty)$ with the rate function given by
\begin{align*}
J(x) = & \inf \left\{ \bar I(z) +\f{1}{2} \int_{\E\times[0,\infty)} |\psi(u,s)|^2\mu(du)ds : x(t) = x_0+ \int_{\U\times[0,t]}  b(x(s),v) z(dz\times ds) \ri.\\
&\le. \hs*{1cm} +\int_{\E\times[0,t]} \s(x(s),u)\psi(u,s)\mu(du) ds, \ \psi \in L^2(\E\times[0,\infty)), \ z \in \SC{L}(\U)\ri\}.
\end{align*}
Some of the conditions like boundedness of $b$ and $\s$ could be relaxed to that having linear growth without too much difficulty. Also, the program could be adopted to cover the case of $Y$ taking values in a non-compact $\U$ and $\s$ depending on the states of both the slow process $X_n$ and fast process $Y_n$. However, the corresponding analysis requires more careful estimates and deserves a separate paper-long treatment.

}	
\end{example}

\setcounter{section}{0}
\setcounter{theorem}{0}
\setcounter{equation}{0}
\renewcommand{\theequation}{\thesection.\arabic{equation}}

\appendix
\section*{Appendix}
\renewcommand{\thesection}{A} 

\subsection{Some definitions}

\begin{definition}\label{def:exptight}
Let $U$ be a Polsh space and $\left\{\mu_n\right\}$  a sequence of probability measures on $(U, \SC{U})$, where $\SC{U}$ is the Borel $\sigma$-algebra on $U$ containing all the compact subsets of $U$.
$\left\{\mu_n\right\}$ is {\bf exponentially tight} if for every $a > 0$, there exists a compact set $K_a$ such that
$$\limsup_{n\rt \infty} \frac{1}{n} \log \mu_n(K_a^c) \leq -a.$$
\end{definition}

\begin{definition}\label{def:expcpt}
Let $E$ be a complete and separable metric space. A sequence $\{X_{n}\}$ satisfies the {\bf exponential compact containment condition} if for each $a,T>0$, there exists a compact set $C_{a,T} \subset E$ such that
$$\limsup_{n\rt\infty}\frac{1}{n} \log P(X_{n}(t) \notin C_{a,T} \ \mbox{ for some }  t\leq T) \leq -a.$$
\end{definition}
Clearly, exponential tightness implies exponential compact containment condition and the latter implies the former under the additional requirement of exponential tightness of  the sequence of real valued processes $\{f(X_n)\}$, for every $f$ belonging to an appropriate function family \cite[Theorem 4.4]{FK06}. 

\subsection{A generalized contraction principle}
\begin{theorem}
\label{cont_mod}
Let $(E,r)$ and $(E',r')$ be two complete, separable metric spaces. Let $\left\{X_n\right\}$ be a sequence of random vectors taking values in  $E$. Suppose that $\left\{X_n\right\}$satisfies a large deviation principle with the good rate function $I$. Assume that $f,f_n: E\rt E'$ are measurable functions satisfying:
\begin{itemize}
\item for all $x \in E$ with $I(x)<\infty$, $x_n\rt x$ implies that $f_n(x_n)\rt f(x).$
\end{itemize}
Then $\left\{f_n(X_n)\right\}$ satisfies a large deviation principle with the rate function given by 
$$I'(y) =\inf\left\{I(x):f(x)=y\right\}.$$
\end{theorem}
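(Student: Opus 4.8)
The plan is to prove the two large deviation bounds directly, relying on the exponential tightness of $\{X_n\}$ (which follows from goodness of $I$ on the Polish space $E$) rather than on any compactness of the images $f_n(K_a)$, together with a local, uniform version of the convergence hypothesis. Two elementary consequences of the hypothesis should be recorded first. \emph{(a) Uniform convergence on neighbourhoods:} if $I(x)<\infty$, then for every $\varepsilon>0$ there exist an open neighbourhood $U$ of $x$ and an $N$ with $r'(f_n(x'),f(x))<\varepsilon$ for all $n\ge N$ and $x'\in U$; otherwise one extracts $x_m\to x$ with $r'(f_{n_m}(x_m),f(x))\ge\varepsilon$ along $n_m\uparrow\infty$, and interleaving $(x_m)$ with the constant sequence $x$ produces a sequence converging to $x$ that violates the hypothesis. \emph{(b) $f$ is continuous on $\{I<\infty\}$:} taking the constant sequence in the hypothesis gives $f_n(x)\to f(x)$ whenever $I(x)<\infty$, and a diagonal argument combining this with (a) then gives $f(x_k)\to f(x)$ whenever $x_k\to x$ with all points in $\{I<\infty\}$.

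Using (b), together with lower semicontinuity and compactness of the level sets of $I$, I would first check that $I'(y)=\inf\{I(x):f(x)=y\}$ is itself a good rate function: given $y_k$ with $I'(y_k)\le c$, choose $x_k$ with $f(x_k)=y_k$ and $I(x_k)\le c+\tfrac1k$, extract $x_{k_j}\to x$ inside the compact set $\{I\le c+1\}$, and conclude from $I(x)\le\liminf_j I(x_{k_j})\le c$ and $y_{k_j}=f(x_{k_j})\to f(x)$ (by (b)) that the limit of the subsequence lies in $\{I'\le c\}$.

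For the \textbf{upper bound}, fix a closed $F\subset E'$, a real $c<I'(F)$ (any $c>0$ if $I'(F)=\infty$), and $a>0$, and pick a compact $K_a$ with $\limsup_n\tfrac1n\log P(X_n\notin K_a)\le-a$. Classify each $x\in K_a$: if $I(x)>c$, lower semicontinuity of $I$ yields a neighbourhood $U_x$ with $\inf_{\overline{U_x}}I>c$, so the LDP upper bound gives $\limsup_n\tfrac1n\log P(X_n\in U_x)\le-c$; if $I(x)\le c$, then $I(x)<I'(F)=\inf\{I(x'):f(x')\in F\}$ forces $f(x)\notin F$, so by (a) there are a neighbourhood $U_x$ and an $N_x$ with $f_n(x')\notin F$ for all $n\ge N_x$, $x'\in U_x$, hence $P(X_n\in U_x,\,f_n(X_n)\in F)=0$ eventually. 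Covering $K_a$ by finitely many such $U_{x_i}$, summing, and adding $P(X_n\notin K_a)$ yields $\limsup_n\tfrac1n\log P(f_n(X_n)\in F)\le\max(-c,-a)$; letting $c\uparrow I'(F)$ and $a\to\infty$ gives $\le-I'(F)$. For the \textbf{lower bound}, fix an open $G\subset E'$, a point $y\in G$ with $I'(y)<\infty$, and $\varepsilon>0$; pick $x$ with $f(x)=y$ and $I(x)<I'(y)+\varepsilon$, choose $\varepsilon'$ with $B'(f(x),\varepsilon')\subset G$, and apply (a) to obtain an open neighbourhood $U$ of $x$ with $f_n(x')\in G$ for $x'\in U$ and $n$ large. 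Then $\liminf_n\tfrac1n\log P(f_n(X_n)\in G)\ge\liminf_n\tfrac1n\log P(X_n\in U)\ge-\inf_U I\ge-I(x)\ge-(I'(y)+\varepsilon)$; sending $\varepsilon\to0$ and taking the supremum over $y\in G$ completes the proof.

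The main obstacle is not the covering argument but the bookkeeping around the fact that the maps $f_n$ vary with $n$ while $f$ is only controlled along sequences that land in $\{I<\infty\}$: one must verify carefully that statement (a) and the continuity of $f$ on $\{I<\infty\}$ really do follow from the stated hypothesis (via the interleaving and diagonal arguments), and note that points with $I(x)=\infty$ never require the convergence property, since they automatically fall in the ``$I(x)>c$'' branch of the upper bound.
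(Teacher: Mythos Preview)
The paper does not prove this result; it simply cites Theorem 2.4 of \cite{Gar04}. Your argument is the standard extended--contraction--principle proof and is essentially correct: the reductions (a) and (b) are exactly the right way to upgrade the sequential hypothesis to a locally uniform one, the lower bound is clean, and the covering argument for the upper bound is sound.

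There is one soft spot. You invoke exponential tightness of $\{X_n\}$ ``which follows from goodness of $I$ on the Polish space $E$.'' That implication is standard when $E$ is locally compact, but for a general Polish space it is not a theorem one can quote without further argument. Fortunately your proof does not need it. In the upper bound, instead of covering an exponential--tightness set $K_a$, cover the compact level set $\{I\le c\}$ directly: every $x$ there has $I(x)\le c<I'(F)$, so $f(x)\notin F$ and (a) supplies the neighbourhoods $U_x$. If $V$ is the union of a finite subcover, then for large $n$ one has $\{f_n(X_n)\in F\}\subset\{X_n\in V^c\}$, and $V^c$ is closed with $V^c\subset\{I>c\}$, so the LDP upper bound for $\{X_n\}$ gives $\limsup_n\tfrac1n\log P(f_n(X_n)\in F)\le -c$; let $c\uparrow I'(F)$. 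This removes the first branch of your dichotomy and the appeal to exponential tightness in one stroke.
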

See \cite[Theorem 2.4]{Gar04}.

\subsection{Orlicz spaces}
\label{Orlicz}
The standard reference for this section is Rao and Ren \cite{RR91} . Some results presented here are taken from Terrence Tao's lecture notes on Harmonic Analysis \cite{Tao247A}.\\
Let $U$ be a complete and separable metric space, and $ \SC{U}$ a $\sigma$-algebra on $U$. 
Observe that for the space $L^p(U,\mu), 1\leq p<\infty$,
\[\|f\|_p \leq 1 \mbox{ iff } \int_U |f|^p \ d\mu \leq 1. \]
The motivation for introducing Orlicz spaces is to find more general function $\Phi: \R \rt \R^+$ satisfying certain conditions, such that the above kind of statement is true, that is we want to find a norm
$\|f\|_\Phi$ such that
\[\|f\|_\Phi \leq 1 \mbox{ iff } \int_U \phi(|f|) \ d\mu \leq 1. \]

\begin{definition}
 Let $\Phi: \R \rt \R^+$ be an even, increasing and convex function with $\Phi(0) = 0$ and $\lim_{x\rt \infty} \Phi(x) =\infty.$ Such a $\Phi$ is called a {\bf Young's function}.

Define the norm $\|\cdot\|_{\Phi}$ by
\[\|f\|_\Phi \equiv \inf\left\{A>0: \int_U \Phi(|f|/A) \ d\mu \leq 1\right\}.\]
 $\|\cdot\|_{\Phi}$ is called  the {\bf Orlicz norm}, and the corresponding space
\[L^\Phi(U,\mu) \equiv \left\{f:\|f\|_\Phi < \infty\right\} \]
is called the {\bf Orlicz space}.
\end{definition}

\begin{lemma}
 Orlicz spaces are Banach spaces.
\end{lemma}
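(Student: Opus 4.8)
The plan is to proceed in two stages: first show that $\|\cdot\|_\Phi$ is a genuine norm on $L^\Phi(U,\mu)$, and then establish completeness by the standard absolutely-convergent-series argument adapted from the $L^p$ setting. A preliminary observation used throughout is that the infimum in the definition of $\|f\|_\Phi$ is attained when $\|f\|_\Phi>0$: if $A_n\downarrow\|f\|_\Phi$ with $\int_U\Phi(|f|/A_n)\,d\mu\le1$, then continuity and monotonicity of $\Phi$ together with the monotone convergence theorem give $\int_U\Phi(|f|/\|f\|_\Phi)\,d\mu\le 1$. Equivalently, $\|h\|_\Phi<A$ implies $\int_U\Phi(|h|/A)\,d\mu\le1$.

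For the norm axioms, homogeneity $\|cf\|_\Phi = |c|\,\|f\|_\Phi$ is immediate from the change of variable $A \mapsto A/|c|$ in the defining infimum. Positive definiteness uses that $\Phi$ is increasing with $\Phi(x)\to\infty$: if $\|f\|_\Phi = 0$ then $\int_U \Phi(|f|/A)\,d\mu \le 1$ for every $A>0$, and letting $A\downarrow 0$ and invoking monotone convergence forces $f=0$ $\mu$-a.e. The triangle inequality is where convexity of $\Phi$ enters: given $f,g$ with $\|f\|_\Phi = A>0$, $\|g\|_\Phi = B>0$ (the degenerate cases being trivial), convexity gives pointwise
\[
\Phi\!\left(\frac{|f+g|}{A+B}\right) \le \frac{A}{A+B}\,\Phi\!\left(\frac{|f|}{A}\right) + \frac{B}{A+B}\,\Phi\!\left(\frac{|g|}{B}\right),
\]
and integrating yields $\int_U \Phi(|f+g|/(A+B))\,d\mu \le 1$, hence $\|f+g\|_\Phi \le A+B = \|f\|_\Phi + \|g\|_\Phi$.

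For completeness, let $\{f_n\}$ be Cauchy in $\|\cdot\|_\Phi$ and pass to a subsequence $\{f_{n_k}\}$ with $\|f_{n_{k+1}}-f_{n_k}\|_\Phi \le 2^{-k}$. Set $g_m = |f_{n_1}| + \sum_{k=1}^m |f_{n_{k+1}}-f_{n_k}|$ and $g = \lim_m g_m$. By the triangle inequality $\|g_m\|_\Phi \le M := \|f_{n_1}\|_\Phi + 1$ for all $m$, so $\int_U \Phi(g_m/M)\,d\mu \le 1$; letting $m\to\infty$ and using monotone convergence, $\int_U \Phi(g/M)\,d\mu \le 1$, i.e. $g\in L^\Phi$ with $\|g\|_\Phi \le M$. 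Since $\Phi(x)\to\infty$, integrability of $\Phi(g/M)$ forces $g<\infty$ $\mu$-a.e., so the series $f_{n_1} + \sum_k (f_{n_{k+1}}-f_{n_k})$ converges absolutely $\mu$-a.e. to a measurable limit $f$ with $|f|\le g$; in particular $f\in L^\Phi$. To see $f_{n_k}\to f$ in norm, fix $\varepsilon>0$; for all $j>k$ large enough $\|f_{n_j}-f_{n_k}\|_\Phi<\varepsilon$, hence $\int_U \Phi(|f_{n_j}-f_{n_k}|/\varepsilon)\,d\mu \le 1$, and Fatou's lemma (letting $j\to\infty$ along the a.e.-convergent subsequence, using continuity of $\Phi$) gives $\int_U \Phi(|f-f_{n_k}|/\varepsilon)\,d\mu \le 1$, so $\|f-f_{n_k}\|_\Phi\le\varepsilon$ for $k$ large. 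Finally, a Cauchy sequence with a norm-convergent subsequence converges to the same limit, so $f_n\to f$ in $L^\Phi$.

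The steps are largely routine transcriptions of the $L^p$ proofs; the one point requiring genuine care — and the main obstacle — is the consistent use of convexity of $\Phi$, which simultaneously delivers the triangle inequality and the bound $\|g_m\|_\Phi\le M$, combined with the observation that the defining infimum is attained. That observation is what allows monotone convergence and Fatou to be applied cleanly to the integrals $\int_U\Phi(\,\cdot\,/A)\,d\mu$ and then transferred back to statements about the norm.
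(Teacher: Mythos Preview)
Your proof is correct and is exactly the standard Luxemburg-norm argument one finds in, e.g., Rao and Ren. The paper itself does not prove this lemma at all: it is stated as a known fact, with the reference \cite{RR91} given at the top of the section as the source, so there is no ``paper's own proof'' to compare against. Your write-up supplies precisely the details the paper elected to omit.
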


\np
The following are a few examples of Orlicz spaces. 
\begin{itemize}
 \item $L^p(U,\mu)$ forms an Orlicz space for $1\leq p <\infty$, with $\Phi(x) = |x|^p$.
 \item The spaces $L^\Phi(U,\mu)$ with $\Phi(x)\equiv e^x - 1 $, or $\Phi(x) = x\log(x+2)$.
\end{itemize}
%
Observe that 
$$L^\Phi(U,\mu)  = \left\{f: \int \Phi(af) \ d\mu < \infty, \ \mbox{ for some } \ a>0\right\}.$$
Let 
\begin{align}
 \label{MTspace}
M^\Phi(U,\mu) = \left\{f: \int \Phi(af) \ d\mu < \infty, \ \mbox{ for all } \ a>0\right\}.
\end{align}
The space $M^\Phi(U,\mu)$ was introduced by Morse and Transue (1950), and is sometimes referred to as \textbf{Morse-Transue space} \cite{RR91}.

\begin{lemma}
Let $\Phi$ be a continuous Young's function. Then $M^\Phi(U,\mu)$ is a closed linear subspace of $L^\Phi(U,\mu) $.
\end{lemma}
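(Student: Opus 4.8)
The plan is to verify two things separately: first, that $M^\Phi(U,\mu)$ is a linear subspace of $L^\Phi(U,\mu)$, and second, that it is closed with respect to the Orlicz norm $\|\cdot\|_\Phi$. Both parts rest entirely on the convexity and monotonicity of the Young's function $\Phi$ together with $\Phi(0)=0$; the inclusion $M^\Phi(U,\mu)\subset L^\Phi(U,\mu)$ is immediate from the definitions, since "$\int_U\Phi(af)\,d\mu<\infty$ for all $a>0$" trivially implies the same for some $a>0$.

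For the linearity, I would first note $0\in M^\Phi(U,\mu)$ because $\Phi(0)=0$, and that closure under scalar multiplication is clear: if $f\in M^\Phi(U,\mu)$ and $c\in\R$, then for every $a>0$ we have $\int_U\Phi(acf)\,d\mu=\int_U\Phi((a|c|)f)\,d\mu<\infty$ (or it is $0$ when $c=0$). For closure under addition, given $f,g\in M^\Phi(U,\mu)$ and $a>0$, I would write $a(f+g)=\tfrac12(2af)+\tfrac12(2ag)$ and apply convexity of $\Phi$,
$$\Phi(a(f+g))\le \tfrac12\,\Phi(2af)+\tfrac12\,\Phi(2ag),$$
then integrate and use that $\int_U\Phi(2af)\,d\mu$ and $\int_U\Phi(2ag)\,d\mu$ are both finite. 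Hence $f+g\in M^\Phi(U,\mu)$.

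For the closedness, I would take $\{f_n\}\subset M^\Phi(U,\mu)$ with $f_n\to f$ in $\|\cdot\|_\Phi$ and show $\int_U\Phi(af)\,d\mu<\infty$ for every $a>0$. Fix $a>0$ and pick $n$ with $\|f-f_n\|_\Phi<\tfrac{1}{2a}$; then from the definition of the Orlicz norm there is $A<\tfrac{1}{2a}$ with $\int_U\Phi(|f-f_n|/A)\,d\mu\le 1$, and since $1/A>2a$ and $\Phi$ is increasing, $\int_U\Phi(2a|f-f_n|)\,d\mu\le 1$. Decomposing $af=\tfrac12\bigl(2a(f-f_n)\bigr)+\tfrac12(2af_n)$ and using convexity again gives
$$\int_U\Phi(af)\,d\mu\le \tfrac12\int_U\Phi(2a(f-f_n))\,d\mu+\tfrac12\int_U\Phi(2af_n)\,d\mu\le \tfrac12+\tfrac12\int_U\Phi(2af_n)\,d\mu<\infty,$$
the last term being finite because $f_n\in M^\Phi(U,\mu)$. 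Since $a>0$ was arbitrary, $f\in M^\Phi(U,\mu)$.

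There is no deep difficulty in this argument; the one point that needs care is the passage between the quantitative norm estimate "$\|h\|_\Phi$ small" and an integral bound "$\int_U\Phi(\lambda h)\,d\mu\le 1$" for a suitable $\lambda$ — one must not assume that the infimum defining $\|h\|_\Phi$ is attained, but instead choose $A$ strictly below the relevant threshold and exploit monotonicity of $\Phi$. I would also remark that continuity of $\Phi$ is not actually used here (it is assumed for consistency with the surrounding discussion); convexity, monotonicity, and $\Phi(0)=0$ suffice throughout.
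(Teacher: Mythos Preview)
Your proof is correct. The paper itself does not prove this lemma; it is stated without proof in the appendix as a background fact from Orlicz space theory, with Rao and Ren \cite{RR91} given as the standard reference. Your argument is the standard one and is self-contained: the convexity trick $a(f+g)=\tfrac12(2af)+\tfrac12(2ag)$ handles both the linearity and the closedness, and you correctly avoid assuming the infimum in the Luxemburg norm is attained. Your closing remark that continuity of $\Phi$ is not actually needed is also accurate---convexity, monotonicity, evenness, and $\Phi(0)=0$ carry the whole argument.
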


In general,  many interesting Orlicz spaces might not  be separable, for example, $L^\Phi(U,\mu)$ with $\Phi(x) =e^x-1$ is not separable. However, for $M^\Phi$, we have the following theorem (Page 87, \cite{RR91}):

\begin{lemma}
Let $(U,\SC{U})$ be a complete and separable measure space, and $\Phi$ a continuous Young's function with $\Phi(x) =0$ iff $x =0$. Then the space $M^\Phi(U,\mu)$ is separable.
\end{lemma}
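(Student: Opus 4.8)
The plan is to exhibit an explicit countable dense subset of $M^\Phi(U,\mu)$, following the standard template for $L^p$-separability while using the defining property of the Morse--Transue space in an essential way. Throughout I use that in the setting of this paper $U$ is a complete separable metric space and $\mu$ is $\sigma$-finite, so $\SC{U} = \SC{B}(U)$ is countably generated and there is an increasing sequence $U_k \uparrow U$ with $\mu(U_k) < \infty$. I also record a preliminary observation that will be used repeatedly: because $\Phi$ is continuous, strictly increasing (convexity together with $\Phi(0)=0$ and $\Phi(x)=0\iff x=0$ forces strict monotonicity), and tends to $\infty$, it is a bijection of $[0,\infty)$ onto itself, and a direct computation gives $\|1_E\|_\Phi = 1/\Phi^{-1}(1/\mu(E))$ for $0<\mu(E)<\infty$, which tends to $0$ as $\mu(E)\to 0$.

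The first and main step is to show that simple functions (finite $\R$-linear combinations of indicators of sets of finite $\mu$-measure) are dense in $M^\Phi(U,\mu)$. Fix $f \in M^\Phi$ and $\e>0$, and set $f_k = f\, 1_{\{|f|\le k\}}\, 1_{U_k}$, so that $|f - f_k|\le |f|$ and $f_k \to f$ pointwise. Since $f\in M^\Phi$ we have $\int_U \Phi(|f|/\e)\, d\mu < \infty$, so dominated convergence yields $\int_U \Phi(|f-f_k|/\e)\, d\mu \to 0$; for $k$ large this integral is $\le 1$, which by the definition of the Orlicz norm means $\|f-f_k\|_\Phi \le \e$. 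It then remains to approximate the bounded, finitely supported function $f_k$ by a simple function: partitioning the range of $f_k$ into finitely many short intervals produces a simple $s$ supported in $U_k$ with $\|f_k - s\|_\infty \le \eta$, whence $|f_k-s| \le \eta\,1_{U_k}$ pointwise and $\|f_k - s\|_\Phi \le \eta\,\|1_{U_k}\|_\Phi$, which is as small as we wish. Combining the two approximations gives density of simple functions.

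The second step reduces to a fixed countable family of sets. Let $\SC{A}$ be a countable algebra generating $\SC{B}(U)$ (for instance, finite unions of balls with rational radii centered at points of a countable dense set) and put $\SC{A}_0 = \{A \in \SC{A} : \mu(A)<\infty\}$. A routine monotone-class approximation shows that every $B\in\SC{U}$ with $\mu(B)<\infty$ satisfies, for each $\delta>0$, $\mu(B\triangle A)<\delta$ for some $A\in\SC{A}_0$; since $\|1_B - 1_A\|_\Phi = \|1_{B\triangle A}\|_\Phi \to 0$ as $\mu(B\triangle A)\to 0$ by the preliminary estimate, indicators from $\SC{A}_0$ approximate indicators of arbitrary finite-measure sets in $\|\cdot\|_\Phi$. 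Hence finite rational linear combinations of $\{1_A : A\in\SC{A}_0\}$ form a countable dense subset of $M^\Phi(U,\mu)$, proving separability. The one genuinely delicate point is the passage, in the first step, from $\int_U \Phi(\lambda|f-f_k|)\, d\mu \to 0$ (for each fixed $\lambda$) to $\|f-f_k\|_\Phi \to 0$: this is exactly where membership in $M^\Phi$ rather than merely in $L^\Phi$ is used, since a function in $L^\Phi\setminus M^\Phi$ need not be norm-approximable by simple functions. Everything else — the monotone-class approximation of sets, the behavior of $\|1_E\|_\Phi$, and rationalizing coefficients — is standard.
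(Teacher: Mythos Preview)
The paper does not supply its own proof of this lemma; it simply records the statement with a pointer to Rao and Ren (page 87). Your argument is a correct, self-contained proof in the setting the paper actually uses (a complete separable metric space $U$ with a $\sigma$-finite Borel measure $\mu$), and it follows the standard route one finds in that reference: density of simple functions in $M^\Phi$ --- this is exactly the step where membership in $M^\Phi$ rather than $L^\Phi$ is essential, as you note --- followed by approximation of finite-measure sets by a countable generating algebra.

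One small remark worth making explicit: the lemma as literally phrased (``$(U,\SC{U})$ a complete measure space'') is not true without some countable-generation or separability assumption on the underlying measure space; for instance $L^2$ of counting measure on an uncountable set is not separable. You correctly invoke the paper's standing hypotheses from the start of the Orlicz section to obtain a countably generated $\sigma$-algebra, and that is what makes Step~2 go through. Otherwise the argument is complete.
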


\subsection{Basis theory}
\label{Schauder}
The material presented here is taken from \cite{Heil97, Singer70, Singer81}.

\begin{definition}
 A sequence $\left\{x_k\right\}$ in a Banach space $B$ is a  {\bf basis} for $B$ if for each $x\in B$, there exist unique scalars $p_k(x)$, such that
\[x = \sum_k p_k(x) x_k.\]
\end{definition}

\begin{remark}
Every Banach space with a basis is separable.
\end{remark}

\begin{remark}
It is easy to see that the $p_n$ are linear functionals.
\end{remark}

\begin{definition}\label{Schau}
A basis $\left\{x_k\right\}$ is called a {\bf Schauder basis} if the unique $p_k$ are bounded linear functionals, that is if $p_k \in X^*$ for every $n$.
\end{definition}

\begin{theorem}
Every basis $\left\{x_k\right\}$ of $X$ is a Schauder basis, that is, the $p_k$ are bounded linear functionals.
\end{theorem}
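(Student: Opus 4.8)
The plan is to run the classical \emph{comparison-of-norms} argument and finish with the open mapping theorem. Write $S_n x = \sum_{k=1}^{n} p_k(x)\,x_k$ for the $n$-th partial-sum operator; since $\{x_k\}$ is a basis, $S_n x \to x$ in $X$ for every $x \in X$, so each sequence $(S_n x)_n$ is norm-bounded. Using this I would introduce the auxiliary norm
$$\triple| x \triple| \;=\; \sup_{n\ge 1}\,\|S_n x\|, \qquad x \in X.$$
The supremum is finite by the preceding observation, and the norm axioms are immediate: absolute homogeneity and the triangle inequality follow from those of $\|\cdot\|$ together with linearity of the $S_n$, while $\triple| x\triple| = 0$ forces $x = \lim_n S_n x = 0$. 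Moreover $\|x\| = \lim_n \|S_n x\| \le \triple| x\triple|$, so the formal identity map $\iota\colon (X,\triple|\cdot\triple|)\to (X,\|\cdot\|)$ is bounded, with norm at most $1$.

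The crux — and the step I expect to be the main obstacle — is to prove that $(X,\triple|\cdot\triple|)$ is complete. Let $(y^{(m)})_m$ be $\triple|\cdot\triple|$-Cauchy. For each fixed $k$ one has $\|p_k(y^{(m)}-y^{(l)})\,x_k\| = \|S_k(y^{(m)}-y^{(l)}) - S_{k-1}(y^{(m)}-y^{(l)})\| \le 2\,\triple| y^{(m)}-y^{(l)}\triple|$, so $(p_k(y^{(m)}))_m$ is a Cauchy sequence of scalars, say $p_k(y^{(m)}) \to c_k$. One then shows, via the standard $\varepsilon/3$ argument that juggles the double limit (in $m$ and in the number of summands), that the series $\sum_k c_k x_k$ converges in $X$ to a limit $y$, that $p_k(y) = c_k$ (so $y$ has the claimed expansion, by uniqueness of the basis coefficients), and that $\triple| y^{(m)} - y\triple| \to 0$. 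This uniform-Cauchy bookkeeping on the tails of the partial sums is where the real work lies.

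Granting completeness, $\iota$ is a continuous bijection between Banach spaces, so by the bounded inverse theorem there is a constant $C < \infty$ with $\triple| x\triple| \le C\|x\|$ for every $x \in X$; in particular the two norms are equivalent and $\sup_n \|S_n\| \le C$. Finally, for each index $k$ and each $x \in X$,
$$\|p_k(x)\,x_k\| \;=\; \|S_k x - S_{k-1} x\| \;\le\; 2\,\triple| x\triple| \;\le\; 2C\,\|x\|,$$
hence $|p_k(x)| \le (2C/\|x_k\|)\,\|x\|$, so each $p_k$ is a bounded linear functional and $\{x_k\}$ is a Schauder basis. (The least such $C$, equivalently $\sup_n\|S_n\|$, is the basis constant of $\{x_k\}$.)
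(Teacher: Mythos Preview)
The paper does not actually prove this theorem; it is quoted in the appendix as background material from the basis-theory references \cite{Heil97, Singer70, Singer81} and is stated without proof. Your argument is precisely the classical one found in those sources: introduce the norm $\triple| x\triple| = \sup_n\|S_n x\|$, show $(X,\triple|\cdot\triple|)$ is complete, apply the bounded inverse theorem to get a uniform bound on the partial-sum operators, and read off $\|p_k\|\le 2C/\|x_k\|$. The outline is correct and the completeness step you flag as ``the real work'' is indeed where the substance is; the $\varepsilon/3$ tail-swap you sketch is exactly what is needed there. Nothing to add.
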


\begin{example}
Every separable Orlicz space (hence $L^p$ space) has a Schauder basis. Every separable Hilbert space has a Schauder basis given by its complete orthonormal system.
\end{example}

\np
The notion of a basis of a Banach space is generalized to that of {\it pseudo-basis} defined below. 
\begin{definition}
\label{pseudo_def}
A sequence $\left\{x_k\right\}$ in a Banach space $B$ with $x_k \neq 0$ for $k=1,2,\hdots$ is a {\bf pseudo-basis} if for every $x \in B$, there exists a sequence of scalars $\left\{p_k\right\} $ such that
\begin{align}
\label{pseudo}
x = \sum_k p_k x_k.
\end{align}
\end{definition}

\begin{theorem} \cite[Theorem 5.1]{Singer81}
\label{hom}
Let $B$ be a separable Banach space.
\begin{enumerate}[label={\rm (\roman*)}, leftmargin=*, align=right]
\item Then $B$ has a pseudo-basis. 
\item \label{2} Every sequence $\left\{x_k\right\}$ with $x_k\neq 0, k=1,\hdots,$  which is dense in $\left\{x \in B: \|x\| \leq 1\right\}$ is a pseudo-basis of $B$. For every such sequence $\left\{x_k\right\}$, there exists a subset $\SC{L} $ of $l^1$ with the following property: for every $x \in B$ there exists a unique sequence of scalars $\left\{p_k(x)\right\}$ such that (\ref{pseudo}) is satisfied and the mapping
$$x \rt \left\{p_k(x)\right\}$$
is a homeomorphism of $E$ onto $\SC{L}$.
\end{enumerate}
\end{theorem}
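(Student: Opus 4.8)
The plan is to reduce everything to one fact --- the natural coordinate map from $\ell^1$ onto $B$ attached to the sequence $\{x_k\}$ is a bounded linear surjection --- and then to extract the homeomorphism from a continuous section of that map. So first I would set up the operator and prove surjectivity. Suppose $\{x_k\}$ has $x_k\neq 0$ and is dense in $\{x\in B:\|x\|\le 1\}$; such sequences exist because the closed unit ball of a separable Banach space is separable, and once part (ii) is proved part (i) follows. Define $T:\ell^1\to B$ by $T(\{a_k\})=\sum_k a_k x_k$. Since $\|x_k\|\le 1$ the series converges absolutely and $\|Ta\|_B\le\|a\|_{\ell^1}$, so $T$ is bounded and linear. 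To see that $T$ is onto, fix $x$ with $\|x\|\le 1$ and run a greedy iteration: set $y_1=x$, and having obtained $y_j$ with $\|y_j\|\le 1$, use density to choose an index $k_j$ with $\|y_j-x_{k_j}\|<\tfrac12$, then set $y_{j+1}=2(y_j-x_{k_j})$, so $\|y_{j+1}\|<1$. An easy induction gives $x-\sum_{i=1}^{j-1}2^{-(i-1)}x_{k_i}=2^{-(j-1)}y_j\to 0$, hence $x=\sum_{i=1}^{\infty}2^{-(i-1)}x_{k_i}$, and grouping the terms with a common index exhibits $x=T(a)$ with $a\in\ell^1$, $\|a\|_{\ell^1}\le 2$. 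Rescaling shows $T$ is surjective, and then $T$ is open by the open mapping theorem. This already gives part (i) and the first assertion of part (ii): every such dense sequence is a pseudo-basis of $B$, since each $x\in B$ now admits a representation $x=\sum_k p_k x_k$ with $\{p_k\}\in\ell^1$, the series converging absolutely.

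For the remaining assertion I would invoke the Bartle--Graves theorem: a continuous linear surjection between Banach spaces has a continuous (in general nonlinear) right inverse. Let $S:B\to\ell^1$ be such a map, $T\circ S=\mathrm{id}_B$; set $\SC{L}=S(B)\subseteq\ell^1$ and $p_k(x)=(S(x))_k$, each $p_k$ being continuous on $B$ as a composition of $S$ with a coordinate functional on $\ell^1$. Then $x=T(S(x))=\sum_k p_k(x)x_k$, so (\ref{pseudo}) holds, and $T|_{\SC{L}}$ and $S$ are mutually inverse bijections between $\SC{L}$ and $B$: if $a=S(y)\in\SC{L}$ then $T(a)=y$, so $S(T(a))=a$. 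Hence, for each $x\in B$ there is a unique sequence $\{p_k(x)\}\in\SC{L}$ representing $x$, namely $S(x)$, and the inverse map $x\mapsto\{p_k(x)\}=S(x)$ is continuous; combined with boundedness of $T$, this makes $x\mapsto\{p_k(x)\}$ a homeomorphism of $B$ onto $\SC{L}$.

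The norm estimates for $T$, the greedy iteration, and the verification that $T|_{\SC{L}}$ and $S$ are inverse to each other are all routine. The one substantial ingredient is the continuous section $S$, i.e. the Bartle--Graves theorem; a fully self-contained argument would reprove it by a Michael-type continuous-selection / partition-of-unity construction on the metric space $B$ --- take a locally finite open refinement of the cover of $B$ by images under $T$ of small open balls of $\ell^1$, pick one $\ell^1$-preimage over each member, and patch using a subordinate partition of unity. That selection step is the main obstacle; everything else is bookkeeping.
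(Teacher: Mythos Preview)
The paper does not supply a proof of this theorem; it merely records it as a result from Singer's monograph \cite{Singer81} on basis theory. So there is no ``paper's own proof'' to compare against, and your proposal stands on its own.

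Your argument is correct and is essentially the standard one. The greedy dyadic iteration showing that the coordinate map $T:\ell^1\to B$, $T(\{a_k\})=\sum_k a_k x_k$, is surjective onto the unit ball (with $\ell^1$-norm of the preimage at most $2$) is exactly the classical proof that any dense sequence in the unit ball is a pseudo-basis; combined with the open mapping theorem this yields (i) and the first half of (ii). For the homeomorphism, your appeal to the Bartle--Graves selection theorem is the right tool: a bounded linear surjection between Banach spaces admits a continuous (generally nonlinear) right inverse $S$, and setting $\SC{L}=S(B)$, $p_k(x)=(S(x))_k$ gives precisely the structure claimed --- the uniqueness of $\{p_k(x)\}$ being uniqueness \emph{within} $\SC{L}$, since $T$ typically has nontrivial kernel. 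Your identification of Bartle--Graves as the one nontrivial input, and the Michael-selection sketch for how one would reprove it, are both accurate. There is nothing to correct.
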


\begin{remark}{\rm
In partcular, the above theorem implies that for each $k$, the mapping $x \rt p_k(x)$ is continuous. If $\left\{x_k\right\}$ is a basis, then the $p_k$ are also linear, hence $p_k \in B^*, k = 1,2,\hdots$.}
\end{remark}

\np
{\bf Notation:} For convenience, we denote a basis or a pseudo-basis of $B$ by $\left\{(x_k, p_k)\right\}$.\\

\subsection{A compactness lemma}
\begin{lemma}
\label{cptconv}
Let $B$ be a separable Banach space with pseudo-basis $\left\{(x_k,p_k)\right\}$ satisfying \ref{2} of Theorem \ref{hom}. Define a sequence of  continuous functions $\left\{S_n\right\}$ by
$$S_n(x) = \sum_{k=1}^np_k(x)x_n.$$
Then $S_n \rt I$ uniformly on compacts, that is, for every compact set $C \subset B$
$$\sup_{x\in C} \|S_n(x) - x\| \rt 0.$$
\end{lemma}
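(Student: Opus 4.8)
The plan is to reduce the statement to a fact about compact subsets of $\ell^1$, using the two properties supplied by part~\ref{2} of Theorem~\ref{hom}: that the $x_k$ are dense in the closed unit ball of $B$ (so in particular $\|x_k\|\le 1$ for every $k$), and that the coefficient map $x \mapsto (p_k(x))_{k\ge 1}$ is a homeomorphism of $B$ onto a subset $\SC{L}\subset\ell^1$.

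First I would record the pointwise bound: since $x = \sum_k p_k(x)\,x_k$ converges in norm by the pseudo-basis property and $\|x_k\|\le 1$,
\[
\|S_n(x) - x\| = \Big\|\sum_{k>n} p_k(x)\, x_k\Big\| \le \sum_{k>n} |p_k(x)|
\qquad (x \in B).
\]
Then, given a compact $C\subset B$, the image $K$ of $C$ under the coefficient map is a compact subset of $\ell^1$ (continuous image of a compact set). Hence it suffices to show that
\[
\sup_{a\in K}\ \sum_{k>n} |a_k| \;\longrightarrow\; 0 \qquad\text{as } n\to\infty,
\]
because then $\sup_{x\in C}\|S_n(x) - x\| \le \sup_{a\in K}\sum_{k>n}|a_k| \to 0$, which is exactly the claim.

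The only nontrivial ingredient is this last tail estimate, i.e.\ the characterization of (relative) norm-compactness in $\ell^1$ as boundedness together with uniform decay of tails; this is the step I expect to carry the real weight. I would prove it by a finite $\epsilon$-net argument: cover $K$ by balls $B(a^{(1)},\epsilon/2),\dots,B(a^{(m)},\epsilon/2)$ with centers in $K$; choose $N$ with $\sum_{k>N}|a^{(i)}_k| < \epsilon/2$ for $i=1,\dots,m$ (possible since each $a^{(i)}\in\ell^1$); then for any $a\in K$ and $n\ge N$, picking $i$ with $\|a-a^{(i)}\|_{\ell^1}<\epsilon/2$, the triangle inequality gives $\sum_{k>n}|a_k|\le\|a-a^{(i)}\|_{\ell^1}+\sum_{k>n}|a^{(i)}_k|<\epsilon$. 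Everything else is routine bookkeeping with the homeomorphism from Theorem~\ref{hom}, so no further obstacle is anticipated.
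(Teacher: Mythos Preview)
Your proof is correct and follows essentially the same route as the paper: transfer $C$ to a compact subset of $\ell^1$ via the coefficient homeomorphism from Theorem~\ref{hom}\ref{2}, use $\|x_k\|\le 1$ to bound $\|S_n(x)-x\|$ by the $\ell^1$-tail $\sum_{k>n}|p_k(x)|$, and then extract uniform tail decay from compactness in $\ell^1$. The only cosmetic difference is in this last step: the paper packages it as an increasing open cover $O_n=\{c\in\ell^1:\sum_{j\ge n}|c_j|<\epsilon\}$ of the compact image and invokes the finite-subcover property, whereas you use a finite $\epsilon$-net; these are interchangeable standard arguments.
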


\begin{proof}
Let $C\subset B$ be compact. Let $T$ denote the mapping
$$x \in B \rt \left\{p_k(x)\right\} \in \SC{L}.$$
Note that by Theorem \ref{hom}, $T(C) \subset \SC{L}$ is also compact. Fix an $\e>0$. Define the  open sets $O_n \subset \SC{L}$ by
\begin{align}
\label{open}
O_n = \left\{\left\{c_k\right\} \in l^1: \sum_{j=n}^\infty |c_j| <\e\right\}.
\end{align}
Notice that the $O_n$ are increasing and $T(C)\subset \cup_{n}O_n$. Since $T(C)$ is compact, there exists an $N>0$, such that
$T(C) \subset \cup_{j=1}^NO_j = O_N.$ It follows using (\ref{2}) of Theorem \ref{hom} that if $n>N$, then $\|S_n(x) - x\| < \e$, for all $x\in C$.
\end{proof}

\begin{remark} {\rm If $B$ has a Schauder basis $\left\{(x_k,p_k)\right\}$, then the above conclusion holds as well. This can be seen by an application of Arzela-Ascoli theorem. The proof needs to be different because a Schauder basis  $\left\{(x_k,p_k)\right\}$ might not satisfy \ref{2} of Theorem \ref{hom}.} 
\end{remark}

\subsection{Integration with respect to vector-valued functions}
\label{vectint}
Suppose $\X$ is a  Banach space, and $x \in D_\X[0,\infty)$. Suppose $\y \in D_{\X^*_c}[0,\infty)$ is of finite variation in the sense that  $T_t(\y) < \infty$, for all $t>0$, where  the total variation $T_t(\y)$ is  defined as 
$$T_t (\y) = \sup_{\sigma} \sum_i \|\y(t_i) -\y(t_{i-1})\|_{\X^*},$$
 $\sigma\equiv\left\{t_i\right\}_i$ varying over all partitions of $[0,t)$.\\

\np
Define the integral $x\cdot \y$ by
\begin{align}
\label{intdef}
x\cdot\y(t)& = \lim_{\|\sigma\|\rt 0} \sum_i \<x(t_i), \y(t_{i+1}) -\y(t_i)\>_{\X,\X^*},
\end{align}
$\|\sigma\|$ denoting the mesh of the partition $\sigma\equiv\left\{t_i\right\}_i$. Here $\<h,h^*\>_{\X,\X^*} = h^*(h)$ for $h\in \X, h^* \in \X^*$.

\begin{lemma}
\label{intpf}
The limit in (\ref{intdef}) exists.
\end{lemma}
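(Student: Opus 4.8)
The plan is to show that the Riemann--Stieltjes-type sums $\sum_i \<x(t_i), \y(t_{i+1})-\y(t_i)\>$ form a Cauchy net as the mesh $\|\sigma\|\to 0$, exactly as in the scalar theory, the only new ingredient being that the increments of $\y$ now live in $\X^*$ and are paired against $x(t_i)\in\X$. The two structural facts we exploit are: (i) $x\in D_\X[0,\infty)$ is cadlag, hence bounded on $[0,t]$ and has only finitely many jumps of size exceeding any $\varepsilon>0$, so it is ``uniformly approximable by step functions'' in the sup-norm up to $\varepsilon$; and (ii) $\y$ has finite total variation $T_t(\y)<\infty$ on every $[0,t]$, so the $\X^*$-valued increments of $\y$ behave like a finite (vector) measure. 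We only need the limit for each fixed $t$; cadlag-ness of $t\mapsto x\cdot\y(t)$ is then standard and not part of this statement.

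First I would fix $t>0$ and $\varepsilon>0$, and choose a step function $x_\varepsilon=\sum_{j} x(s_j)\,1_{[s_j,s_{j+1})}$ (with the $s_j$ including all the finitely many large jump times of $x$ in $[0,t]$) such that $\sup_{s\le t}\|x(s)-x_\varepsilon(s)\|_\X\le\varepsilon$; this is possible precisely because $x$ is cadlag. For a step function the sum $\sum_i\<x_\varepsilon(t_i),\y(t_{i+1})-\y(t_i)\>$ is, for every partition $\sigma$ refining $\{s_j\}$, literally equal to the fixed quantity $\sum_j \<x(s_j),\y(s_{j+1})-\y(s_j)\>$, since the pairing is bilinear and the increments telescope on each block $[s_j,s_{j+1})$. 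Hence for step integrands the net is eventually constant, so trivially Cauchy. Next, for an arbitrary partition $\sigma=\{t_i\}$, I estimate the difference between the sum for $x$ and the sum for $x_\varepsilon$:
\[
\Big|\sum_i\<x(t_i)-x_\varepsilon(t_i),\y(t_{i+1})-\y(t_i)\>\Big|
\le \sum_i \|x(t_i)-x_\varepsilon(t_i)\|_\X\,\|\y(t_{i+1})-\y(t_i)\|_{\X^*}
\le \varepsilon\, T_t(\y),
\]
using the duality bound $|\<h,h^*\>|\le\|h\|_\X\|h^*\|_{\X^*}$ and the definition of total variation. Therefore, given two partitions $\sigma,\sigma'$, by refining both to a common refinement that also refines $\{s_j\}$ and applying the triangle inequality, the difference of their sums is at most $2\varepsilon\,T_t(\y)$ plus the (zero) difference of the step-function sums. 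Since $\varepsilon$ is arbitrary and $T_t(\y)<\infty$, the net of sums is Cauchy in $\R$, hence convergent; this is the limit (\ref{intdef}).

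The only mild subtlety — and the step I would be most careful about — is the bookkeeping that makes the step-function sums exactly telescope: one must insist that the approximating partition $\{s_j\}$ be a subset of the partitions $\sigma$ under consideration (work along a directed set of partitions refining $\{s_j\}$), and that the endpoints of the large jumps of $x$ are among the $s_j$, so that the sup-norm approximation $\sup_{s\le t}\|x(s)-x_\varepsilon(s)\|\le\varepsilon$ genuinely holds with $x_\varepsilon$ left-continuous-adapted to the grid. Once that is set up, everything else is the routine Cauchy-net argument above, and no Banach-space machinery beyond the duality inequality and completeness of $\R$ is needed. (If one wants the limit over \emph{all} partitions, not just those refining $\{s_j\}$, one notes that inserting the finitely many points $s_j$ into a given partition changes the sum by at most a term that also vanishes as $\|\sigma\|\to0$, by the same $\varepsilon T_t(\y)$ estimate applied on the affected subintervals; I would include this remark to be safe.)
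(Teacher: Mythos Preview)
Your proposal is correct and takes essentially the same approach as the paper: both arguments rest on the single estimate
\[
\Big|\text{(sum for one step approximation)} - \text{(sum for another)}\Big|\le \sup_{s\le t}\|\cdot\|_\X \cdot T_t(\y),
\]
after which the Cauchy property follows. The paper compresses this into three lines by writing $x^\sigma(t)=\sum_i x(t_i)1_{[t_i,t_{i+1})}(t)$ and bounding $|(x^\sigma-x^\delta)\cdot\y|\le \sup_{s\le t}\|x^\sigma(s)-x^\delta(s)\|_\X\,T_t(\y)$ for a refinement $\delta$; your version is more explicit about how the cadlag property is actually used to control that sup-norm, which the paper leaves implicit.
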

\begin{proof}
For  $\sigma\equiv\left\{t_i\right\}_i$, denote 
\[x^\sigma(t) = \sum_{i} x(t_i)1_{[t_i,t_{i+1})}(t).\]
And notice that for a finer partition $\delta$,
\[\sup_{r\leq t}|(x^\sigma-x^\delta)\cdot \y(r)| \leq \int_0^t\|x^\sigma(s)-x^\delta(s)\|_{\X}dT_s(\y) \leq \sup_{s\leq t}\|x^\sigma(s)-x^\delta(s)\|_{\X}T_t(\y).\]
It follows that $\left\{x^\sigma\cdot \y(t)\right\}$ is a Cauchy sequence and we are done.
\end{proof}

More generally, we can allow the integrands to take values in some operator space, so that the integral is infinite-dimensional. Let $\Y$ be a Banach space, and suppose that $x \in D_{L(\X^*,\Y)}[0,\infty)$. Define the integral $x\cdot \y$ by
\begin{align}
\label{intdef2}
x\cdot\y(t)& = \lim_{\|\sigma\|\rt 0} \sum_i x(t_i)\circ (\y(t_{i+1}) -\y(t_i)),
\end{align}
$\|\sigma\|$ denoting the mesh of the partition $\sigma\equiv\left\{t_i\right\}_i$. Here for $S \in L(\X^*,\Y) $ and $x \in \X^*$, $S \circ x  =S(x)$. The proof of the existence of the limit is same as that of Lemma \ref{intpf}. Notice that $x\cdot \y $ takes values in $\Y$. 
We end by noting that the above integrals are just  special (deterministic) cases of integrals with respect to $(\LL,\hat{\H})^\#$-semimartingales.

\vspace{1cm}
\np
\textbf{Acknowledgement:}
I am grateful to my advisor Prof. Tom Kurtz for his numerous advice and comments throughout the preparation of the paper.

\bibliographystyle{plainnat}
\bibliography{Grant}

\def\cprime{$'$}
\begin{thebibliography}{52}
\providecommand{\natexlab}[1]{#1}
\providecommand{\url}[1]{\texttt{#1}}
\expandafter\ifx\csname urlstyle\endcsname\relax
  \providecommand{\doi}[1]{doi: #1}\else
  \providecommand{\doi}{doi: \begingroup \urlstyle{rm}\Url}\fi

\bibitem[Azencott(1980)]{Ra80}
R.~Azencott.
\newblock Grandes d\'eviations et applications.
\newblock In \emph{Eighth {S}aint {F}lour {P}robability {S}ummer
  {S}chool---1978 ({S}aint {F}lour, 1978)}, volume 774 of \emph{Lecture Notes
  in Math.}, pages 1--176. Springer, Berlin, 1980.

\bibitem[Bessaih and Millet(2009)]{Bes09}
Hakima Bessaih and Annie Millet.
\newblock Large deviation principle and inviscid shell models.
\newblock \emph{Electron. J. Probab.}, 14:\penalty0 no. 89, 2551--2579, 2009.
\newblock ISSN 1083-6489.

\bibitem[Bou{\'e} and Dupuis(1998)]{BoDu98}
Michelle Bou{\'e} and Paul Dupuis.
\newblock A variational representation for certain functionals of {B}rownian
  motion.
\newblock \emph{Ann. Probab.}, 26\penalty0 (4):\penalty0 1641--1659, 1998.
\newblock ISSN 0091-1798.

\bibitem[Budhiraja et~al.(2008)Budhiraja, Dupuis, and Maroulas]{BDV08}
Amarjit Budhiraja, Paul Dupuis, and Vasileios Maroulas.
\newblock Large deviations for infinite dimensional stochastic dynamical
  systems.
\newblock \emph{Ann. Probab.}, 36\penalty0 (4):\penalty0 1390--1420, 2008.

\bibitem[Budhiraja et~al.(2010)Budhiraja, Dupuis, and Maroulas]{BDM10}
Amarjit Budhiraja, Paul Dupuis, and Vasileios Maroulas.
\newblock Large deviations for stochastic flows of diffeomorphisms.
\newblock \emph{Bernoulli}, 16\penalty0 (1):\penalty0 234--257, 2010.
\newblock ISSN 1350-7265.

\bibitem[Budhiraja et~al.(2011)Budhiraja, Dupuis, and Maroulas]{BDM11}
Amarjit Budhiraja, Paul Dupuis, and Vasileios Maroulas.
\newblock Variational representations for continuous time processes.
\newblock \emph{Ann. Inst. Henri Poincar\'e Probab. Stat.}, 47\penalty0
  (3):\penalty0 725--747, 2011.
\newblock ISSN 0246-0203.

\bibitem[Budhiraja et~al.(2013)Budhiraja, Chen, and Dupuis]{BCD13}
Amarjit Budhiraja, Jiang Chen, and Paul Dupuis.
\newblock Large deviations for stochastic partial differential equations driven
  by a {P}oisson random measure.
\newblock \emph{Stochastic Process. Appl.}, 123\penalty0 (2):\penalty0
  523--560, 2013.
\newblock ISSN 0304-4149.

\bibitem[Budhiraja et~al.(2016)Budhiraja, Dupuis, and Ganguly]{BDG14}
Amarjit Budhiraja, Paul Dupuis, and Arnab Ganguly.
\newblock Moderate deviation principles for stochastic differential equations
  with jumps.
\newblock \emph{Ann. Probab.}, 44\penalty0 (3):\penalty0 1723--1775, 2016.
\newblock ISSN 0091-1798.

\bibitem[Cho(2006)]{Cho06}
Nhansook Cho.
\newblock Large deviation principle for solutions to {SDE} driven by martingale
  measure.
\newblock \emph{Commun. Korean Math. Soc.}, 21\penalty0 (3):\penalty0 543--558,
  2006.

\bibitem[de~Acosta(1994)]{Acos94}
A.~de~Acosta.
\newblock Large deviations for vector-valued {L}\'evy processes.
\newblock \emph{Stochastic Process. Appl.}, 51\penalty0 (1):\penalty0 75--115,
  1994.

\bibitem[de~Acosta(1997)]{Acosta97}
A.~de~Acosta.
\newblock Exponential tightness and projective systems in large deviation
  theory.
\newblock In \emph{Festschrift for {L}ucien {L}e {C}am}, pages 143--156.
  Springer, New York, 1997.

\bibitem[Dembo and Zeitouni(1998)]{DZ98}
Amir Dembo and Ofer Zeitouni.
\newblock \emph{Large deviations techniques and applications}, volume~38 of
  \emph{Applications of Mathematics (New York)}.
\newblock Springer-Verlag, 1998.

\bibitem[Deuschel and dupuis(1989)]{DS89}
Jean-Dominique Deuschel and Daniel~W. dupuis.
\newblock \emph{Large deviations}, volume 137 of \emph{Pure and Applied
  Mathematics}.
\newblock Academic Press Inc., 1989.

\bibitem[Donsker and Varadhan(1975)]{DV75}
M.~D. Donsker and S.~R.~S. Varadhan.
\newblock Asymptotic evaluation of certain {M}arkov process expectations for
  large time. {I}. {II}.
\newblock \emph{Comm. Pure Appl. Math.}, 28:\penalty0 1--47; ibid. 28 (1975),
  279--301, 1975.

\bibitem[Donsker and Varadhan(1976)]{DV76}
M.~D. Donsker and S.~R.~S. Varadhan.
\newblock Asymptotic evaluation of certain {M}arkov process expectations for
  large time. {III}.
\newblock \emph{Comm. Pure Appl. Math.}, 29\penalty0 (4):\penalty0 389--461,
  1976.

\bibitem[Duan and Millet(2009)]{DuMi09}
Jinqiao Duan and Annie Millet.
\newblock Large deviations for the {B}oussinesq equations under random
  influences.
\newblock \emph{Stochastic Process. Appl.}, 119\penalty0 (6):\penalty0
  2052--2081, 2009.
\newblock ISSN 0304-4149.

\bibitem[Dupuis and Ellis(1997)]{DE97}
Paul Dupuis and Richard~S. Ellis.
\newblock \emph{A weak convergence approach to the theory of large deviations}.
\newblock Wiley Series in Probability and Statistics: Probability and
  Statistics. John Wiley \& Sons Inc., New York, 1997.
\newblock A Wiley-Interscience Publication.

\bibitem[Ethier and Kurtz(1986)]{EK86}
Stewart~N. Ethier and Thomas~G. Kurtz.
\newblock \emph{Markov processes}.
\newblock Wiley Series in Probability and Mathematical Statistics: Probability
  and Mathematical Statistics. John Wiley \& Sons Inc., New York, 1986.
\newblock ISBN 0-471-08186-8.
\newblock Characterization and convergence.

\bibitem[Feng and Kurtz(2006)]{FK06}
Jin Feng and Thomas~G. Kurtz.
\newblock \emph{Large deviations for stochastic processes}, volume 131 of
  \emph{Mathematical Surveys and Monographs}.
\newblock American Mathematical Society, Providence, RI, 2006.
\newblock ISBN 978-0-8218-4145-7; 0-8218-4145-9.

\bibitem[Fleming(1977/78)]{WF78}
Wendell~H. Fleming.
\newblock Exit probabilities and optimal stochastic control.
\newblock \emph{Appl. Math. Optim.}, 4\penalty0 (4):\penalty0 329--346,
  1977/78.

\bibitem[Fleming(1985)]{WF85}
Wendell~H. Fleming.
\newblock A stochastic control approach to some large deviations problems.
\newblock In \emph{Recent mathematical methods in dynamic programming ({R}ome,
  1984)}, volume 1119 of \emph{Lecture Notes in Math.}, pages 52--66. Springer,
  Berlin, 1985.

\bibitem[Freidlin and Wentzell(1998)]{FW98}
M.~I. Freidlin and A.~D. Wentzell.
\newblock \emph{Random perturbations of dynamical systems}, volume 260 of
  \emph{Grundlehren der Mathematischen Wissenschaften [Fundamental Principles
  of Mathematical Sciences]}.
\newblock Springer-Verlag, New York, second edition, 1998.
\newblock ISBN 0-387-98362-7.
\newblock Translated from the 1979 Russian original by Joseph Sz{\"u}cs.

\bibitem[Garcia(2004)]{Gar04}
J.~Garcia.
\newblock An extension of the contraction principle.
\newblock \emph{J. Theoret. Probab.}, 17\penalty0 (2):\penalty0 403--434, 2004.

\bibitem[Garcia(2008)]{Gar08}
Jorge Garcia.
\newblock A large deviation principle for stochastic integrals.
\newblock \emph{J. Theoret. Probab.}, 21\penalty0 (2):\penalty0 476--501, 2008.

\bibitem[Gihman and Skorohod(1979)]{GS79}
I.~I. Gihman and A.~V. Skorohod.
\newblock \emph{The theory of stochastic processes. {III}}.
\newblock Springer-Verlag, Berlin, 1979.
\newblock Translated from the Russian by Samuel Kotz, With an appendix
  containing corrections to Volumes I and II, Grundlehren der Mathematischen
  Wissenschaften, 232.

\bibitem[He et~al.(2011)He, Yin, and Zhang]{HYZ11}
Qi~He, George Yin, and Qing Zhang.
\newblock Large deviations for two-time-scale systems driven by nonhomogeneous
  {M}arkov chains and associated optimal control problems.
\newblock \emph{SIAM J. Control Optim.}, 49\penalty0 (4):\penalty0 1737--1765,
  2011.
\newblock ISSN 0363-0129.

\bibitem[Heil(1997)]{Heil97}
Christopher Heil.
\newblock Basis theory primer.
\newblock 1997.
\newblock URL \url{http://people.math.gatech.edu/~heil/papers/}.

\bibitem[Jakubowski et~al.(1989)Jakubowski, M{\'e}min, and Pag{\`e}s]{JMP89}
A.~Jakubowski, J.~M{\'e}min, and G.~Pag{\`e}s.
\newblock Convergence en loi des suites d'int\'egrales stochastiques sur
  l'espace {${\bf D}\sp 1$} de {S}korokhod.
\newblock \emph{Probab. Theory Related Fields}, 81\penalty0 (1):\penalty0
  111--137, 1989.

\bibitem[Kurtz and Protter(1991)]{KP91}
Thomas~G. Kurtz and Philip Protter.
\newblock Weak limit theorems for stochastic integrals and stochastic
  differential equations.
\newblock \emph{Ann. Probab.}, 19\penalty0 (3):\penalty0 1035--1070, 1991.
\newblock ISSN 0091-1798.

\bibitem[Kurtz and Protter(1996{\natexlab{a}})]{KP96_1}
Thomas~G. Kurtz and Philip~E. Protter.
\newblock Weak convergence of stochastic integrals and differential equations.
\newblock In \emph{Probabilistic models for nonlinear partial differential
  equations ({M}ontecatini {T}erme, 1995)}, volume 1627 of \emph{Lecture Notes
  in Math.}, pages 1--41. Springer, Berlin, 1996{\natexlab{a}}.

\bibitem[Kurtz and Protter(1996{\natexlab{b}})]{KP96_2}
Thomas~G. Kurtz and Philip~E. Protter.
\newblock Weak convergence of stochastic integrals and differential equations.
  {II}. {I}nfinite-dimensional case.
\newblock In \emph{Probabilistic models for nonlinear partial differential
  equations ({M}ontecatini {T}erme, 1995)}, volume 1627 of \emph{Lecture Notes
  in Math.}, pages 197--285. Springer, Berlin, 1996{\natexlab{b}}.

\bibitem[Liptser and Pukhalskii(1992)]{LipPuha92}
Robert~Sh. Liptser and Anatolii~A. Pukhalskii.
\newblock Limit theorems on large deviations for semimartingales.
\newblock \emph{Stochastics Stochastics Rep.}, 38\penalty0 (4):\penalty0
  201--249, 1992.
\newblock ISSN 1045-1129.

\bibitem[Liu(2010)]{Liu10}
Wei Liu.
\newblock Large deviations for stochastic evolution equations with small
  multiplicative noise.
\newblock \emph{Appl. Math. Optim.}, 61\penalty0 (1):\penalty0 27--56, 2010.
\newblock ISSN 0095-4616.

\bibitem[Manna et~al.(2009)Manna, Sritharan, and Sundar]{SunSri09}
U.~Manna, S.~S. Sritharan, and P.~Sundar.
\newblock Large deviations for the stochastic shell model of turbulence.
\newblock \emph{NoDEA Nonlinear Differential Equations Appl.}, 16\penalty0
  (4):\penalty0 493--521, 2009.
\newblock ISSN 1021-9722.

\bibitem[Mellouk and Millet(1998)]{MeMi98}
M.~Mellouk and A.~Millet.
\newblock Large deviations for stochastic flows and anticipating {SDE}s in
  {B}esov-{O}rlicz spaces.
\newblock \emph{Stochastics Stochastics Rep.}, 63\penalty0 (3-4):\penalty0
  267--302, 1998.
\newblock ISSN 1045-1129.

\bibitem[M{\'e}tivier and Pellaumail(1980)]{MP80}
Michel M{\'e}tivier and Jean Pellaumail.
\newblock \emph{Stochastic integration}.
\newblock Academic Press [Harcourt Brace Jovanovich Publishers], New York,
  1980.
\newblock ISBN 0-12-491450-0.
\newblock Probability and Mathematical Statistics.

\bibitem[Millet et~al.(1992)Millet, Nualart, and Sanz]{MiNuSa92}
A.~Millet, D.~Nualart, and M.~Sanz.
\newblock Large deviations for a class of anticipating stochastic differential
  equations.
\newblock \emph{Ann. Probab.}, 20\penalty0 (4):\penalty0 1902--1931, 1992.
\newblock ISSN 0091-1798.

\bibitem[O'Brien and Vervaat(1995)]{VB95}
George~L. O'Brien and Wim Vervaat.
\newblock Compactness in the theory of large deviations.
\newblock \emph{Stochastic Process. Appl.}, 57\penalty0 (1):\penalty0 1--10,
  1995.

\bibitem[Pukhalskii(1993)]{AP93}
A.~A. Pukhalskii.
\newblock On the theory of large deviations.
\newblock \emph{Teor. Veroyatnost. i Primenen.}, 38\penalty0 (3):\penalty0
  553--562, 1993.

\bibitem[Rao and Ren(1991)]{RR91}
M.~M. Rao and Z.~D. Ren.
\newblock \emph{Theory of {O}rlicz spaces}, volume 146 of \emph{Monographs and
  Textbooks in Pure and Applied Mathematics}.
\newblock Marcel Dekker Inc., New York, 1991.

\bibitem[R{\"o}ckner and Zhang(2007)]{RoZha07}
Michael R{\"o}ckner and Tusheng Zhang.
\newblock Stochastic evolution equations of jump type: existence, uniqueness
  and large deviation principles.
\newblock \emph{Potential Anal.}, 26\penalty0 (3):\penalty0 255--279, 2007.
\newblock ISSN 0926-2601.

\bibitem[R{\"o}ckner et~al.(2010)R{\"o}ckner, Zhang, and Zhang]{Rock10}
Michael R{\"o}ckner, Tusheng Zhang, and Xicheng Zhang.
\newblock Large deviations for stochastic tamed 3{D} {N}avier-{S}tokes
  equations.
\newblock \emph{Appl. Math. Optim.}, 61\penalty0 (2):\penalty0 267--285, 2010.
\newblock ISSN 0095-4616.

\bibitem[Ryan(2002)]{Rr02}
Raymond~A. Ryan.
\newblock \emph{Introduction to tensor products of {B}anach spaces}.
\newblock Springer Monographs in Mathematics. Springer-Verlag London Ltd.,
  London, 2002.

\bibitem[Singer(1970)]{Singer70}
Ivan Singer.
\newblock \emph{Bases in {B}anach spaces. {I}}.
\newblock Springer-Verlag, New York, 1970.
\newblock Die Grundlehren der mathematischen Wissenschaften, Band 154.

\bibitem[Singer(1981)]{Singer81}
Ivan Singer.
\newblock \emph{Bases in {B}anach spaces. {II}}.
\newblock Editura Academiei Republicii Socialiste Rom\^ania, Bucharest, 1981.
\newblock ISBN 3-540-10394-5.

\bibitem[Sritharan and Sundar(2006)]{Sun06}
S.~S. Sritharan and P.~Sundar.
\newblock Large deviations for the two-dimensional {N}avier-{S}tokes equations
  with multiplicative noise.
\newblock \emph{Stochastic Process. Appl.}, 116\penalty0 (11):\penalty0
  1636--1659, 2006.
\newblock ISSN 0304-4149.

\bibitem[Tao()]{Tao247A}
Terrence Tao.
\newblock Lecture notes on {F}ourier analysis.
\newblock URL \url{http://www.math.ucla.edu/~tao/247a.1.06f/}.

\bibitem[Ustunel(1982)]{Us82}
S.~Ustunel.
\newblock Stochastic integration on nuclear spaces and its applications.
\newblock \emph{Ann. Inst. H. Poincar\'e Sect. B (N.S.)}, 18\penalty0
  (2):\penalty0 165--200, 1982.

\bibitem[Varadhan(1966)]{Var66}
S.~R.~S. Varadhan.
\newblock Asymptotic probabilities and differential equations.
\newblock \emph{Comm. Pure Appl. Math.}, 19:\penalty0 261--286, 1966.

\bibitem[Walsh(1986)]{Walsh86}
John~B. Walsh.
\newblock An introduction to stochastic partial differential equations.
\newblock In \emph{\'{E}cole d'\'et\'e de probabilit\'es de {S}aint-{F}lour,
  {XIV}---1984}, volume 1180 of \emph{Lecture Notes in Math.}, pages 265--439.
  Springer, Berlin, 1986.

\bibitem[Yang and Hou(2008)]{YaHo08}
Desheng Yang and Zhenting Hou.
\newblock Large deviations for the stochastic derivative {G}inzburg-{L}andau
  equation with multiplicative noise.
\newblock \emph{Phys. D}, 237\penalty0 (1):\penalty0 82--91, 2008.
\newblock ISSN 0167-2789.

\bibitem[Yin and Zhu(2010)]{YinZhu10}
G.~George Yin and Chao Zhu.
\newblock \emph{Hybrid switching diffusions}, volume~63 of \emph{Stochastic
  Modelling and Applied Probability}.
\newblock Springer, New York, 2010.
\newblock ISBN 978-1-4419-1104-9.
\newblock Properties and applications.

\end{thebibliography}
\end{document}